\documentclass[11pt,leqno,a4paper]{article}
\usepackage{amsthm}
\usepackage{indentfirst}
\usepackage{amsmath}
\usepackage{mathrsfs}
\usepackage{txfonts}
\usepackage{amssymb,enumerate}
\usepackage{hyperref}
\usepackage{titlesec}
\usepackage{url}
\usepackage{rotating}
\usepackage{booktabs}
\usepackage{extarrows}

\usepackage{mathtools}


\renewcommand\thesubsection{\thesection.\Alph{subsection}}
\newcommand{\sbnal}{0.3em}   
\titleformat{\subsection}[runin]{\bfseries}{\thesubsection}{\sbnal}{}[]

\newtheorem{mainthm}{Theorem}
\newtheorem*{conj}{Conjecture}

\newtheorem{thm}{Theorem}[section]
\newtheorem{prop}[thm]{Proposition}

\newtheorem{lem}[thm]{Lemma}
\newtheorem{cor}[thm]{Corollary}
\newtheorem{rem}[thm]{Remark}

\newcommand{\GL}{\operatorname{GL}}

\newcommand{\PSL}{\operatorname{PSL}}
\newcommand{\PSU}{\operatorname{PSU}}
\newcommand{\CSp}{\operatorname{CSp}}
\newcommand{\Sp}{\operatorname{Sp}}
\newcommand{\PSp}{\operatorname{PSp}}
\newcommand{\SO}{\operatorname{SO}}
\newcommand{\I}{\operatorname{I}}
\newcommand{\J}{\operatorname{J}}
\newcommand{\D}{\operatorname{D}}
\newcommand{\Aut}{\operatorname{Aut}}

\newcommand{\Out}{\operatorname{Out}}
\newcommand{\Ind}{\operatorname{Ind}}
\newcommand{\Res}{\operatorname{Res}}
\newcommand{\Irr}{\operatorname{Irr}}
\newcommand{\IBr}{\operatorname{IBr}}
\newcommand{\Bl}{\operatorname{Bl}}
\newcommand{\Rad}{\operatorname{Rad}}
\newcommand{\dz}{\operatorname{dz}}
\newcommand{\Cl}{\operatorname{Cl}}
\newcommand{\Alp}{\operatorname{Alp}}
\newcommand{\diag}{\operatorname{diag}}

\newcommand{\bl}{\operatorname{bl}}
\newcommand{\rO}{\operatorname{O}}
\newcommand{\Ker}{\operatorname{Ker}}
\newcommand{\rk}{\operatorname{rk}}

\newcommand{\group}[1]{\langle#1\rangle}
\newcommand{\Group}[1]{\left\langle#1\right\rangle}
\newcommand{\set}[1]{\{#1\}}
\newcommand{\Set}[1]{\left\{#1\right\}}

\newcommand{\simi}{\!\!\sim\!\!}

\newcommand{\F}{\mathbb{F}}
\newcommand{\barF}{\overline{\mathbb{F}}}
\newcommand{\Z}{\mathbb{Z}}

\newcommand{\bG}{\mathbf{G}}
\newcommand{\tG}{\tilde{G}}
\newcommand{\tbG}{\tilde{\mathbf{G}}}
\newcommand{\hG}{\hat{G}}
\newcommand{\tbT}{\tilde{\mathbf{T}}}
\newcommand{\bV}{\mathbf{V}}

\newcommand{\tB}{\tilde{B}}
\newcommand{\tb}{\tilde{b}}
\newcommand{\cB}{\mathcal{B}}
\newcommand{\tcB}{\tilde{\mathcal{B}}}
\newcommand{\tD}{\tilde{D}}
\newcommand{\tR}{\tilde{R}}
\newcommand{\tC}{\tilde{C}}
\newcommand{\hC}{\hat{C}}
\newcommand{\tN}{\tilde{N}}
\newcommand{\hN}{\hat{N}}

\newcommand{\zero}{\mathbf{0}}
\newcommand{\bc}{\mathbf{c}}

\newcommand{\cF}{\mathcal{F}}
\newcommand{\cE}{\mathcal{E}}
\newcommand{\cR}{\mathcal{R}}
\newcommand{\cW}{\mathcal{W}}
\newcommand{\cK}{\mathcal{K}}
\newcommand{\cJ}{\mathcal{J}}
\newcommand{\fZ}{\mathfrak{Z}}
\newcommand{\sC}{\mathscr{C}}
\newcommand{\fS}{\mathfrak{S}}

\newcommand{\hE}{\hat{E}}
\newcommand{\hF}{\hat{F}}

\newcommand{\ts}{\tilde{s}}
\newcommand{\tz}{\tilde{z}}

\newcommand{\ttheta}{\tilde{\theta}}
\newcommand{\tvarphi}{\tilde{\varphi}}

\newcommand{\tchi}{\tilde{\chi}}
\newcommand{\tpsi}{\tilde{\psi}}
\newcommand{\tOmega}{\tilde{\Omega}}

\newcommand{\htheta}{\hat{\theta}}
\newcommand{\hpsi}{\hat{\psi}}

\begin{document}

\title{The inductive blockwise Alperin weight condition for $\PSp_{2n}(q)$ and odd primes
\footnote{Supported by National Natural Science Foundation of China (No. 11901478 and No. 11631001) and Fundamental Research Funds for the Central Universities (No. 2682019CX48).}}

\author{Conghui Li\\ \\
\small{Department of Mathematics, Southwest Jiaotong University,}\\
\small{Chengdu 611756, China.}\\
\small{Email:liconghui@swjtu.edu.cn}}

\maketitle

\begin{abstract}
In this paper, using a criterion given by J. Brough and B. Sp\"ath recently, we verify the inductive blockwise Alperin weight condition for the simple groups $\PSp_{2n}(q)$ and any odd prime $\ell$ not dividing $q$ under some assumptions concerning the decomposition matrices.
\newline \emph{2020 Mathematics Subject Classification:} 20C20, 20C33.
\newline \emph{Keyword:} Alperin weight conjecture, inductive condition, projective symplectic groups.
\end{abstract}


\section{Introduction}\label{sect:intro}

An important longstanding global-local conjecture, called the blockwise Alperin weight conjecture, is proposed by Alperin in \cite{Al87}.
The conjecture is stated as follows; see \S\ref{subsec:general} for the definition of weight.

\begin{conj}[Alperin]
Let $G$ be a finite group, $\ell$ a prime and $B$ an $\ell$-block of $G$.
Denote the set of all $G$-conjugacy classes of $B$-weights by $\Alp(B)$, then $|\Alp(B)|=|\IBr(B)|$.
\end{conj}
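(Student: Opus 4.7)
The plan is to approach the conjecture through the classification of finite simple groups and a reduction theorem. By the reduction theorem of Sp\"ath (building on Navarro--Tiep), the blockwise Alperin weight conjecture holds for an arbitrary finite group $G$ and prime $\ell$ provided that the inductive blockwise Alperin weight (iBAW) condition is verified for every non-abelian finite simple group $S$ and every prime $\ell$ dividing $|S|$. Thus the conjecture reduces to a family-by-family check over the finite simple groups, and the remaining task for each family is to produce sufficient structural data on Brauer characters and weights of the universal cover.

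For simple groups of Lie type---and in particular for $\PSp_{2n}(q)$, the focus of this paper---I would apply the recent criterion of Brough and Sp\"ath, which repackages the iBAW condition as an $\Aut(S)$-equivariance statement at the level of central extensions and stabilizers. Concretely, one produces for each $\ell$-block $B$ of a quasi-simple cover an $\Aut(S)_B$-equivariant bijection $\IBr(B)\leftrightarrow\Alp(B)$, together with compatible extensions of Brauer characters and weight characters to the respective inertia groups, refining the numerical identity $|\Alp(B)|=|\IBr(B)|$ into a canonical structural identification.

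The verification then proceeds in three stages. First, parameterize $\IBr(B)$ for each $\ell$-block $B$ of $\Sp_{2n}(q)$ by combining a regular embedding $\Sp_{2n}(q)\hookrightarrow\CSp_{2n}(q)$, modular Jordan decomposition, and the stated assumptions on the decomposition matrices; this lifts characters to the conformal group where the centralizers of semisimple $\ell'$-elements are connected. Second, classify the radical $\ell$-subgroups of $\Sp_{2n}(q)$ along Alperin--Fong lines (basic subgroups assembled from twisted wreath products) and compute their normalizers, thereby enumerating weights up to conjugacy. Third, match the two parameter sets by an explicit bijection and verify its equivariance under diagonal, field and graph automorphisms, then promote it to the extension-level compatibility demanded by the Brough--Sp\"ath criterion.

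The hard part will be step three: controlling both sides simultaneously under the full outer automorphism group. Diagonal automorphisms are governed by the regular embedding and the action on central characters, whereas field automorphisms interact subtly with the Jordan decomposition on the Brauer-character side and with the combinatorial Frobenius action on radical subgroups on the weight side. Producing extensions of Brauer characters to inertia groups over the central extension that are matched by genuine extensions of weight characters---so that the Brough--Sp\"ath criterion applies block-by-block---is the technical core of the argument, and it is precisely here that the decomposition-matrix assumptions announced in the abstract are invoked to pin down the relevant Brauer characters and their stabilizers.
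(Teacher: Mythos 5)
The statement you were asked to prove is labeled a \emph{Conjecture} in the paper, and the paper does not prove it (nor does anyone, as of this writing); the paper proves only Theorems~\ref{mainthm-1}--\ref{mainthm-3}, which establish the conjecture for the particular groups $\CSp_{2n}(q)$ and $\Sp_{2n}(2^f)$ and verify the inductive BAW condition for $\PSp_{2n}(q)$ under additional hypotheses. Your ``proof'' therefore cannot succeed as stated: what you describe is a \emph{strategy}, and a well-known one (the Navarro--Tiep / Sp\"ath reduction plus the Brough--Sp\"ath criterion), but invoking the reduction theorem only transforms the problem into the task of verifying the inductive condition for \emph{every} non-abelian finite simple group $S$ and every prime $\ell$ dividing $|S|$. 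You then carry out (a sketch of) that verification only for the single family $\PSp_{2n}(q)$, which is indeed what this paper does, but that leaves all other families of finite simple groups untouched, so the conjecture is not established.

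There is a second gap even within the scope you do address. Your stage-one parameterization ``combines \dots the stated assumptions on the decomposition matrices,'' and you later acknowledge that these assumptions ``are invoked to pin down the relevant Brauer characters.'' But unitriangularity of the decomposition matrix of $\Sp_{2n}(q)$ (and of $\CSp_{2n}(q)$) with respect to $\cE(G,\ell')$ is itself an open problem in general. So the argument is conditional twice over: first on the full classification-wide verification of the inductive condition, and second (for the type-C piece) on an unproven unitriangularity hypothesis, exactly as the paper's Theorem~\ref{mainthm-2} is stated. A genuine proof of the conjecture would have to close both gaps, and neither is within reach here. If you instead intended to prove one of the paper's actual theorems (say, Theorem~\ref{mainthm-1}, the BAW conjecture for $\CSp_{2n}(q)$), the correct route is the one the paper takes in \S\ref{sec:weights}: classify the weights of $\tG=\CSp_{2n}(q)$ using the Brauer-pair and block data of Fong--Srinivasan and An, organize them by the block labels $(\ts,\cK)$ of Table~\ref{tab:block}, and exhibit the explicit numerical bijection of Theorem~\ref{thm:bijection} with $\IBr(\tB_{\ts,\cK})$ via $e_\Gamma$-cores and quotients, rather than appealing to the reduction machinery, which is not needed for a direct count in a fixed group.
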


This conjecture has been verified for numerous groups, such as symmetric groups, many sporadic simple groups and many finite groups of Lie type.
Here we just mention the papers of Alperin-Fong \cite{AF90} and An \cite{An94}, which classify the weights of some classical groups for odd primes.

\paragraph{}
Since the general proof of this conjecture seems extremely difficult, an accessible way is to reduce it to simple groups.
First, Navarro and Tiep reduced the non-blockwise version of this conjecture to simple groups in \cite{NT11}, and then Sp\"ath reduced the blockswise version to simple groups in \cite{Sp13}.
Thus to prove this conjecture, it suffices to verify the inductive blockwise Alperin weight (BAW) condition defined in \cite{Sp13} for all simple groups.
This inductive condition has been verified for many cases.
In \cite{Sp13}, Sp\"ath gives a verification for finite simple groups of Lie type and the defining characteristic.
Malle verifies it in \cite{Ma14} for simple alternating groups, Suzuki groups and Ree groups.
All blocks with cyclic defect groups have been proved to satisfy this inductive condition by Koshitani and Sp\"ath in \cite{KS16a,KS16b}.
Schulte gives a verification for it for simple groups of type $G_2$ and $^3D_4$ in \cite{Sch16}.
A special case of type A is verified by Li-Zhang in \cite{LZ18,LZ19}.
Feng verifies it for the unipotent blocks of type A in \cite{Feng19}, and together with Z. Li and J. Zhang for unipotent blocks of classical groups and some other cases of classical groups in \cite{FLZ19}.
Some more particular simple groups of small rank are considered in \cite{SF14,FLL17a,BSF19,LL19}.

\paragraph{}
The inductive BAW condition is highly complicated, which makes it very challenging to verify it for simple groups of Lie type.
Roughly speaking, it consists of two parts: the first requires an equivariant bijection between irreducible Brauer characters and weights; the second is some requirements concerning the Clifford theory for characters under this bijection.
Fortunately, Brough and Sp\"ath prove a criterion for this inductive condition for simple groups with abelian outer automorphism groups, which makes it possible to consider the simple groups of Lie type B and C; see Theorem \ref{thm:criterion} for the statement.
We consider the simple group $\PSp_{2n}(q)$ and odd primes $\ell\nmid q$ in this paper.
First, assume $q$ is odd, in which case, according to the criterion, the main task is to establish the BAW conjecture itself for the conformal symplectic groups.

\begin{mainthm}\label{mainthm-1}
Let $p$ be an odd prime, $q=p^f$ and $\ell$ an odd prime different from $p$, then the blockwise Alperin weight conjecture holds for $\CSp_{2n}(q)$ and $\ell$.
\end{mainthm}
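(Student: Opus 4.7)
The plan is to derive the BAW conjecture for $\tG:=\CSp_{2n}(q)$ from the corresponding statement for $G:=\Sp_{2n}(q)$, which is already known from Alperin--Fong \cite{AF90} and An \cite{An94}, by analysing the extension along the chain $G\trianglelefteq GZ(\tG)\trianglelefteq\tG$. The inclusion $G\hookrightarrow\tG$ is a regular embedding on the algebraic-group level: $\tG/G$ is cyclic of order $q-1$ via the similitude character, $Z(\tG)$ is connected, $GZ(\tG)$ has index two in $\tG$, and $\tG/GZ(\tG)$ acts on $G$ by the non-trivial diagonal automorphism (of order $\gcd(2,q-1)=2$).

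Concretely, I would fix an $\ell$-block $\tb$ of $\tG$ and a block $B$ of $G$ covered by $\tb$. Between $G$ and $GZ(\tG)$ the extension is central cyclic, so every $\chi\in\IBr(B)$ extends in $|Z(\tG)/Z(G)|$ ways, with the extensions sorted into blocks of $GZ(\tG)$ by the central character on $Z(\tG)$. Between $GZ(\tG)$ and $\tG$ the extension has index two and is realised by a diagonal automorphism, whose action on irreducible Brauer characters is accessible through Jordan decomposition and the work of Cabanes--Enguehard and Bonnaf\'e. A parallel analysis applies to weights: for each $G$-weight $(R,\varphi)$ one has $N_{GZ(\tG)}(R)=N_G(R)Z(\tG)$, so $\varphi$ admits $|Z(\tG)/Z(G)|$ extensions sorted by central character, and the further step from $GZ(\tG)$ to $\tG$ is again an index-two diagonal extension. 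Clifford-theoretic counting then expresses $|\IBr(\tb)|$ and $|\Alp(\tb)|$ in terms of $\tG$-orbits and stabilisers on $\IBr(B)$ and on $\Alp(B)$.

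The main obstacle, and the technical heart of the argument, will be to check that these two $\tG$-actions --- on $\IBr(G)$ and on $\Alp(G)$ --- are compatible with An's BAW bijection for $G$, in the sense that corresponding orbits have matching stabilisers and induce matching central characters on $Z(\tG)$. On the character side the action is already computed through Jordan decomposition. On the weight side it must be computed directly on An's explicit parametrisation in terms of $\ell$-radical subgroups of symplectic type together with $e$-cores and $e$-quotients of partitions, and one must verify orbit-by-orbit that the resulting $\tG$-action matches the one on the character side. Once this compatibility is established, summing the known equality $|\IBr(B)|=|\Alp(B)|$ over $\tG$-orbits within each block yields $|\IBr(\tb)|=|\Alp(\tb)|$, which is the theorem.
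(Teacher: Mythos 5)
Your proposal takes a genuinely different route from the paper, and the route as sketched has a real gap. The paper does not descend from $\Sp$ to $\CSp$ by Clifford theory. It works directly on the conformal group: Fong--Srinivasan's classification of blocks of $\tG=\CSp_{2n}(q)$ by labels $(\ts,\cK)$ from \cite[\S11]{FS89} is combined with the Jordan decomposition and Geck's basic-set theorem to parametrize $\IBr(\tB_{\ts,\cK})$ (Lemma~\ref{lem:IBr}); the weights of $\tG$ itself are then classified by parallel combinatorial data (Proposition~\ref{prop:weights}); and the bijection (Theorem~\ref{thm:bijection}) falls out from matching $e$-cores and $e$-quotients. This is the natural side to work on, because $\tG$ has connected center --- so the Lusztig-series parametrization and the $\Irr(\tG/G)$-action are transparent (Remark~\ref{rem:action-tz}) --- and because \cite{FS89} already built the block theory for the conformal, not just the symplectic, group.

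The gap in your plan sits exactly at what you call ``the technical heart,'' but it is larger than you suggest, and your closing step is not correct as stated. Saying that ``summing the known equality $|\IBr(B)|=|\Alp(B)|$ over $\tG$-orbits within each block yields $|\IBr(\tb)|=|\Alp(\tb)|$'' conflates two different countings: for a fixed block $\tb$ of $\tG$ covering $B$, the Clifford correspondence identifies $\IBr(\tb)$ not with a set of $\tG$-orbits of $\IBr(B)$, but with pairs consisting of an orbit together with extension data; the number of characters lying over a given $\chi$ inside $\tb$ depends on the $\ell$-part and $\ell'$-part of the stabilizer $\tG_\chi/G$ and on which of the extensions to $\tG_\chi$ induce into $\tb$. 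Matching that extension data across An's bijection is precisely the kind of $\Irr(\tG/G)$-equivariance the paper establishes --- but on the $\tG$-side, as Lemma~\ref{lem:(3)of(iii)}, where it is accessible via Remark~\ref{rem:action-tz}. Replicating it on the $G$-side, where one must track degenerate symbols and disconnected centralizers by hand (this is essentially \cite{Li19} plus more), is harder, not easier. So your proposal is not structurally impossible, but the ``obstacle'' you flag is not a verification; it is work comparable in weight to Theorem~\ref{mainthm-1} together with a substantial part of Theorem~\ref{mainthm-2}, carried out on the less convenient side of the regular embedding.
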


The proof of the above theorem relies heavily on the results of Fong-Srinivasan in \cite{FS89} and An in \cite{An94}.
To continue the verification, we prove that a certain bijection between irreducible Brauer characters and weights of conformal symplectic groups satisfies the requirements of part (iii) in Theorem \ref{thm:criterion}.
The remaining requirements concerning the symplectic groups can be verified easily, using a result of Cabanes and Sp\"ath in \cite{CS17C} and a result in \cite{Li19} of the author.
In the process, to obtain information about irreducible Brauer characters, we need to make some assumptions about the decomposition matrices.
Thus the main result for $q$ odd is stated as follows.

\begin{mainthm}\label{mainthm-2}
Keep the assumptions in Theorem \ref{mainthm-1}.
Assume the decomposition matrices of $\Sp_{2n}(q)$ and $\CSp_{2n}(q)$ with respect to $\cE(\Sp_{2n}(q),\ell')$ and $\cE(\CSp_{2n}(q),\ell')$ are unitriangular.
Then the inductive blockwise Alperin weight condition holds for the simple group $\PSp_{2n}(q)$ and $\ell$.
\end{mainthm}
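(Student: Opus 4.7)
The plan is to apply the Brough--Sp\"ath criterion (Theorem~\ref{thm:criterion}), which is available since $\Out(\PSp_{2n}(q))$ is abelian for $q$ odd. Set $G=\Sp_{2n}(q)$ and $\tG=\CSp_{2n}(q)$, so that diagonal automorphisms of $G$ are realized by conjugation from $\tG$, while the outer automorphisms of $G$ modulo these form a cyclic group of field automorphisms. The criterion decomposes the inductive condition for $\PSp_{2n}(q)$ into (i) the blockwise Alperin weight conjecture for $\tG$ together with a bijection equivariant under field automorphisms, and (ii) stabiliser-matching and extendibility requirements relating weights and Brauer characters of $G$ with those of $\tG$ and of its extensions by field automorphisms.

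For~(i), I would take Theorem~\ref{mainthm-1} as input, which already yields a block-by-block bijection $\tOmega\colon\IBr(\tB)\to\Alp(\tB)$; the remaining task is to refine this bijection so that it commutes with field automorphisms. Here the unitriangularity hypothesis on the decomposition matrix of $\tG$ with respect to $\cE(\tG,\ell')$ plays the role of a basic-set identification: it canonically labels $\IBr(\tG)$ by the $\ell'$-semisimple data parametrising $\cE(\tG,\ell')$, which reduces the field-automorphism action on $\IBr(\tG)$ to the known action on $\cE(\tG,\ell')$. This action can then be matched against the corresponding action on the Alperin--Fong/An combinatorial labels of weights, using the explicit combinatorics from \cite{FS89,An94} that underlies the proof of Theorem~\ref{mainthm-1}.

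For~(ii), an equivariant bijection on $G$ is produced from $\tOmega$ by covering Brauer characters and weights along $G\trianglelefteq\tG$, with the unitriangularity for $G$ controlling the Brauer side. The residual stabiliser identifications and the extendibility of Brauer characters and of weights to their inertia subgroups inside $G\rtimes\Out(G)_{B}$ are then supplied by the character-side result of Cabanes--Sp\"ath \cite{CS17C} and the weight-side result in \cite{Li19}, both of which are tailored for exactly this symplectic setting.

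The main obstacle I anticipate is the refinement of $\tOmega$ in step~(i) to a field-automorphism equivariant bijection: it requires a careful reading of the action of field automorphisms on the Jordan-decomposition labels of the Brauer characters of $\tG$ and on the combinatorial labels of the $\tG$-weights, and the verification that the two actions agree block by block. Once equivariance is secured at the $\tG$-level, the descent to $G$ and the check of the Clifford-theoretic conditions should proceed along well-established lines.
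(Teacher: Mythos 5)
Your overall approach matches the paper's: apply the Brough--Sp\"ath criterion with $G=\Sp_{2n}(q)$, $\tG=\CSp_{2n}(q)$, take the block-by-block bijection from Theorem~\ref{mainthm-1}, use the unitriangularity hypotheses to transfer the known actions on basic sets to Brauer characters, and invoke \cite{CS17C} and \cite{Li19} for the Clifford-theoretic conditions (iii) and (iv). That is exactly the shape of the paper's Section~\ref{sec:inductive}.

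However, there is a genuine gap in your treatment of condition~(ii) of Theorem~\ref{thm:criterion}: the required equivariance is under $\IBr(\tG/G)\rtimes E_{\cB}$, and you only discuss equivariance under the field automorphisms $E$. The $\IBr(\tG/G)$-equivariance is far from automatic. The action of $\IBr(\tG/G)$ on $\IBr(\tG)$ corresponds, via duality, to multiplication by $Z(\tG^*)$ on Jordan-decomposition labels (Remark~\ref{rem:action-tz}), and one must track how this multiplication interacts with the weight labels $(\ts,\cK,K)$ or $(\ts,\cK,\{K,K'\})$ or $(\ts,\cK,(K,i))$ through the construction in Proposition~\ref{prop:weights}. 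The paper devotes an entire case-by-case argument (Lemma~\ref{lem:(3)of(iii)}) to this, where cases (I)--(VI) of Table~\ref{tab:block} behave quite differently: in cases (IV) and (VI) the index $|N(\theta_+):N(\ttheta)|=2$ forces a careful matching of the ``orbit flip'' on the symbol side with the action on the pair $(K,i)$, and the conventions of Remark~\ref{rem:cc-cano-chars} and Remark~\ref{rem:ts0} are needed precisely to make the bookkeeping work out. Without this verification, condition~(ii) is not established.

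A secondary but noteworthy imprecision: conditions~(iii) and~(iv) of the criterion do not ask you to ``produce an equivariant bijection on $G$'' by covering along $G\trianglelefteq\tG$. They ask for the existence, in each fibre over a Brauer character of $\tG$ (respectively over a weight of $G$), of an element with compatible $\tG E$-stabiliser and an extension to $G\rtimes E_{\chi_0}$ (respectively $(G\rtimes E^x)_{R,\psi_0}$). These are Clifford-theoretic statements about individual objects, not a bijection. Your references \cite{CS17C} for the character side and \cite{Li19} for the weight side are the correct inputs, but the argument is the verification of these stabiliser and extendibility properties (in the paper, via passage to twisted groups as in \cite{CS17}), not the construction of a $G$-level bijection.
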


Here, recall that $\cE(G,\ell') = \bigcup_{s\in G^*_{ss,\ell'}} \cE(G,(s))$ for a finite group $G$ of Lie type.

\paragraph{}
Next, we consider the relatively easy case for $q=2^f$.
\begin{mainthm}\label{mainthm-3}
Let $q=2^f$ and $\ell$ be an odd prime, then the blockwise Alperin weight conjecture holds for $\Sp_{2n}(2^f)$ and $\ell$.
Furthermore, assume $n\geq2$ and $(n,f)\neq(2,1)$ or $(3,1)$, then if the decomposition matrix of $\Sp_{2n}(2^f)$ to $\cE(\Sp_{2n}(2^f),\ell')$ is unitriangular, the inductive blockwise Alperin weight condition holds for $\Sp_{2n}(2^f)$ and $\ell$.
\end{mainthm}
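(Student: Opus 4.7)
For the first statement, the plan is to follow the strategy of Theorem~\ref{mainthm-1}, but applied directly to $\Sp_{2n}(2^f)$ rather than to a conformal cover. Since for $q=2^f$ the center $Z(\Sp_{2n}(q))$ is trivial and the conformal group splits as a direct product $\CSp_{2n}(q)=Z(\GL_{2n}(q))\times\Sp_{2n}(q)$, the representation-theoretic picture is considerably simpler than in the odd-$q$ case: there are no diagonal outer automorphisms and no central-character bookkeeping. Using the Jordan decomposition of blocks, the $\ell$-blocks of $\Sp_{2n}(q)$ are parametrized by pairs $(s,\kappa)$ with $s$ a semisimple $\ell'$-element of the dual group and $\kappa$ a unipotent block of $C_{G^*}(s)^F$. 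I would then read off $|\IBr(B)|$ from the Fong--Srinivasan analysis in~\cite{FS89} and $|\Alp(B)|$ from the weight construction of An in~\cite{An94}, both of which are insensitive to the parity of $q$ at the combinatorial level, since the indexing is in terms of symbols and partitions. A block-by-block comparison yields the equality.

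For the inductive condition the plan is to invoke the Brough--Sp\"ath criterion stated as Theorem~\ref{thm:criterion}. The excluded pairs $(n,f)=(2,1),(3,1)$ are exactly the groups $\Sp_4(2)$ and $\Sp_6(2)$, which have exceptional Schur multiplier; for all remaining admissible $(n,f)$ the group $\Sp_{2n}(2^f)$ is its own universal central extension and $\Out(\Sp_{2n}(2^f))=\langle F_0\rangle$ is cyclic of order $f$, hence abelian, so the criterion is applicable. Its three clauses must then be verified in turn.

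Clause~(i) asks for an $\langle F_0\rangle$-equivariant bijection between $\IBr(B)$ and $\Alp(B)$ for each block~$B$. I would construct it by using the assumed unitriangularity of the decomposition matrix of $\Sp_{2n}(q)$ with respect to $\cE(\Sp_{2n}(q),\ell')$ to attach to each irreducible Brauer character a canonical label $(s,\lambda)$ arising, via Jordan decomposition, from an ordinary character in $\cE(\Sp_{2n}(q),\ell')$, and then matching it with the weight carrying the same label in An's parametrization from the proof of the first statement. Equivariance under $F_0$ then reduces to the evident functoriality of Jordan decomposition and of An's weight construction, both of which commute with field automorphisms at the combinatorial level.

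Clauses~(ii)--(iii), concerning extensions to stabilizers and compatibility of Clifford data, are the expected main obstacle. The cyclicity of $\langle F_0\rangle$ guarantees that any Brauer or weight character stable under a subgroup of $\langle F_0\rangle$ extends to that subgroup, so the difficulty is not existence but a \emph{compatible} choice of extensions on the two sides. I would adapt the techniques used in the analogous step of Theorem~\ref{mainthm-2}: invoking the extension results for irreducible Brauer characters from~\cite{CS17C} and~\cite{Li19} on the $\IBr$ side, and constructing explicit extensions of the weight characters on the $\Alp$ side. The crux is obtaining a description of the $F_0$-action on the weight characters explicit enough to match the well-understood $F_0$-action on the Brauer side; once this is done, the absence of diagonal and graph contributions to $\Out$ makes the remainder of the verification substantially shorter than in the $\PSp_{2n}(q)$ setting.
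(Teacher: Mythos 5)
Your overall route matches the paper's: parametrize the $\ell$-blocks of $\Sp_{2n}(2^f)$ by labels $(s,\kappa)$, parametrize $\IBr(B)$ and $\Alp(B)$ by symbols and partitions, compare cardinalities to establish the blockwise conjecture, and then feed the resulting equivariant bijection into the Brough--Sp\"ath criterion, taking $\tG = G$ and $\cB = \{B\}$. The key observation that for $q=2^f$ there are no diagonal outer automorphisms and the work is therefore internal to $G$ is also the paper's.

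However, you significantly misjudge where the work lies, and the misjudgment stems from a misreading of Theorem~\ref{thm:criterion}. You describe the extension clauses as ``the expected main obstacle'' because one must make ``a \emph{compatible} choice of extensions on the two sides.'' That is not what Theorem~\ref{thm:criterion} asks for: clauses (iii) and (iv) are \emph{independent} extension statements on the Brauer side and on the weight side respectively, with no coherence condition linking them --- that decoupling is precisely the point of the criterion. When $\tG = G$ and $E = \langle F_p\rangle$ is cyclic, both clauses collapse to triviality: $(\tG E)_{\chi_0} = G_{\chi_0}\rtimes E_{\chi_0}$ holds tautologically, and cyclicity of $E$ gives extendibility for free; likewise for $(R,\psi_0)$. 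The paper dispatches all of this, together with clause (i), in one sentence. The genuine content --- and the only place the unitriangularity hypothesis enters --- is the equivariance of the bijection under field automorphisms (clause (ii) of the criterion, which you label ``Clause (i)''). There the paper argues as in Lemma~\ref{lem:equi}: via \cite[Theorem 3.1]{CS13} one has $\chi_{s,\lambda}^\sigma = \chi_{\sigma^*(s),\sigma^*(\lambda)}$, the unitriangularity transports this to $\phi_{s,\lambda}^\sigma = \phi_{\sigma^*(s),\sigma^*(\lambda)}$, and the matching $F_p$-action on weight labels comes from the explicit twisted-group computations of \cite[\S6.C]{Li19}. Your phrase ``evident functoriality'' glosses over exactly the step where the hypothesis on the decomposition matrix is used.

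Two smaller omissions: you do not address the modified choice of the polynomial family $\cF_0 = \{X-1\}$ for even $q$ (the paper replaces the odd-$q$ split $X\pm 1$ by a single factor, which changes the combinatorics of symbols and cores and must be in place before the block-label $(s,\kappa)$ and the bijection $\lambda_\Gamma = \kappa_\Gamma * Q_\Gamma$ make sense); and you do not note the isomorphism $G\cong G^*$ in even characteristic, which is used to transfer semisimple-element data (dual defect groups, etc.) between the two sides. These are not hard to supply, but they are load-bearing conventions without which the block-by-block comparison you propose for the first statement would not typecheck.
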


The structure of this paper is as follows.
In \S\ref{sec:pre}, we give some notation and preliminaries.
Then in sections \S\ref{sec:prepare}, \S\ref{sec:weights}, \S\ref{sec:inductive}, we consider the case when $q$ is odd.
First, we consider the Brauer pairs associated to radical subgroups and recall the results about blocks of conformal symplectic groups in \S\ref{sec:prepare}.
In \S\ref{sec:weights}, we classify the weights of conformal symplectic groups and prove Theorem \ref{mainthm-1}.
In \S\ref{sec:inductive}, we verify the inductive BAW condition for the cases in Theorem \ref{mainthm-2}.
Finally, in \S\ref{sec:main-3}, we consider the case when $q$ is even.


\section{Preliminaries}\label{sec:pre}

\subsection{}\label{subsec:general}
For an element $g$ of a finite group $G$, we denote by $\Cl_G(g)$ its $G$-conjugacy class.
The notation for representations of finite groups in this paper is standard,
which can be found for example in \cite{NT89} except that
we use $\Ind$ and $\Res$ for induction and restriction and
use $\chi^0$ for the restriction of an ordinary character $\chi$ of $G$ to the $\ell$-regular elements.
We will consider the modular representations with respect to a fixed prime $\ell$, thus we will abbreviate $\ell$-Brauer characters, $\ell$-block, etc. as Brauer characters, blocks, etc.

For a finite group $G$, we denote by $\dz(G)$ the set of all irreducible defect zero characters of $G$.
For $K \unlhd G$ and $\theta\in\Irr(K)$, we set $\dz(G \mid \theta) = \dz(G) \cap \Irr(G \mid \theta)$.
For an $\ell$-subgroup $R$ of $G$ and a block $B$ of $G$, $\dz(N_G(R)/R,B)$ denotes the set of all characters $\varphi$ of $N_G(R)$ which lift characters in $\dz(N_G(R)/R)$ and satisfy $\bl(\varphi)^G=B$.
Here, $\bl(\varphi)$ is the block of $N_G(R)$ containing $\varphi$.
Here and often in the sequel, we will identify characters of $N_G(R)/R$ and their lifts to $N_G(R)$.

An $\ell$-weight of $G$ means a pair $(R,\varphi)$ with $\varphi\in\dz(N_G(R)/R)$.
In this case, $R$ is necessarily an $\ell$-radical subgroup of $G$, \emph{i.e.} $R=\rO_\ell(N_G(R))$, and $\varphi$ is called a weight character.
If furthermore $\varphi\in\dz(N_G(R)/R,B)$, then $(R,\varphi)$ is called a $B$-weight.
Denote the set of all $B$-weights by $\Alp^0(B)$ and the set of all $G$-conjugacy classes of $B$-weights by $\Alp(B)$.
For $(R,\varphi) \in \Alp^0(B)$, denote by $\overline{(R,\varphi)}$ the $G$-conjugacy class of $(R,\varphi)$.

Recently, J. Brough and B. Sp\"ath gave a new criterion for the inductive condition particularly suitable for simple groups of type B and C.
Denote by $\Rad(G)$ the set of all radical subgroups of $G$ and by $\Rad(G)/\simi G$ a $G$-transversal of $\Rad(G)$.

\begin{thm}[{\cite[Theorem]{BS19}}]\label{thm:criterion}
Let $S$ be a finite non-abelian simple group and $\ell$ a prime dividing $|S|$.
Let $G$ be an $\ell'$-covering group of $S$ and assume there are groups $\tG$, $E$ such that $G \unlhd \tG \times E$.
Assume $\cB\subseteq\Bl(G)$ is a $\tG$-stable set satisfying $(\tG E)_B\leq(\tG E)_\cB$ for any $B\in\cB$ and the following hold:
\begin{enumerate}[(i)]\setlength{\itemsep}{0pt}
\item \begin{itemize}\setlength{\itemsep}{0pt}
	 \item $G=[\tG,\tG]$ and $E$ is abelian,
	 \item $C_{\tG E}(G)=Z(\tG)$ and $\tG E/Z(\tG) \cong \Aut(G)$ by the natural map,
	 \item any element of $\IBr(\cB)$ extends to its stabilizer in $\tG$,
	 \item for any $R \in \Rad(G)$, any element of $\dz(N_G(R)/R,B)$ with $B\in\cB$ extends to its stabilizer in $N_{\tG}(R)/R$.
	 \end{itemize}
\item Let $\tcB=\Bl(\tG\mid\cB)$.
	There exists an $\IBr(\tG/G) \rtimes E_{\cB}$-equivariant bijection
	\[\tOmega_{\tcB}:\quad \IBr(\tcB) \to \Alp(\tcB)\]
	with $\tOmega_{\tcB}(\IBr(\tB)) = \Alp(\tB)$ for every $\tB \in \tcB$
	and $\J_G(\tpsi) = \J_G(\tOmega_{\tcB}(\tpsi))$ for every $\tpsi \in \IBr(\tcB)$.
\item For every $\tchi\in\IBr(\tG)$, there exists some $\chi_0\in\IBr(G\mid\tchi)$ such that
	\begin{itemize}\setlength{\itemsep}{0pt}
	\item $(\tG\rtimes E)_{\chi_0}=\tG_{\chi_0}\rtimes E_{\chi_0}$,
	\item $\chi_0$ extends to $G\rtimes E_{\chi_0}$.
	\end{itemize}
\item For every $\overline{(R,\psi_0)} \in \Alp(\cB)$, there exists some $x \in \tG$ with
	\begin{itemize}\setlength{\itemsep}{0pt}
	\item $(\tG E)_{R,\psi_0}= \tG_{R,\psi_0} (GE^x)_{R,\psi_0}$,
	\item $\psi_0$ extends to $(G\rtimes E^x)_{R,\psi_0}$.
	\end{itemize}
\end{enumerate}
Then the inductive blockwise Alperin weight condition holds for any $B$ in $\cB$ with abelian $\Out(G)_{\tG\textrm{-orbit of }B}$.
\end{thm}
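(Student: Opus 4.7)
The plan is to derive the inductive blockwise Alperin weight (BAW) condition of \cite{Sp13} from the four hypotheses by Clifford descent from $\tG$ to $G$, with (iii) and (iv) supplying the Clifford-theoretic data. Fix $B\in\cB$ whose $\tG$-orbit has abelian $\Out(G)$-stabilizer. Recall that the inductive BAW condition demands an $\Aut(G)_B$-equivariant bijection $\Omega_B\colon\IBr(B)\to\Alp(B)$, together with, for each $\chi\in\IBr(B)$, a good pair $(\chi_0,\psi_0)$ of lifts into $G\rtimes E$ whose stabilizers in $\tG\rtimes E$ factor as semidirect products of the form $\tG_{\chi_0}\rtimes E_{\chi_0}$ and which extend to those stabilizers.

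I would construct $\Omega_B$ by Clifford descent. Given $\chi\in\IBr(B)$, choose $\tB\in\Bl(\tG\mid B)$ and $\tchi\in\IBr(\tB\mid\chi)$; hypothesis (ii) produces $\tOmega_{\tcB}(\tchi)=\overline{(R,\tpsi)}$ with $\J_G(\tchi)=\J_G(\tpsi)$. One then sets $\Omega_B(\chi)$ to be the $G$-class of an $\ell$-weight $(R,\psi_0)$ lying under $(R,\tpsi)$ and matched with $\chi$ via the common $\J_G$-data. Well-definedness and bijectivity rest on the extension hypotheses in the third and fourth bullets of (i): via the standard Clifford correspondence they force $\tG/G$ to permute the $G$-characters under $\tchi$ and the $G$-weights under $\tpsi$ in the same way, giving a coherent quotient bijection. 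Equivariance under $\Aut(G)_B$ is then inherited from the $\IBr(\tG/G)\rtimes E_{\cB}$-equivariance of $\tOmega_{\tcB}$ together with the identifications $C_{\tG E}(G)=Z(\tG)$ and $\tG E/Z(\tG)\cong\Aut(G)$ from (i).

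Next I would assemble the Clifford data. Hypothesis (iii) supplies $\chi_0\in\IBr(G\mid\tchi)$ with $(\tG\rtimes E)_{\chi_0}=\tG_{\chi_0}\rtimes E_{\chi_0}$ and an extension of $\chi_0$ to $G\rtimes E_{\chi_0}$; hypothesis (iv) supplies the analogous $(\tG E)_{R,\psi_0}=\tG_{R,\psi_0}(GE^x)_{R,\psi_0}$ together with an extension of $\psi_0$ to $(G\rtimes E^x)_{R,\psi_0}$. The linchpin is to check that these two pieces of Clifford data are \emph{compatible} under the bijection $\Omega_B$, that is, that the images of $\tG_{\chi_0}$ and $\tG_{R,\psi_0}$ in $\Out(G)$ agree. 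This is what the $\J_G$-matching in (ii) encodes, and because $\Out(G)_{\tG\textrm{-orbit of }B}$ is abelian it lifts to equality of the relevant $E$-factors after possibly replacing $E$ by its conjugate $E^x$ as in (iv); standard block-induction arguments (using $(\tG E)_B\leq(\tG E)_{\cB}$) then transfer this information down to the block $B$.

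The principal obstacle is exactly this joint matching. A priori (iii) and (iv) are independent existence statements, each producing \emph{some} $\chi_0$ or $\psi_0$, and one must pin them down so that they sit on the same side of the bijection $\Omega_B$ and produce the same subgroup of $\Out(G)$ as stabilizer. The abelian-orbit hypothesis is indispensable here, since otherwise one cannot predict the intersection behavior of stabilizers inside $E$ and the semidirect-product factorizations required by the inductive condition may fail. Once this matching is arranged, the residual verifications concerning block correspondence, defect groups, and the central character condition on $Z(G)\cap[G,G]=Z(G)$ are routine consequences of Brauer's First Main Theorem and standard Clifford theory for blocks.
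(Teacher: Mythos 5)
This statement is not proved in the paper at all: it is Theorem~\ref{thm:criterion}, quoted verbatim as \cite[Theorem]{BS19}, a result of Brough and Sp\"ath that was in preparation at the time. The paper states it as an external criterion and then spends Sections~\ref{sec:prepare}--\ref{sec:main-3} verifying its hypotheses (i)--(iv) for $\PSp_{2n}(q)$, but it never proves the criterion itself. So there is no ``paper's own proof'' against which to compare your proposal, and you should not expect to find one; the honest citation is to the Brough--Sp\"ath paper.

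As a sketch of what a proof would have to do, your outline captures the right high-level flavor --- Clifford descent from $\tG$ to $G$, using (ii) to transport the $\tG$-level bijection and (iii)--(iv) to supply the Clifford-theoretic extension data --- but it stays far short of a proof. The inductive BAW condition of \cite{Sp13} is not merely the existence of an $\Aut(G)_B$-equivariant bijection together with extendibility statements; it requires an equality of block-theoretic Clifford data (a precise cocycle/central-character matching over $Z(\tG E/G)$ relative to a chosen lift of the universal covering group), and none of that appears in your outline. You also invoke ``$\J_G$'' without unpacking it, yet the whole point of the hypothesis $\J_G(\tpsi)=\J_G(\tOmega_{\tcB}(\tpsi))$ is to control the $\tG$-stabilizers on both sides so that the descended Clifford data agree; treating it as a black box leaves exactly the step you yourself identify as ``the principal obstacle'' unaddressed. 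Finally, the roles of the fourth bullet of (i) (extendibility of weight characters into $N_{\tG}(R)/R$) and of the replacement of $E$ by a $\tG$-conjugate $E^x$ in (iv) are genuinely delicate in the Brough--Sp\"ath argument; waving at ``standard block-induction arguments'' does not substitute for checking that the two semidirect-product factorizations in (iii) and (iv) are simultaneously realizable over the \emph{same} complement. In short, the route you sketch is plausible and aligned with what the cited reference does, but it is a strategy statement, not a proof, and the paper itself offers no proof to compare it to.
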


For the definition of $\J_G(\tpsi)$ and $\J_G(\tOmega(\tpsi))$, see \cite{BS19}.
In the case when $G=\Sp_{2n}(q)$, $\tG=\CSp_{2n}(q)$ and $\ell$ is an odd prime, $\J_G(\tpsi) = \tG = \J_G(\tOmega(\tpsi))$ is trivially satisfied.

\subsection{}\label{sect:gLt}
Since the case $\PSp_2(q)=\PSL_2(q)$ has been considered, we may assume $n\geq2$.
We will frequently use the notation in \cite{FS89} and \cite{Li19}, so we review those which will be used from now on;
for more notation, we will refer to the definitions in \cite{FS89} and \cite{Li19} when they appear for the first time in the sequel.

Assume $q$ is odd.
Let $\bV$ ($\bV^*$) be the symplectic space of dimension $2n$ (the orthogonal space of dimension $2n+1$)
over the algebraic closure $\barF_p$ of the finite field $\F_p$ of $p$ elements
and $\bG=\Sp(\bV),\tbG=\CSp(\bV),\bG^*=\SO(\bV),\tbG^*=\D_0(\bV^*)$.
Denote by $F$ both the Frobenius maps on $\tbG$ and $\tbG^*$ defining an $\F_q$-structure and by $G,\tG,G^*,\tG^*$ the corresponding groups of fixed points.
These finite groups can be viewed as groups on a symplectic space $V$ or an orthogonal space $V^*$ over $\F_q$.
In particular, $G^* \cong \SO_{2n+1}(q)$.
The regular embedding $i: \bG \to \tbG$, its dual $i^*: \tbG^* \to \bG^*$ and related constructions are as in \cite[\S2.B]{Li19}.
Let $E=\group{F_p}$ be the group of field automorphisms,
then $\tG \rtimes E$ affords all automorphisms of the finite simple group $S=G/Z(G)$ when $n\geq2$.

When $q$ is a power of $2$ and $n\geq2$,
$G$ is simple if $(n,f)\neq(2,1)$ and is its own universal covering group of itself if furthermore $(n,f)\neq(3,1)$,
thus we will not need to introduce $\tbG,\tbG^*,\tG,\tG^*$.
In this case, there are isomorphisms of abstract groups $\bG \cong \bG^*$ (but not as algebraic groups) and $G \cong G^*$.

Assume $q=p^f$ is a power of an odd prime $p$ and $\ell$ is an odd prime different from $p$.
Let $\F_q$ be the field of $q$ elements.
The sets $\cF_0,\cF_1,\cF_2$ of polynomials are defined as in \cite[(1.7)]{FS89} and $\cF=\cF_0\cup\cF_1\cup\cF_2$.
For any $\Gamma\in\cF$, $\delta_\Gamma$ and $\varepsilon_\Gamma$ are defined as in \cite[(1.8),(1.9)]{FS89}.
We will use $\ts$ to denote a semisimple element of $\tG^*$ and use $s=i^*(\ts)$ to denote the image of $\ts$ under $i^*$, which is different from the convention in \cite{FS89}.
The $G^*$-conjugacy class of $s$ is determined by the multiplicity function $\Gamma\in\cF \mapsto m_\Gamma(s)$ and the type function $\Gamma\in\cF \mapsto \eta_\Gamma(s)$; see \cite[pp.125--126]{FS89}.

If $q=2^f$, we define $\cF_0$ as
\[\cF_0=\set{X-1}\]
and let $\cF_1,\cF_2$ be defined in the same way as in \cite[(1.7)]{FS89} with odd $q$ replaced by $2^f$.
Set then $\cF=\cF_0\cup\cF_1\cup\cF_2$.
For any $\Gamma\in\cF$, $\delta_\Gamma$ and $\varepsilon_\Gamma$ are defined in the same way as in \cite[(1.8),(1.9)]{FS89}.
The $G$-conjugacy class of a semisimple element $s$ is determined by the multiplicity function $\Gamma\in\cF \mapsto m_\Gamma(s)$; for the structure of $C_G(s)$, see \cite[Lemma 2.2]{SF14}.
Since $G^* \cong G$, this also gives a parametrization of conjugacy classes of semisimple elements of $G^*$ and a description of their centralizers.

\subsection{}\label{subsec:radical}
The radical subgroups of $G$ are given by An in \cite{An94}.
We recall the construction using the twisted basic subgroups introduced in \cite{Li19}.

Recall that $e$ is the multiplicative order of $q^2$ in $\Z/\ell\Z$ and $\ell$ is said to be linear or unitary if $\ell$ divides $q^2-1$ or $q^e+1$.
Set $\varepsilon=1$ or $-1$ when $\ell$ is linear or unitary.
Let $a=v(q^{2e}-1)$, where $v_\ell$ is the discrete valuation such that $v(\ell)=1$.
A group of symplectic type is a central product $Z_\alpha E_\gamma$ of a cyclic groups $Z_\alpha$ of order $\ell^{a+\alpha}$ and an extraspecial group $E_\gamma$ of order $\ell^{2\gamma+1}$; since only extraspecial groups of exponent $\ell$ occurs in radical subgroups, we always assume the exponent is $\ell$.
Let $R_{m,\alpha,\gamma}^0$ be the embedding of $Z_\alpha E_\gamma$ in $G_{m,\alpha,\gamma}^0:=\GL(m\ell^\gamma,\varepsilon q^{e\ell^\alpha})$ given in \cite[\S6.A]{Li19}.
Let $\bG_{m,\alpha,\gamma}=\Sp(2me\ell^{\alpha+\gamma},\barF_p)$ and $v_{m,\alpha,\gamma}$ be as in \cite[\S6.A]{Li19}.
Set $G_{m,\alpha,\gamma}^{tw}:=\bG_{m,\alpha,\gamma}^{v_{m,\alpha,\gamma}F}$, then the hyperbolic embedding $\hbar: G_{m,\alpha,\gamma}^0 \to G_{m,\alpha,\gamma}^{tw}$ is defined as in \cite[\S6.A]{Li19}.
The image of $R_{m,\alpha,\gamma}^0$ under $\hbar$ is denoted by $R_{m,\alpha,\gamma}^{tw}$.
Let $g_{m,\alpha,\gamma}$ be an element in $\bG_{m,\alpha,\gamma}$ such that $g_{m,\alpha,\gamma}^{-1}F(g_{m,\alpha,\gamma}^{-1})=v_{m,\alpha,\gamma}$; see for example \cite[Theorem 7.1]{CE04}.
The map $\iota: G_{m,\alpha,\gamma}^{tw} \to G_{m,\alpha,\gamma} := \bG_{m,\alpha,\gamma}^F$ defined by the conjugation by $g_{m,\alpha,\gamma}$ is an isomorphism.
Denote $R_{m,\alpha,\gamma}=\iota(R_{m,\alpha,\gamma}^{tw})$.

Let $\bc$ be defined as in \cite[\S6.A]{Li19}, then $R_{m,\alpha,\gamma,\bc}^{tw}=R_{m,\alpha,\gamma}^{tw}\wr A_\bc$ is called a twisted basic subgroup, which is a subgroup of $G_{m,\alpha,\gamma,\bc}^{tw}:=\bG_{m,\alpha,\gamma,\bc}^{v_{m,\alpha,\gamma,\bc}F}$, where $v_{m,\alpha,\gamma,\bc}$ and $\bG_{m,\alpha,\gamma,\bc}$ is as in \cite[\S6.A]{Li19}.
Let $g_{m,\alpha,\gamma,\bc}=g_{m,\alpha,\gamma}\otimes I_{\ell^{|\bc|}}$ and the isomorphism (and all similar homomorphisms) induced by conjugation by $g_{m,\alpha,\gamma,\bc}$ is again denoted by $\iota$.
Set $R_{m,\alpha,\gamma,\bc}=\iota(R_{m,\alpha,\gamma,\bc}^{tw})$, called a basic subgroup, which is conjugate to the basic subgroup denoted by the same symbol in \cite{An94}.
Obviously, $R_{m,\alpha,\gamma,\bc}=R_{m,\alpha,\gamma}\wr A_{\bc}$.

Let $\tau_{m,\alpha,\gamma,\bc}^{tw}$ be as in \cite[(6.3)]{Li19}.
The notation and results on centralizers and normalizers of the basic subgroups are collected in \cite[Lemma 6.4, Lemma 6.6]{Li19}.
Set $\tau_{m,\alpha,\gamma,\bc}=\iota(\tau_{m,\alpha,\gamma,\bc}^{tw})$.

\begin{rem}
We remark that all the above constructions also apply to the case when $q$ is even.
In fact, since $-1=1$ for even $q$, the structure of normalizers in \cite[Lemma 6.6]{Li19} can be even slightly simplified.
To be more specific, $N_{m,\alpha,\gamma}^{tw} = \hbar(N_{m,\alpha,\gamma}^0) \rtimes V_{m,\alpha,\gamma}^{tw}$ while for odd $q$, $\hbar(N_{m,\alpha,\gamma}^0) \cap V_{m,\alpha,\gamma}^{tw} = Z(G_{m,\alpha,\gamma}^{tw})$.
\end{rem}

Assume $q$ is odd.
By \cite{An94}, all radical subgroups of $G$ are conjugate to subgroups of the form $R_0 \times R_1 \times \cdots \times R_u$, where $R_0$ is the trivial group and $R_i=R_{m_i,\alpha_i,\gamma_i,\bc_i}$ is a basic subgroup for $i>0$.
In fact, this also holds for even $q$; see \S\ref{sec:main-3}.
For such a radical subgroup $R$ of $G$, let $v,g,\iota$ be as in \cite[Lemma 6.7]{Li19} and set $\tG^{tw}=\tbG^{vF}$.
In the sequel, when we say ``consider the twisted groups'' or ``transfer to twisted groups'', we will always mean transfer the problems to $\tG^{tw}$ using $\iota$.
The method to use twisted groups to consider local structures is introduced in \cite{CS17}.
This method has some technical advantages.
For example, the diagonal automorphisms and field automorphisms fix the twisted version of radical subgroups (see \cite[Lemma 6.8]{Li19}), which makes the calculation of automorphisms on weights easier.
See \S\ref{sec:inductive} for further applications of the twisted version of radical subgroups.

\subsection{}\label{subsec:ano-conju}
The version of basic subgroups given above is convenient when one considers the action of automorphisms on weights (see \cite{Li19}) and the extension problem.
Now, we will give another conjugate of the basic subgroup $R_{m,\alpha,\gamma,\bc}$, which is convenient when one considers the inclusion of some Brauer pairs in \S\ref{subsec:Brauerpair}.

Note that $R_{m,\alpha}$ is a cyclic subgroup of $\Sp_{2me\ell^\alpha}(q)$ of order $\ell^{a+\alpha}$ consisting of the elements of the form $z_{m,\alpha}(\zeta):=(\iota\circ\hbar)(\zeta I_m)$ for some $\zeta\in\barF_p$ with $\zeta^{\ell^{a+\alpha}}=1$.
Let $\zeta_\ell$ be a primitive $\ell$-th root of unity, then $R_{m,\alpha,\gamma}$ defined as above is conjugate to the group $R_{m,\alpha,\gamma,1}$ generated by the following elements
\[R_{m,\alpha}\otimes I_{\ell^\gamma}, \diag\left\{z_{m,\alpha}(1),z_{m,\alpha}(\zeta_\ell),\cdots,z_{m,\alpha}(\zeta_\ell^{\ell-1})\right\}, I_{2me\ell^\alpha}\otimes Y_j^0, j=1,\ldots,\gamma,\]
where $Y_j^0$ is as in \cite[\S6.A]{Li19}.
Set $R_{m,\alpha,\gamma,\bc,1}=R_{m,\alpha,\gamma,1}\wr A_\bc$, then it is conjugate to the basic subgroup $R_{m,\alpha,\gamma,\bc}$ defined above.
From now on, we will denote any conjugate of the basic subgroup $R_{m,\alpha,\gamma,\bc}$ by the same notation; it should be clear which conjugate is being used by the context.
Note that the definition of $\tau_{m,\alpha,\gamma,\bc}$ and the statement of \cite[Lemma 6.4]{Li19} should be adjusted accordingly.

For any non negative integer $\beta$, denote $\bc_\beta=(1,\dots,1)$ with $\beta$ 1's and set $D_{m,\alpha,\beta}:=R_{m,\alpha,0,\bc_\beta}$.
In \cite{FS89}, $D_{m,\alpha,\beta}$ is denoted by $R_{m,\alpha,\beta}$, but to avoid any confusion of notation, we introduce this new notation.
Then by \cite[(5K)]{FS89}, the defect groups of blocks of $\Sp_{2n}(q)$ are of the form $D_0 \times D_1 \times\cdots\times D_u$ with $D_0$ the trivial group and $D_i=D_{m_i,\alpha_i,\beta_i}$ for $i=1,\cdots,u$.

\subsection{}\label{subsec:partition-symbol}
We now recall some facts about partitions and Lusztig symbols and fix some notation.
In this subsection, $e$ is an arbitrary positive integer (not necessarily a prime).

For a given partition $\lambda$ of some natural number $n$, the $e$-core $\lambda_{(e)}$ and the $e$-quotient $\lambda^{(e)}$ of $\lambda$ is uniquely determined; conversely, the partition is determined by its core and quotient; for details, see \cite[\S3]{Ol93}.
Here, the $e$-core $\lambda_{(e)}$ is again a partition of some natural number $n_0\leq n$, while the $e$-quotient $\lambda^{(e)} = (\lambda_1^{(e)},\ldots,\lambda_e^{(e)})$ is an $e$-tuple (ordered sequence) of partitions.
Given an $e$-core $\kappa$ and an $e$-quotient $Q$, the unique partition $\lambda$ determined by $\lambda_{(e)}=\kappa$ and $\lambda^{(e)}=Q$ is denoted as $\lambda=\kappa*Q$.

For Lusztig symbols, defects and ranks, cores and quotients, degenerate Lusztig symbols, etc., see \cite[\S5]{Ol93}. 
\emph{We remark that degenerate symbols are counted twice when one parametrizes characters, blocks and weights for conformal symplectic groups (see \cite{FS89} and \S\ref{sec:weights} in this paper), while degenerate symbols are counted only once when one parametrizes characters, blocks and weights for symplectic groups (see \cite{Li19}).}
The two copies of the degenerate symbol $\lambda$ are denoted as $\lambda$ and $\lambda'$.
As in \cite{Li19}, the empty Lusztig symbol is viewed as degenerate, thus a non-degenerate symbol is \emph{a fortiori} not empty.
But in many ocasions, the empty Lusztig symbol will be distinguished from the non-empty ones in this paper; the reason for this is that for conformal symplectic groups, the non-empty degenerate symbols are counted twice while the empty symbol is only counted once; see for example Table \ref{tab:block}.

For a given Lusztig symbol $\lambda$ of rank $n$, the $e$-core $\lambda_{(e)}$ and the $e$-quotient $\lambda^{(e)}$ of $\lambda$ is uniquely determined.
But conversely, given an $e$-core $\kappa$ and an $e$-quotient $Q$, there may be one or two Lusztig symbols with $\kappa$ and $Q$ as their $e$-core and $e$-quotient respectively; there are two such symbols if and only if both $\kappa$ and $Q$ are non degenerate.
For details, see \cite[\S5]{Ol93}.
Here, the $e$-core $\lambda_{(e)}$ is again a Lusztig symbol of rank $n_0\leq n$, while the $e$-quotient $\lambda^{(e)}$ is an unordered pair of two $e$-tuples of partitions $[\lambda_1,\dots,\lambda_e;\mu_1,\dots,\mu_e]$, which means that $[\lambda_1,\dots,\lambda_e;\mu_1,\dots,\mu_e]$ and $[\mu_1,\dots,\mu_e;\lambda_1,\dots,\lambda_e]$ are identified; see \cite[\S5]{Ol93}.
An $e$-quotient $[\lambda_1,\dots,\lambda_e;\mu_1,\dots,\mu_e]$ is called degenerate if and only if $(\lambda_1,\dots,\lambda_e)=(\mu_1,\dots,\mu_e)$.

Associated with a given $e$-quotient $Q=[\lambda_1,\dots,\lambda_e;\mu_1,\dots,\mu_e]$, there are one or two ordered sequences of partitions
\[(\lambda_1,\dots,\lambda_e,\mu_1,\dots,\mu_e), \quad (\mu_1,\dots,\mu_e,\lambda_1,\dots,\lambda_e).\]
called the ordered quotient(s) with respect to $Q$ and denoted as $Q_0,Q_0'$.
Thus $Q_0=Q_0'$ if and only if $Q$ is degenerate.

When $\kappa$ is non degenerate, the number of Lusztig symbols with $\kappa$ and $Q$ as their $e$-core and $e$-quotient respectively is exactly the number of ordered quotient(s) with respect to $Q$.
The one or two Lusztig symbols are denoted as $\kappa*Q_0,\kappa*Q_0'$.
Equivalently, the (necessarily non degenerate) Lusztig symbols with non degenerate cores are determined uniquely by the cores and the ordered quotients.

When $\kappa$ is degenerate, then for any given quotient $Q$, there is only one Lusztig symbol $\lambda$ with $\kappa$ and $Q$ as their $e$-core and $e$-quotient respectively.
When $Q$ is non degenerate, $\lambda$ is non degenerate, and we set $\lambda=\kappa*Q_0=\kappa*Q_0'$.
When $Q$ is degenerate (thus we can identify $Q$ with its associated ordered quotient), $\lambda$ is degenerate.
Note that $\lambda$ should be counted twice when one considers conformal groups.
In this case, the two copies of the degenerate symbols are denoted as $\lambda=\kappa*(Q,0)$ and $\lambda'=\kappa*(Q,1)$.

Note that our convention of notation used to consider characters and weights of conformal symplectic groups is slightly different from those in \cite{Li19} used to consider characters and weights of symplectic groups.

\subsection{}\label{subsec:action-tz}
Assume $q$ is odd.
Now, we recall the Jordan decomposition of characters of $\tG$ and consider the action of $\Irr(\tG/G)$ on $\Irr(\tG)$.

\begin{thm}[Lusztig, {\cite[Theorem 15.8]{CE04}}]\label{thm:Jordan}
Assume $\tbG$, $\tG$, $\tbG^*$, $\tG^*$ are as in \S\ref{sect:gLt}.
There is a bijection for any $\ts\in\tG^*$ between Lusztig series (note that $\tbG$ has connected center):
\[ \cJ_{\ts}: \quad \cE(\tG,\ts) \longleftrightarrow \cE(C_{\tG^*}(\ts),1) \]
such that
\[ \Group{\tchi,R_{\tbT}^{\tbG}\hat{\ts}}_{\tG} = \varepsilon_{\tbG}\varepsilon_{C_{\tbG^*}(\ts)} \Group{\cJ_{\ts}(\tchi),R_{\tbT^*}^{C_{\tbG^*}(\ts)}1}_{C_{\tG^*}(\ts)}, \]
where the $\tG$-conjugacy class of $(\tbT,\hat{\ts})$ corresponds to the $\tG^*$-conjugacy class of $(\tbT^*,\ts)$ via duality; see \cite[Theorem 8.21]{CE04}.
Furthermore $\tchi\in\cE(\tG,\ts)$ is uniquely determined by the scalar products $\Group{\tchi,R_{\tbT}^{\tbG}\hat{\ts}}_{\tG}$.
The set $\Irr(\tG)$ of characters of $\tG$ has a decomposition
\[ \Irr(\tG) = \bigcup\limits_{\ts} \cE(\tG,\ts), \]
where $\ts$ runs over a $\tG^*$-transversal of semisimple elements of $\tG^*$.
\end{thm}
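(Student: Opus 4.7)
The plan is to follow Lusztig's construction of the Jordan decomposition of characters, in the ``connected-center'' case afforded by $\tbG=\CSp(\bV)$. The key simplification is that for every semisimple $\ts\in\tG^*$, the centralizer $C_{\tbG^*}(\ts)$ is a connected reductive subgroup, so its irreducible characters carry their own Deligne-Lusztig theory and, in particular, a well-defined set of unipotent characters $\cE(C_{\tG^*}(\ts),1)$.

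First I would set up the rational Lusztig series. Starting from the Deligne-Lusztig virtual characters $R_{\tbT}^{\tbG}(\theta)$ attached to pairs $(\tbT,\theta)$ with $\tbT$ an $F$-stable maximal torus and $\theta\in\Irr(\tbT^F)$, one uses the duality of \cite[Theorem 8.21]{CE04} to turn $\tG$-conjugacy classes of such pairs into $\tG^*$-conjugacy classes of pairs $(\tbT^*,\ts)$ with $\theta=\hat\ts$. The rational series $\cE(\tG,\ts)$ is then defined as the set of irreducible constituents of the $R_{\tbT}^{\tbG}(\hat\ts)$ as $\tbT^*$ ranges over the $F$-stable maximal tori of $C_{\tbG^*}(\ts)$, and the partition $\Irr(\tG)=\bigcup_{\ts}\cE(\tG,\ts)$ follows from two Deligne-Lusztig facts: every irreducible character of $\tG$ occurs in some $R_{\tbT}^{\tbG}(\theta)$, and characters in $\cE(\tG,\ts)$ and $\cE(\tG,\ts')$ are orthogonal whenever $\ts,\ts'$ are not $\tG^*$-conjugate.

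Next I would construct the bijection $\cJ_{\ts}$ itself. The Deligne-Lusztig formalism provides an identity relating $R_{\tbT}^{\tbG}(\hat\ts)$ and $R_{\tbT^*}^{C_{\tbG^*}(\ts)}(1)$ up to the sign $\varepsilon_{\tbG}\varepsilon_{C_{\tbG^*}(\ts)}$, which yields an isometric identification of the \emph{uniform} subspaces of $\cE(\tG,\ts)$ and $\cE(C_{\tG^*}(\ts),1)$ (the subspaces spanned by Deligne-Lusztig characters induced from tori). This identification is used to define $\cJ_{\ts}$ on the uniform part, and the scalar product formula in the theorem statement then becomes essentially tautological there; the promised uniqueness of $\tchi\in\cE(\tG,\ts)$ from its scalar products with the various $R_{\tbT}^{\tbG}(\hat\ts)$ then follows from a dimension count within each series, once one knows that $|\cE(\tG,\ts)|=|\cE(C_{\tG^*}(\ts),1)|$.

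The hard part will be extending the bijection from uniform class functions to the full series: cuspidal unipotent characters of $C_{\tG^*}(\ts)$ are orthogonal to all the $R_{\tbT^*}^{C_{\tbG^*}(\ts)}(1)$, and so matching them with genuine irreducible characters in $\cE(\tG,\ts)$ cannot be read off from the scalar product formula alone. This step appeals to Lusztig's classification of unipotent characters into families together with the non-abelian Fourier transform, as carried out in \cite{CE04}, and it is the main analytic input one ultimately takes from the literature.
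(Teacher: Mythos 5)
The paper does not prove this theorem; it is stated with explicit attribution to Lusztig and the only justification offered is the citation to \cite[Theorem 15.8]{CE04}. So there is no in-paper proof to compare your sketch against --- the comparison is really between your sketch and the known proof of the Jordan decomposition. Your outline is broadly accurate on the main structural points: the partition of $\Irr(\tG)$ into rational Lusztig series via the duality $(\tbT,\hat{\ts}) \leftrightarrow (\tbT^*,\ts)$; the disjointness of series and the fact that every irreducible occurs in some $R_{\tbT}^{\tbG}\theta$; the isometry between the uniform parts of $\cE(\tG,\ts)$ and $\cE(C_{\tG^*}(\ts),1)$ coming from the sign relation $\varepsilon_{\tbG}\varepsilon_{C_{\tbG^*}(\ts)}$; and the non-elementary extension to genuine characters via Lusztig's families and the non-abelian Fourier transform. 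You also correctly identify that connectedness of $C_{\tbG^*}(\ts)$ is exactly what the connected-center hypothesis buys, which is what makes the unipotent series $\cE(C_{\tG^*}(\ts),1)$ a clean object.

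However, your explanation of the uniqueness claim is wrong, and the error is not cosmetic. You assert that uniqueness of $\tchi$ from its scalar products with the $R_{\tbT}^{\tbG}\hat{\ts}$ ``follows from a dimension count within each series, once one knows that $|\cE(\tG,\ts)|=|\cE(C_{\tG^*}(\ts),1)|$.'' A dimension count cannot deliver this: it gives the right cardinality of the series but says nothing about whether two distinct characters in that series might have identical vectors of scalar products (i.e., the same uniform projection). The uniqueness statement is in fact a hard theorem of Lusztig, special to groups with connected center, asserting that irreducible characters of $\tG$ are determined by their uniform projections. It can and does fail without the connected-center hypothesis --- e.g.\ for $\SL_2$ the two ``halves'' of a cuspidal pair share a uniform projection --- and restoring it is precisely why one works with $\tbG = \CSp$ rather than $\bG = \Sp$ throughout this part of the paper. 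So the uniqueness is one of the deep inputs from the literature, on par with the family/Fourier-transform machinery you already flagged, not a bookkeeping consequence of the rest of the argument.
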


Let $s=i^*(\ts)$, then $C_{\bG^*}^\circ(s)^F$ is as in \cite[\S3.A]{Li19}.
We can identify $\cE(C_{\tG^*}(\ts),1)$ with $\cE(C_{\bG^*}^\circ(s)^F,1)$, which can be parametrized by $\lambda=\prod_\Gamma\lambda_\Gamma$, where $\lambda_\Gamma$ is a partition of $m_\Gamma(s)$ for $\Gamma\in\cF_1\cup\cF_2$
while $\lambda_\Gamma$ is a Lusztig symbol of rank $[\frac{m_\Gamma(s)}{2}]$ for $\Gamma\in\cF_0$; see \cite[\S13.8]{Car85}.
Recall that the degenerate symbols in component $X+1$ are counted twice.
For $\lambda$ as above, $\lambda'$ is defined as below: $\lambda'_\Gamma=\lambda_\Gamma$ when $\lambda_\Gamma$ is a partition or non degenerate symbol and $(\lambda')_\Gamma = (\lambda_\Gamma)'$ when $\lambda_\Gamma$ is a degenerate symbol.
Denote the characters in $\cE(C_{\tG^*}(\ts),1)$ and $\cE(\tG,\ts)$ corresponding to $\lambda$ by $\chi_\lambda$ and $\chi_{\ts,\lambda}$ respectively.

\begin{rem}\label{rem:action-tz}
The duality induces an isomorphism of abelian groups (see \cite[(8.19)]{CE04}):
\[ Z(\tG^*) \to \Irr(\tG/G), \quad \tz \mapsto \hat{\tz}. \]
We consider the action of $\Irr(\tG/G)$ on $\Irr(\tG)$.
Note that by \cite[(8.20)]{CE04}, $\hat{\tz}\chi_{\ts,\lambda} \in \cE(\tG,\tz\ts)$.
Then
\begin{align*}
\Group{\hat{\tz}\chi_{\ts,\lambda},R_{\tbT}^{\tbG}\widehat{\tz\ts}}_{\tG}
&= \Group{\chi_{\ts,\lambda},\hat{\tz}^{-1}R_{\tbT}^{\tbG}\widehat{\tz\ts}}_{\tG}
= \Group{\chi_{\ts,\lambda},R_{\tbT}^{\tbG}\hat{\tz}^{-1}\widehat{\tz\ts}}_{\tG}
= \Group{\chi_{\ts,\lambda},R_{\tbT}^{\tbG}\hat{\ts}}_{\tG} \\
&= \varepsilon_{\tbG}\varepsilon_{C_{\tbG^*}(\ts)} \Group{\chi_\lambda,R_{\tbT^*}^{C_{\tbG^*}(\ts)}1}_{C_{\tG^*}(\ts)}.
\end{align*}
On the other hand, note that $C_{\tbG^*}(\tz\ts)=C_{\tbG^*}(\ts)$, then
\[ \Group{\chi_{\tz\ts,\lambda},R_{\tbT}^{\tbG}\widehat{\tz\ts}}_{\tG}
= \varepsilon_{\tbG}\varepsilon_{C_{\tbG^*}(\ts)} \Group{\chi_\lambda,R_{\tbT^*}^{C_{\tbG^*}(\ts)}1}_{C_{\tG^*}(\ts)}. \]
Thus with the Jordan decomposition of $\Irr(\tG)$ as in Theorem \ref{thm:Jordan}, $\hat{\tz}\chi_{\ts,\lambda} = \chi_{\tz\ts,\lambda}$.

\paragraph{}
Fix a generator $\tz_0$ of $Z(\tG^*)$, then $\tz_0^{(q-1)/2}=-e$, where $e$ is the identity element of the special Clifford group $\D_0(V^*)$.
For each $G^*$-conjugacy class of semisimple elements, fix a representative $s$ and a preimage $\ts$ of $s$ under $i^*$.

\begin{enumerate}[(1)]\setlength{\itemsep}{0pt}

\item If $m_{X+1}(s)=0$, then ${i^*}^{-1}(\Cl_{G^*}(s))$ consists of $q-1$ conjugacy classes of $\tG^*$, each of which contains exactly one element $\tz\ts$ of $\F_q^\times\ts$ by \cite[(2D)]{FS89}.
Then $Z(\tG^*) \cong \Irr(\tG/G)$ acts regularly on $\set{ \chi_{\tz\ts,\lambda} \mid \tz \in Z(\tG^*)}$ as follows
\[
\chi_{\ts,\lambda} \xmapsto{\widehat{\tz_0}\cdot}  \chi_{\tz_0\ts,\lambda}  \xmapsto{\widehat{\tz_0}\cdot}  \cdots   \xmapsto{\widehat{\tz_0}\cdot}  \chi_{\tz_0^{q-2}\ts,\lambda} \xmapsto{\widehat{\tz_0}\cdot} \chi_{\ts,\lambda}.
\]

\item If $m_{X+1}(s)\neq0$, then ${i^*}^{-1}(\Cl_{G^*}(s))$ consists of $(q-1)/2$ conjugacy classes of $\tG^*$, each of which contains exactly two elements $\tz\ts,-\tz\ts$ of $\F_q^\times\ts$ by \cite[(2D)]{FS89}.

\begin{enumerate}\setlength{\itemsep}{-2pt}
\item If furthermore $\lambda_{X+1}$ is non degenerate, then by \cite[(4C)]{FS89}, $\widehat{-e}\chi_{\ts,\lambda}=\chi_{\ts,\lambda}$.
So, with the Jordan decomposition of $\Irr(\tG)$ as in Theorem \ref{thm:Jordan}, we have $\chi_{-\ts,\lambda}=\chi_{\ts,\lambda}$.
Thus $Z(\tG^*) \cong \Irr(\tG/G)$ acts transitively on $\set{ \chi_{\tz\ts,\lambda} \mid \tz \in Z(\tG^*)}$ with kernel $\group{-e}$ as follows
\[
\chi_{\ts,\lambda} \xmapsto{\widehat{\tz_0}\cdot}  \chi_{\tz_0\ts,\lambda}  \xmapsto{\widehat{\tz_0}\cdot}  \cdots   \xmapsto{\widehat{\tz_0}\cdot}  \chi_{\tz_0^{(q-1)/2-1}\ts,\lambda} \xmapsto{\widehat{\tz_0}\cdot} \chi_{\ts,\lambda}.
\]

\item If furthermore $\lambda_{X+1}$ is degenerate, then $\widehat{-e}\chi_{\ts,\lambda}=\chi_{\ts,\lambda'}$ by \cite[(4C)]{FS89}.
Thus with the Jordan decomposition of $\Irr(\tG)$ as in Theorem \ref{thm:Jordan}, we have $\chi_{-\ts,\lambda}=\chi_{\ts,\lambda'}$.
Then $Z(\tG^*) \cong \Irr(\tG/G)$ acts regularly on $\set{ \chi_{\tz\ts,\lambda},\chi_{\tz\ts,\lambda'} \mid \tz \in Z(\tG^*)}$ and we can choose the labels such that
\[
\chi_{\ts,\lambda} \xmapsto{\widehat{\tz_0}\cdot}  \cdots   \xmapsto{\widehat{\tz_0}\cdot}  \chi_{\tz_0^{(q-1)/2-1}\ts,\lambda} \xmapsto{\widehat{\tz_0}\cdot} \chi_{\ts,\lambda'} \xmapsto{\widehat{\tz_0}\cdot}  \cdots   \xmapsto{\widehat{\tz_0}\cdot}  \chi_{\tz_0^{(q-1)/2-1}\ts,\lambda'} \xmapsto{\widehat{\tz_0}\cdot} \chi_{\ts,\lambda}.
\]

\end{enumerate}
\end{enumerate}


\end{rem}


\section{Brauer pairs and blocks of conformal groups}\label{sec:prepare}

From now on until the last section, we assume $q$ is odd.
Before considering the weights of conformal symplectic groups, we make some preparations, including classification of radical subgroups, some considerations of Brauer pairs and blocks.


\subsection{}\label{subsec:Rad-tG} 
We begin by dealing with the conjugacy classes of radical subgroups of $\tG$.
Denote by $Z(\tG)_\ell$ the $\ell$-part of the cyclic group $Z(\tG)$.
For any symplectic space $V$, denote by $\I_0(V)$ and $\J_0(V)$ the symplectic group and the conformal symplectic group on $V$ respectively.
Recall that for any radical subgroup $R= R_0 \times R_1 \times\cdots\times R_u$ with $R_i=R_{m_i,\alpha_i,\gamma_i,\bc_i}$ for $i=1,\ldots,u$, there is a corresponding decomposition of the space $V= V_0\perp V_1\perp\cdots\perp V_u$.

\begin{prop}\label{prop:Rad(tG)}
For any radical subgroup $R$ of $G$ as above, denote $\tR:=Z(\tG)_\ell R$.
\begin{enumerate}[(1)]\setlength{\itemsep}{0pt}
\item $\tN:=N_{\tG}(\tR)=N_{\tG}(R)=\group{\tau,N}$ with $N:=N_G(R)$ and $\tau=\tau_0\times\tau_1\times\cdots\times\tau_u$, where $\tau_0$ is an element of order $q-1$ generating $\J_0(V_0)$ modulo $\I_0(V_0)$ and $\tau_i=\tau_{m_i,\alpha_i,\gamma_i,\bc_i}$ for $i=1,\ldots,u$.
\item The map $R \mapsto \tR$ defines a bijection between $\Rad(G)$ and $\Rad(\tG)$ with inverse $\tR \mapsto G\cap\tR$, which induces a bijection between $\Rad(G)/\simi{G}$ and $\Rad(\tG)/\simi{\tG}$.
\end{enumerate}
\end{prop}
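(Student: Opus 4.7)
The overall strategy rests on the observation that, since $\ell$ is odd and $Z(G)=\set{\pm 1}$, one has $Z(\tG)_\ell\cap G\leq Z(G)_\ell=1$, so $\tR=Z(\tG)_\ell\times R$ as an internal direct product and $\tR\cap G=R$. The remainder is essentially bookkeeping built around the multiplier map $\mu:\tG\to\F_q^\times$, whose kernel is $G$.

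For part~(1), the identity $N_{\tG}(\tR)=N_{\tG}(R)$ is immediate: every element of $\tG$ preserves the central factor $Z(\tG)_\ell$ automatically, while conversely anything preserving $\tR$ preserves $\tR\cap G=R$. The inclusion $\group{\tau,N}\subseteq N_{\tG}(R)$ holds because each component $\tau_i$ normalizes $R_i$ (trivially for $i=0$ since $R_0=1$, and by \cite[Lemma 6.4]{Li19} for $i\geq 1$). For the reverse inclusion I would apply $\mu$: since $\ker\mu=G$, restriction gives an embedding $N_{\tG}(R)/N\hookrightarrow\F_q^\times$, and because $\tau_0$ is chosen to generate $\J_0(V_0)/\I_0(V_0)\cong\F_q^\times$, the value $\mu(\tau)=\mu(\tau_0)$ is a generator of $\F_q^\times$. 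Hence the embedding is surjective and the coset of $\tau$ alone generates the quotient.

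For part~(2), that $R\mapsto\tR$ lands in $\Rad(\tG)$ is checked as follows. Set $P:=\rO_\ell(N_{\tG}(\tR))$. Both $Z(\tG)_\ell$ and $R$ are normal $\ell$-subgroups of $N_{\tG}(\tR)$, so $Z(\tG)_\ell R\subseteq P$; conversely $P\cap G$ is a normal $\ell$-subgroup of $N_{\tG}(R)\cap G=N_G(R)$, hence lies in $\rO_\ell(N_G(R))=R$, giving $P\cap G=R$. Then $P/R$ embeds into the cyclic group $\tG/G\cong\F_q^\times$ while containing the faithful image of $Z(\tG)_\ell$; since $|Z(\tG)_\ell|=(q-1)_\ell$ is already the full $\ell$-part of $q-1$, this forces $P=Z(\tG)_\ell R=\tR$. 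The inverse map $\tR\mapsto\tR\cap G$ is handled symmetrically: for any $\tilde S\in\Rad(\tG)$, the central $\ell$-subgroup $Z(\tG)_\ell$ lies in $\rO_\ell(N_{\tG}(\tilde S))=\tilde S$, and the same cyclic-Sylow argument in $\tG/G$ yields $\tilde S=Z(\tG)_\ell\cdot(\tilde S\cap G)$; setting $R:=\tilde S\cap G$ one verifies $N_{\tG}(R)=N_{\tG}(\tilde S)$ and deduces $R=\rO_\ell(N_G(R))$. Injectivity of the forward map is clear from $\tR\cap G=R$.

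To descend to conjugacy classes, the implication $R_1\simi R_2$ (in $G$) $\Rightarrow \tR_1\simi\tR_2$ (in $\tG$) is trivial, and conversely if $\tR_1=\tR_2^{\,g}$ for some $g\in\tG$, intersecting with $G$ yields $R_1=R_2^{\,g}$; by part~(1), $N_{\tG}(R_2)\cdot G=\tG$, so one may modify $g$ on the left by an element of $N_{\tG}(R_2)$ to bring it into $G$. The only nonroutine ingredient is the construction of $\tau$: one needs the $\tau_i$ to share a common multiplier generating $\F_q^\times$ so that their block-diagonal product lies in $\tG$ and normalizes $R$. This is precisely what the definitions in \cite[Lemma~6.4]{Li19} (combined with the extra $\tau_0$-factor for the trivial component $R_0=1$) are designed to deliver, so no new work is required there.
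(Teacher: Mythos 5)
Your proof is correct but takes a genuinely different and more elementary route than the paper's. For part~(1) the paper simply cites \cite[Lemma 6.6(4)]{Li19}, whereas you give a self-contained argument via the multiplier map $\mu:\tG\to\F_q^\times$ with kernel $G$. For part~(2) the paper passes to the intermediate normal subgroup $\hG=Z(\tG)G$ of index $2$ in $\tG$, invokes the general fact that $\Rad(\tG)=\Rad(\hG)$ because $|\tG:\hG|$ is prime to $\ell$, and then exploits the central product decomposition $\hG=Z(\tG)\times_{Z(G)}G$; you instead stay inside $\tG$ and run explicit Sylow-type arguments in the cyclic quotient $\tG/G\cong\F_q^\times$, using the facts that $Z(\tG)_\ell\cap G=1$ and that $Z(\tG)_\ell$ maps isomorphically onto the full $\ell$-part of $\tG/G$. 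The key computation $\rO_\ell(N_{\tG}(\tR))=\tR$, the reverse inclusion showing any $\tilde S\in\Rad(\tG)$ decomposes as $Z(\tG)_\ell\cdot(\tilde S\cap G)$ with $\tilde S\cap G\in\Rad(G)$, and the descent to conjugacy classes via $N_{\tG}(R)G=\tG$ are all sound (the last step uses that $\mu(\tau)=\mu(\tau_0)$ generates $\F_q^\times$, which you correctly flag as the content of the construction of $\tau$). What the paper's route buys is brevity by off-loading to citable general facts; what yours buys is a self-contained argument transparent to a reader not wanting to unpack the $\hG$ reduction. One small point you gloss over, though it is routine: the step ``deduces $R=\rO_\ell(N_G(R))$'' requires observing that $\rO_\ell(N_G(R))$ is characteristic in $N_G(R)=N_{\tG}(\tilde S)\cap G$, which is normal in $N_{\tG}(\tilde S)$, hence $\rO_\ell(N_G(R))\leq\rO_\ell(N_{\tG}(\tilde S))\cap G=\tilde S\cap G=R$.
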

\begin{proof}
The first assertion follows from \cite[Lemma 6.6 (4)]{Li19}.
Let $\hG=Z(\tG)G$.
Since $|\tG:\hG|=2$ and $\ell$ is odd, $\Rad(\tG)=\Rad(\hG)$.
Now $\hG$ is the central product of $Z(\tG)$ and $G$ over $Z(G)$ and $|Z(G)|=2$, the maps given in (2) are bijections between $\Rad(G)$ and $\Rad(\hG)$.
By the definition, $\tau$ generates $\tG$ modulo $G$.
Thus $|\tG:N_{\tG}(R)|=|G:N_G(R)|$ and the given maps induce also a bijection between conjugacy classes.
\end{proof}

Many results about the defect groups of blocks of $\tG$ in \cite{FS89} hold for radical subgroups of $\tG$.
In the sequel, when we state such a result for all radical subgroups, we will often refer to \cite{FS89} for the corresponding result for defect groups.
We will also give new proofs for some of these statements using explicit calculations in twisted groups.


\subsection{Brauer pairs}\label{subsec:Brauerpair}
Let $R$ be a radical subgroup of $G$ as above.
Set $R_+=R_1 \times\cdots\times R_u$, then $R=R_0\times R_+$, $V = V_0 \perp V_+$ and all related constructions can be decomposed correspondingly.
Then $C:=C_G(R)= C_0 \times C_+$, where $C_0=\I_0(V_0)$ and $C_+=C_1 \times\cdots\times C_u$ with  $C_i=C_{m_i,\alpha_i,\gamma_i,\bc_i}$ for $i>0$; for $C_{m_i,\alpha_i,\gamma_i,\bc_i}$, see the definition before \cite[Lemma 6.4]{Li19}.
Let $\tau$ be as before, then by \cite[Lemma 6.4]{Li19}, $\tC:=C_{\tG}(R)=C_{\tG}(\tR)=\group{\tau,C}$, $[\tau,RC]=1$ and $\tau^{q-1}\in Z(C)$.
The following is a generalization of \cite[(5J)]{FS89} to radical subgroups.

\begin{lem}\label{lem:cen-prod}
Let $R$ be a radical subgroup of $G$, $\tR=Z(\tG)_\ell R$ and $Z_+$ be a subgroup of $Z(C_+)$ containing $\tau_+^{q-1}$.
\begin{enumerate}[(1)]\setlength{\itemsep}{0pt}
\item $\tC$ is a central product of $\group{\tau,C_0Z_+}$ and $C_+$ over $Z_+$.
\item Assume $Z_+=Z(C_+)$, then $Z(\tG)_\ell\leq\group{\tau,C_0Z(C_+)}$.
	If $g\in N$ and $g_0,g_+$ are the restriction of $g$ to $V_0,V_+$ respectively, then $[\tau_0,g_0]\in C_0, [\tau_+,g_+]\in Z(C_+)$.
	In particular, $N$ and thus $\tN$ normalize $\group{\tau,C_0Z(C_+)}$ and $C_+$.
\end{enumerate}
\end{lem}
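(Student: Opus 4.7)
The plan is to work componentwise along the orthogonal decomposition $V=V_0\perp V_+$, using the given relations $[\tau,RC]=1$, $\tau_0^{q-1}=1$ (since $\tau_0$ has order $q-1$), and $\tau_+^{q-1}\in Z_+$. The main difficulty is really just bookkeeping of similitude factors on the two summands; no structural input beyond the properties of $\tau$ recorded in \cite[Lemma 6.4, 6.6]{Li19} is needed.

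For part (1), I would verify the three axioms of a central product of $A:=\langle\tau,C_0Z_+\rangle$ with $C_+$ amalgamated over $Z_+$. The decomposition $\tC=\langle\tau\rangle C_0C_+$ together with $Z_+\subseteq C_+$ yields $\tC=A\cdot C_+$, while commutativity $[A,C_+]=1$ reduces instantly to $[\tau,C]=1$, the direct product $C=C_0\times C_+$, and $Z_+\leq Z(C_+)$. The delicate point is the intersection $A\cap C_+\subseteq Z_+$: any $\tau^kc_0z_+\in A$ lying in $C_+$ must act trivially on $V_0$, which restricts to $\tau_0^kc_0=1$; since $\tau_0$ has order $q-1$ modulo $\I_0(V_0)=C_0$, this forces $(q-1)\mid k$, and then $\tau^k=(\tau_+^{q-1})^{k/(q-1)}\in Z_+$, so the whole element lies in $Z_+$.

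For the first claim in part (2), given $\lambda I_V\in Z(\tG)_\ell$, let $\mu_0$ be the similitude factor of $\tau_0$ (a generator of $\F_q^\times$, since $\tau_0$ generates $\J_0(V_0)/\I_0(V_0)$) and choose $k$ with $\mu_0^k=\lambda^2$. Then $\lambda I_V\cdot\tau^{-k}$ has trivial similitude on each of $V_0$ and $V_+$, so lies in $C=C_0\times C_+$; writing it as $c_0\oplus c_+$, we have $c_+=\lambda I_{V_+}\tau_+^{-k}$, and since $\lambda I_{V_+}$ is scalar and $\tau_+$ centralizes $C_+$ via $[\tau,C]=1$, we get $c_+\in Z(C_+)$, giving $\lambda I_V\in\langle\tau,C_0Z(C_+)\rangle$. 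For the commutator claims, any $g\in N$ preserves $V_0=V^R$ and its orthogonal complement $V_+$, so decomposes as $g_0\oplus g_+$ with each factor symplectic on its piece; then $[\tau_0,g_0]\in C_0$ is free from the abelian-ness of $\J_0(V_0)/\I_0(V_0)$. For $[\tau_+,g_+]\in Z(C_+)$, I would invoke a general principle: whenever $x$ centralizes a subgroup $B$ and $a$ normalizes $B$, the commutator $[x,a]=x^{-1}(a^{-1}xa)$ is a product of two elements centralizing $B$ and hence centralizes $B$ itself. Applying this with $(x,a)=(\tau_+,g_+)$ first to $B=R_+$ places $[\tau_+,g_+]$ in $C_{\Sp(V_+)}(R_+)=C_+$, and then to $B=C_+$ places it in $Z(C_+)$. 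The ``in particular'' statement then follows: $g\tau g^{-1}=\tau\cdot(\tau^{-1}g\tau g^{-1})\in\tau\cdot C_0\cdot Z(C_+)$, and $g$ normalizes each of $C_0$, $Z(C_+)$, $C_+$, while $\tau$ centralizes all three via $[\tau,C]=1$, so both $g$ and $\tau$ (hence $\tN$) normalize $\langle\tau,C_0Z(C_+)\rangle$ and $C_+$.
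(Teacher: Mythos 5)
Your proof is correct, and it takes a genuinely different route from the paper's, which simply cites that ``this can be proved by the same method as in [FS89, (5J)], or\dots by transferring to the twisted group $\tG^{tw}$.'' Where the paper outsources the argument, you carry out a direct, elementary verification working componentwise along $V=V_0\perp V_+$.

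The key ingredients you isolate are cleanly chosen. For (1), the intersection condition $A\cap C_+\subseteq Z_+$ comes down to restricting to $V_0$ and using that $\tau_0$ has order \emph{exactly} $q-1$ (not merely $q-1$ modulo $\I_0(V_0)$) --- that is the paper's chosen normalization of $\tau_0$, and your argument correctly exploits it to get $\tau_0^k=1$ and hence $c_0=1$ once $(q-1)\mid k$. For the first claim of (2), the similitude-factor computation ($\lambda I_V\tau^{-k}\in C_G(R)$ once $\mu_0^k=\lambda^2$, then restricting to $V_+$ to land in $Z(C_+)$) is a clean observation that avoids the twisted-group setup entirely. For the commutator claims, the ``general principle'' you invoke ($x\in C(B)$, $a\in N(B)$ $\Rightarrow$ $[x,a]\in C(B)$) applied in two stages, first with $B=R_+$ and then with $B=C_+$, is exactly the right mechanism; one should note that the similitude factor of $[\tau_+,g_+]$ is trivial so the commutator lands in $\Sp(V_+)$, which you use implicitly by writing $C_{\Sp(V_+)}(R_+)=C_+$. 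The ``in particular'' deduction is routine once the commutator claims are in hand. Overall: what the paper's citation buys is brevity and alignment with the Fong--Srinivasan framework; what your explicit argument buys is self-containment and a transparent accounting of precisely which properties of $\tau$ (order $q-1$ of $\tau_0$, similitude factor generating $\F_q^\times$, $[\tau,RC]=1$) are actually used. Both are valid; yours would be the preferable write-up for a reader not already fluent in [FS89].
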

\begin{proof}
This can be proved by the same method as in \cite[(5J)]{FS89}, or can be proved with explicit calculation by transferring to the twisted group $\tG^{tw}$ and using the choice of $\tau_{m,\alpha,\gamma,\bc}^{tw}$ in \cite[(6.3)]{Li19}.
\end{proof}

\begin{rem}\label{rem:cen-prod}
As in \cite{FS89}, mainly the following two cases of $Z_+$ are used:
\begin{enumerate}[(1)]\setlength{\itemsep}{0pt}
\item $Z_+=\group{\tau_+^{q-1}}$, in which case,
	$\tC$ is viewed as a central product of $\group{\tau,C_0}$ and $C_+$ over $\group{\tau_+^{q-1}}$.
\item $Z_+=Z(C_+)$, in which case, we denote $\tC_0=\group{\tau,C_0Z(C_+)}$.
	As shown in the above lemma, the advantage of this case is that $\tN$ stabilize this central product decomposition.
\end{enumerate}
\end{rem}

Let $\cF'$ be the subset of polynomials in $\cF$ whose roots have $\ell'$-order.
For any $\Gamma\in\cF'$, $G_\Gamma,R_\Gamma,C_\Gamma,N_\Gamma,\theta_\Gamma$ are as on \cite[p.22]{An94}.
We refer to \cite[\S6.B]{Li19} for the constructions for symplectic groups.
In particular, $\Gamma$ determines an $N_\Gamma$-conjugacy class in $\dz(C_\Gamma/R_\Gamma)$ and $\theta_\Gamma$ is a chosen one in this conjugacy class.
Defined as in \cite[\S6.B]{Li19} are also $R_{\Gamma,\gamma,\bc}$, $\theta_{\Gamma,\gamma,\bc}$, the set $\cR_{\Gamma,\delta}=\set{R_{\Gamma,\delta,1},R_{\Gamma,\delta,2},\ldots}$, $\theta_{\Gamma,\delta,i}$, and the Notation 6.10.
Recall that $D_{m,\alpha,\beta}$ is defined at the end of \S\ref{subsec:ano-conju}.
Then similarly, we denote $D_{\Gamma,\beta}=D_{m_\Gamma,\alpha_\Gamma,\beta}$.
So $D_{\Gamma,\beta}=R_\Gamma\wr A_{\bc_\beta}$.

Let $\theta\in\dz(C/Z(R))$, then $\theta=\theta_0\times\theta_+$, where $\theta_0$ is a character of $\I_0(V_0)$ of defect zero, while $\theta_+$ and $R_+$ can be decomposed as follows:
\begin{equation}\label{equ:theta_+R_+}
\theta_+ = \prod_{\Gamma,\delta,i} \theta_{\Gamma,\delta,i}^{t_{\Gamma,\delta,i}},\quad
R_+=\prod_{\Gamma,\delta,i}R_{\Gamma,\delta,i}^{t_{\Gamma,\delta,i}}.
\addtocounter{thm}{1}\tag{\thethm}
\end{equation}

Recall that $\tR=Z(\tG)_\ell R$.
Then by Lemma \ref{lem:cen-prod}, the results for canonical characters associated with defect groups in \cite[\S7]{FS89} can be generalized to radical subgroups.
Specifically, any $\ttheta\in\dz(\tC/Z(\tR)\mid\theta_R)$ is of the form
\[\ttheta=\ttheta_0\theta_+,\]
where $\ttheta_0$ is the canonical character of a block of $\tC_0=\group{\tau,C_0Z(C_+)}$ of the central defect group $Z(\tR)=Z(\tG)_\ell Z(R)$, and the linear characters of $Z(C_+)$ induced by $\ttheta_0$ and $\theta_+$ are the same.
Note that $\ttheta_0\in\Irr(\tC_0\mid\theta_0)$.
Recall that
\begin{equation}\label{equ:cen-prod-tC-R}
\tC_0 = \group{\tau,C_0} \times_{\group{\tau_+^{q-1}}} Z(C_+)
\addtocounter{thm}{1}\tag{\thethm}
\end{equation}
is a central product over $\group{\tau_+^{q-1}}$.

Let $D'=D'_0\times D'_+$, $\theta'=\theta'_0\times\theta'_+$ with $D'_0=R_0$, $\theta'_0=\theta_0$ and
\[D'_+= \prod_\Gamma R_\Gamma^{m_\Gamma},\quad \theta'_+=\prod_\Gamma\theta_\Gamma^{m_\Gamma},\]
where, $m_\Gamma = \sum_{\delta,i}t_{\Gamma,\delta,i}\ell^\delta$.
Let $\tD'=Z(\tG)_\ell D'$, $C',\tC'$ denote the centralizers of $D',\tD'$ in $G,\tG$ respectively and $C' = C'_0 \times C'_+$ with $C'_0=C_0=\I_0(V_0)$.
Then $\theta' \in \dz(C'/Z(D'))$.
Assume $\ttheta'=\ttheta'_0\theta'_+\in\dz(\tC'/Z(\tD')\mid\theta')$ and $\theta'$ are in the same relation as $\ttheta$ and $\theta$.
Note that
\begin{equation}\label{equ:cen-prod-tC-D'}
\tC'_0:= \group{\tau,C'_0Z(C'_+)} = \group{\tau,C'_0} \times_{\group{\tau_+^{q-1}}} Z(C'_+)
\addtocounter{thm}{1}\tag{\thethm}
\end{equation}
is a central product over $\group{\tau_+^{q-1}}$.
Since $Z(C_+) \leq Z(C'_+)$, $\ttheta'_0$ can be chosen to be an extension of $\ttheta_0$ by (\ref{equ:cen-prod-tC-R}) and (\ref{equ:cen-prod-tC-D'}).

Let $D=D_0\times D_+$, $\theta_D=\theta_{D,0}\times\theta_{D,+}$ with $D_0=R_0$, $\theta_{D,0}=\theta_0$ and
\[D_+= \prod_{\Gamma,\beta} D_{\Gamma,\beta}^{r_{\Gamma,\beta}},\quad \theta_{D,+}=\prod_{\Gamma,\beta}(\theta_\Gamma\otimes I_{\ell^\beta})^{r_{\Gamma,\beta}},\]
where, $m_\Gamma = \sum_\beta r_{\Gamma,\beta}\ell^\beta$ is the $\ell$-adic decomposition of $m_\Gamma$.
Let $\tD=Z(\tG)_\ell D$, $C_D,\tC_D$ denote the centralizers of $D,\tD$ in $G,\tG$ respectively and $C_D = C_{D,0} \times C_{D,+}$ with $C_{D,0}=C_0=\I_0(V_0)$.
Then $\theta_D \in \dz(C_D/Z(D))$.
Assume $\ttheta_D=\ttheta_{D,0}\theta_{D,+}\in\dz(\tC_D/Z(\tD)\mid\theta_D)$ and $\theta_D$ are in the same relation as $\ttheta$ and $\theta$.
Similarly as above, 
\begin{equation}\label{equ:cen-prod-tC-D}
\tC_{D,0} = \group{\tau,C_{D,0}Z(C_{D,+})} = \group{\tau,C_{D,0}} \times_{\group{\tau_+^{q-1}}} Z(C_{D,+})
\addtocounter{thm}{1}\tag{\thethm}
\end{equation}
is a central product over $\group{\tau_+^{q-1}}$.
Since $Z(C_{D,+}) \leq Z(C'_+)$, $\ttheta_{D,0}$ can be chosen to be the restriction of $\ttheta'_0$.

\begin{rem}
When every basic subgroup in $R$ is chosen to be the conjugate in \S\ref{subsec:ano-conju}, the element $\tau=\iota(\tau^{tw})$ for $R,D',D$ can be chosen to be the same one and we make this choice in the sequel.
\end{rem}

\begin{prop}\label{prop:Brauerpair}
With the above notation,
\begin{enumerate}[(1)]\setlength{\itemsep}{0pt}
\item $(D',\theta') \unlhd (D,\theta_D)$ as Brauer pairs in $G$.
	$(D,\theta_D)$ is a maximal pair for some block $B$ of $G$ and all maximal pairs are of this form.
\item $(\tD',\ttheta') \unlhd (\tD,\ttheta_D)$ as Brauer pairs in $\tG$.
	$(\tD,\ttheta_D)$ is a maximal pair for some block $\tB$ of $\tG$ and all maximal pairs are of this form.
\item $\tB$ covers $B$.
\item $(R,\theta) \leq (D,\theta_D)$ as Brauer pairs in $G$ and $(\tR,\ttheta) \leq (\tD,\ttheta_D)$ as Brauer pairs in $\tG$.
\end{enumerate}
\end{prop}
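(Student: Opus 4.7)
The plan is to derive parts (1) and (2) directly from the block classification of Fong--Srinivasan combined with the central-product decompositions (\ref{equ:cen-prod-tC-R})--(\ref{equ:cen-prod-tC-D}), deduce (3) from the defining covering relation between $\ttheta_D$ and $\theta_D$, and handle (4) by locating each basic subgroup inside an appropriate defect group using the explicit matrix form of \S\ref{subsec:ano-conju}. For (1), note that $D'_+=\prod_\Gamma R_\Gamma^{m_\Gamma}$ sits as the base group inside $D_+=\prod_{\Gamma,\beta}(R_\Gamma\wr A_{\bc_\beta})^{r_{\Gamma,\beta}}$, so $D'\unlhd D$; the centralizer inclusion $C_D\leq C'$ is immediate from the wreath structure, and $\theta_{D,+}=\prod_{\Gamma,\beta}(\theta_\Gamma\otimes I_{\ell^\beta})^{r_{\Gamma,\beta}}$ is the unique extension of $\theta'_+|_{C_D}$ compatible with the Brauer correspondence. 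Thus $(D',\theta')\unlhd(D,\theta_D)$, and the maximality together with the claim that every maximal pair of $G$ arises in this way is the content of \cite[(5K),(7F)]{FS89}.

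For (2), the canonical character $\ttheta_{D,0}$ was arranged to be the restriction of $\ttheta'_0$ along $\tC'_0\geq\tC_{D,0}$, and $\ttheta'_0$ in turn extends $\ttheta_0$ via (\ref{equ:cen-prod-tC-R}) and (\ref{equ:cen-prod-tC-D'}); combined with (1) this yields $(\tD',\ttheta')\unlhd(\tD,\ttheta_D)$ in $\tG$, and the maximality of $(\tD,\ttheta_D)$ follows because $Z(\tG)_\ell$ is central and sits in every defect group of $\tG$, as in \cite[\S7]{FS89}. Part (3) is then immediate: since by construction $\ttheta_D\in\Irr(\tC_D\mid\theta_D)$, the block $\tB=\bl(\ttheta_D)$ of $\tG$ covers $B=\bl(\theta_D)$.

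The main obstacle is part (4), where one must relate the original radical Brauer pair to the maximal defect pair. My plan is to argue factor-by-factor in the decomposition (\ref{equ:theta_+R_+}): using the matrix form of \S\ref{subsec:ano-conju}, each $R_{\Gamma,\delta,i}$ is conjugate inside $G_{\Gamma,\delta,i}$ to a subgroup of a suitable $D_{\Gamma,\beta'}$, and under restriction to centralizers the canonical character $\theta_{\Gamma,\delta,i}$ becomes an $N$-conjugate of the appropriate copy of $\theta_\Gamma\otimes I_{\ell^{\beta'}}$. Summing these embeddings and applying transitivity of Brauer-pair inclusion together with part (1) gives $(R,\theta)\leq(D,\theta_D)$. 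The conformal version $(\tR,\ttheta)\leq(\tD,\ttheta_D)$ then lifts automatically: the extensions $\ttheta_0$, $\ttheta'_0$, $\ttheta_{D,0}$ have been arranged to restrict compatibly through the central-product decompositions (\ref{equ:cen-prod-tC-R})--(\ref{equ:cen-prod-tC-D}), so the same chain of inclusions survives after multiplying every subgroup by $Z(\tG)_\ell$.
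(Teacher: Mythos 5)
Your plan for (1) and (2) is broadly in the spirit of \cite[(8A)]{FS89} (which the paper simply cites), so I will not belabor those, but parts (3) and (4) contain real gaps.

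For (3), your argument is not as immediate as you claim. Writing $\tB=\bl(\ttheta_D)$ and $B=\bl(\theta_D)$, what you actually have from $\ttheta_D\in\Irr(\tC_D\mid\theta_D)$ is that the block of $\tC_D$ containing $\ttheta_D$ covers the block of $C_D$ containing $\theta_D$ --- a \emph{local} covering. The assertion to be proved is that the \emph{global} block $\tB$ of $\tG$ covers the global block $B$ of $G$. Passing from the local covering to the global one is exactly the content of the Harris--Kn\"orr correspondence \cite{HK85}, which the paper invokes after noting that $N_{\tG}(\tD)=N_{\tG}(D)$ so that the Brauer correspondents $b$ and $\tb$ of $B,\tB$ live in the groups $N_G(D)\unlhd N_{\tG}(D)$. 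Without that step, ``$\tB$ covers $B$'' does not follow.

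For (4), your factor-by-factor plan conflates subgroup containment with inclusion of Brauer pairs. Exhibiting each $R_{\Gamma,\delta,i}$ inside a copy of some $D_{\Gamma,\beta'}$ (and transporting $\theta_{\Gamma,\delta,i}$ to a restriction of $\theta_\Gamma\otimes I_{\ell^\beta}$) gives only a containment of subgroups and a compatibility of characters at the centralizer level; it does not yet verify the block-theoretic condition defining $\unlhd$ (namely compatibility under the Brauer homomorphism of the relevant blocks), and ``summing these embeddings and applying transitivity'' glosses over that. The paper handles this by choosing a primary element $z_+\in Z(D_+)$ with $[z_+,V_+]=V_+$, noting $z_+\in Z(R_+)$, so that both $R_+C_{G_+}(R_+)$ and $D_+C_{G_+}(D_+)$ sit inside $C_{G_+}(z_+)\cong\GL$ or $\GU$, and then invoking Brou\'e's results \cite{Brou86} where Brauer-pair inclusion for $\GL/\GU$ is fully worked out. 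You would need some replacement for that reduction --- simply saying the inclusion holds componentwise does not supply the required block-theoretic verification. Finally, for the lift to $\tG$ you cite only the restriction compatibility of $\ttheta_0,\ttheta'_0,\ttheta_{D,0}$, but the paper's argument explicitly uses the central-product decomposition of Remark \ref{rem:cen-prod}(1) (the one over $\group{\tau_+^{q-1}}$, not the one you appeal to) following the pattern of \cite[(8A)]{FS89}; you should make the lifting step explicit rather than calling it automatic.
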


\begin{proof}
(1) and (2) are just \cite[(8A)]{FS89}.
For (3), note first that $N_{\tG}(\tD)=N_{\tG}(D)$.
Let $b,\tb$ be the Brauer correspondents of $B,\tB$ respectively, then by the definition, $\tb$ covers $b$.
Thus (3) follows from the Harris-Kn\"orr correspondence \cite{HK85}.
To prove $(R,\theta) \leq (D,\theta_D)$ as Brauer pairs in $G$, it suffices to prove that $(R_+,\theta_+) \leq (D_+,\theta_{D,+})$ as Brauer pairs in $G_+=\I_0(V_+)$.
Let $z_+$ be an element in $Z(D_+)$ such that (i) $o(z_+)=\ell$, (ii) $[z_+,V_+]=V_+$, (iii) $z_+$ is primary; see \cite[p.178]{FS89}.
Then by the above constructions from $R$ to $D$, $z_+\in Z(R_+)$.
So both $R_+C_{G_+}(R_+)$ and $D_+C_{G_+}(D_+)$ are contained in $C_{G_+}(z_+)$, which is isomorphic to a general linear or unitary group when $\ell$ is a linear or unitary prime respectively.
By the construction of $D$ and results of \cite{Brou86}, $(R_+,\theta_+) \leq (D_+,\theta_{D,+})$ as Brauer pairs in $C_{G_+}(z_+)$ and so as Brauer pairs in $G_+=\I_0(V_+)$; see also \cite[p.18]{An94}.
Then $(\tR,\ttheta) \leq (\tD,\ttheta_D)$ as Brauer pairs in $\tG$ follows by a similar argument as in \cite[(8A)]{FS89} using the first kind of central product decompositions in Remark \ref{rem:cen-prod}.
\end{proof}

\begin{cor}\label{cor:weights-into-blocks}
Keep the above notation and assume $(\tR,\tvarphi)$ is a weight such that $\tvarphi \in \Irr(\tN\mid\ttheta)$, then $(\tR,\tvarphi)$ belongs to the block $\tB$ in the above proposition.
\end{cor}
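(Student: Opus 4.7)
The plan is to translate the hypothesis on $\tvarphi$ into a statement about Brauer pairs and then invoke Proposition~\ref{prop:Brauerpair}(4). Since $(\tR,\ttheta)$ has already been shown there to sit below the maximal Brauer pair $(\tD,\ttheta_D)$ of $\tB$, essentially all that remains is to identify $\bl(\tvarphi)^{\tG}$ with the block of $\tG$ induced by the Brauer pair $(\tR,\ttheta)$.

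Concretely, I would proceed in three short steps. First, because $\ttheta\in\dz(\tC/Z(\tR))$, the character $\ttheta$ is the canonical character of a unique block $b$ of $\tC=C_{\tG}(\tR)$ with defect group $Z(\tR)$, so $(\tR,b)$, equivalently $(\tR,\ttheta)$, is a genuine Brauer pair of $\tG$. Second, the assumption that $(\tR,\tvarphi)$ is a weight forces $\tvarphi\in\dz(\tN/\tR)$, so $\bl(\tvarphi)$ has defect group $\tR$; combined with $\tvarphi\in\Irr(\tN\mid\ttheta)$, this implies that $\bl(\tvarphi)$ covers $b$ (the unique block of $\tC$ above which $\ttheta$ lies), and Brauer's First Main Theorem then yields $\bl(\tvarphi)^{\tG}=b^{\tG}$.

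Third, Proposition~\ref{prop:Brauerpair}(4) gives $(\tR,\ttheta)\le(\tD,\ttheta_D)$, while Proposition~\ref{prop:Brauerpair}(2) identifies $(\tD,\ttheta_D)$ as a maximal Brauer pair of $\tB$. Since containment of Brauer pairs is compatible with block induction, we conclude $b^{\tG}=\tB$, whence $\bl(\tvarphi)^{\tG}=\tB$; this is exactly the assertion that $(\tR,\tvarphi)$ is a $\tB$-weight.

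I do not expect a real obstacle: the argument is a short bookkeeping step in the Alperin--Brou\'e theory of Brauer pairs, and all of the heavy lifting has already been done in Proposition~\ref{prop:Brauerpair}. The only point that deserves a line of verification is the passage from $\tvarphi\in\Irr(\tN\mid\ttheta)$ and $\tvarphi\in\dz(\tN/\tR)$ to the statement that $\bl(\tvarphi)$ covers the block $b$ of $\tC$ with canonical character $\ttheta$; this is standard for weight characters once $\ttheta$ is canonical, and follows from the Fong--Reynolds correspondence applied to the block $b$.
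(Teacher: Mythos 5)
Your proposal is correct and matches the argument the paper intends: the corollary is stated without proof as an immediate consequence of Proposition~\ref{prop:Brauerpair}(2),(4), and your three steps simply spell out the standard Brauer-pair bookkeeping (passing from $\ttheta\in\dz(\tC/Z(\tR))$ to the block $b$ of $\tC$, identifying $\bl(\tvarphi)^{\tG}$ with $b^{\tG}$, then chaining $(\tR,\ttheta)\le(\tD,\ttheta_D)$). A minor attribution remark: the step ``$\bl(\tvarphi)$ covers $b$ with defect group $\tR$, hence $\bl(\tvarphi)^{\tG}=b^{\tG}$'' is more accurately credited to Kn\"orr's theorem on defect groups of covering blocks (or directly to the definition of containment of Brauer pairs and its compatibility with block induction) than to Brauer's First Main Theorem or Fong--Reynolds, though the substance of what you invoke is correct.
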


\subsection{}\label{subsec:block}

Let $R$ be a radical subgroup of $G$ and keep the above constructions and notation.
In \cite[\S11]{FS89}, the authors use the Brauer pair $(\tD',\ttheta')$ to give a label $(\ts,\cK)$ for the block $\tB$.
Recall that we use $\ts$ to denote a semisimple element in $\tG^*$ and denote by $s=i^*(\ts)$ the image of $\ts$ in $G^*$, while in \cite{FS89}, the corresponding notation is $s$ and $\bar{s}$; some other notation used here is also slightly different from the one in \cite{FS89}.
For convenience, we first recall the results as follows.

Since $\tC'$ is a Levi subgroup of $\tG$, $\ttheta'$ is contained in some Lusztig series $\cE(\tC',(\ts))$ with $\ts$ a semisimple $\ell'$-element in $(\tC')^* \leq \tG^*$.
This $\ts$ is the semisimple part of the label $(\ts,\cK)$.

By \cite[(11.2),(11.3)]{FS89},
\begin{equation}\label{equ:tC'_0}
\begin{aligned}
\tC'_0/\I_0(V_0) &\cong \group{\tau_+,Z(C'_+)},\\
\tC'_0/Z(C'_+) &\cong \group{\tau_0,\I_0(V_0)} = \J_0(V_0).
\end{aligned}
\addtocounter{thm}{1}\tag{\thethm}
\end{equation}
Then $\ttheta'$ can be decomposed as $\ttheta'=\tchi'\zeta'\theta'_+$, where $\tchi'$ is a character of $\J_0(V_0)$ and $\zeta'$ is an extension to $\group{\tau_+,Z(C'_+)}$ of the linear character $\zeta'_+$ of $Z(C'_+)$ induced by $\theta'_+$ and $\ttheta'_0$.
By the Jordan decomposition of characters of $\J_0(V_0)$, $\tchi'=\tchi_{\ts_0,\kappa}^{\J_0(V_0)}$ for some semisimple $\ell'$-element $\ts_0$ of $\J_0(V_0)^*$ and $\kappa=\prod_{\Gamma\in\cF'}\kappa_\Gamma$, where $\kappa_\Gamma$ is a Lusztig symbol or partition according to $\Gamma\in\cF_0$ or $\Gamma\notin\cF_0$.
But since the decomposition $\ttheta'=\tchi'\zeta'\theta'_+$ is not unique, $\kappa$ has different choices.
By the process described in \cite[\S11]{FS89}, a carefully chosen set $\cK$ of the unipotent labels $\kappa$ appearing in the label of $\tchi'$ in some decompositions $\ttheta'=\tchi'\zeta'\theta'_+$ is the unipotent label of the block $\tB$.
For symplectic groups, $|\cK|$ is one or two.

In \cite[\S11, \S13]{FS89}, the authors consider both conformal symplectic and conformal orthogonal groups.
Many of their complicated considerations are for conformal orthogonal groups.
For convenience, we list in Table \ref{tab:block} their results about conformal symplectic groups and the results in \cite{Li19} for symplectic groups.

\begin{table}\linespread{1.5}
\centering
\setlength{\tabcolsep}{5pt}
\footnotesize
\caption{Blocks of $G$ and $\tG$}

\begin{tabular}{cc|ccccc}
\\
\toprule

cases & conditions & $\cB$ & $\tcB$ & $\Irr(\tB)\cap\cE(\tG,\ell')$ \\

\midrule
       
(I) & $m_{X+1}(s)=0$
& $B_{s,\kappa}$
& $\begin{array}{c} \tB_{\ts\tz,\set{\kappa}},\\ \tz \in Z(\tG^*)_{\ell'}\\ \end{array}$
& $\begin{array}{c} \chi_{\ts\tz,\lambda},\\ \set{\kappa}=\textrm{ core of }\lambda\\ \end{array}$\\

\hline

(II) & $\begin{array}{c} \kappa_{X+1}\neq\O\\ \textrm{non-degenerate}\\ 2\rk\kappa_{X+1}=m_{X+1}(s)\\ \end{array}$
& $\begin{array}{c} B_{s,\kappa,i}, \\ i=0,1 \\ \end{array}$
& $\begin{array}{c} \tB_{\ts\tz,\set{\kappa}},\\ \tz \in Z(\tG^*)_{\ell'}\\ \end{array}$
& $\begin{array}{c} \chi_{\ts\tz,\lambda},\\ \set{\kappa}=\textrm{ core of } \lambda\\ \end{array}$ \\

\hline

(III) & $\begin{array}{c} \kappa_{X+1}\neq\O\\ \textrm{degenerate}\\ 2\rk\kappa_{X+1}=m_{X+1}(s)\\ \end{array}$
& $B_{s,\kappa}$
& $\begin{array}{c} \tB_{\ts\tz,\cK},\\ \tz \in Z(\tG^*)_{\ell'}\\ \cK=\set{\kappa}\textrm{~or~}\set{\kappa'}\\ \end{array}$
& $\begin{array}{c} \chi_{\ts\tz,\lambda},\\ \cK=\textrm{ core of } \lambda\\ \end{array}$ \\

\hline

(IV) & $\begin{array}{c} \kappa_{X+1}=\O\\ m_{X+1}(s)\neq0\\ \end{array}$
& $B_{s,\kappa}$
& $\begin{array}{c} \tB_{\ts\tz,\set{\kappa}},\\ \tz \in Z(\tG^*)_{\ell'}\\ \end{array}$
& $\begin{array}{c} \chi_{\ts\tz,\lambda},\\ \set{\kappa}=\textrm{ core of } \lambda\\ \end{array}$ \\

\hline

(V) & $\begin{array}{c} \kappa_{X+1}\neq\O\\ \textrm{non-degenerate}\\ 2\rk\kappa_{X+1}<m_{X+1}(s) \end{array}$
& $\begin{array}{c} B_{s,\kappa,i}, \\ i=0,1 \\ \end{array}$
& $\begin{array}{c} \tB_{\ts\tz,\set{\kappa}},\\ \tz \in Z(\tG^*)_{\ell'}\\ \end{array}$
& $\begin{array}{c} \chi_{\ts\tz,\lambda},\\ \set{\kappa}=\textrm{ core of } \lambda\\ \end{array}$ \\

\hline

(VI) & $\begin{array}{c} \kappa_{X+1}\neq\O\\ \textrm{degenerate} \\ 2\rk\kappa_{X+1}<m_{X+1}(s)\\ \end{array}$
& $B_{s,\kappa}$
& $\begin{array}{c} \tB_{\ts\tz,\set{\kappa,\kappa'}},\\ \tz \in Z(\tG^*)_{\ell'}\\ \end{array}$
& $\begin{array}{c} \chi_{\ts\tz,\lambda},\\ \set{\kappa,\kappa'}=\textrm{ core of } \lambda\\ \end{array}$ \\

\bottomrule
\end{tabular}\label{tab:block}
\end{table}

\begin{rem}\label{rem:block}
We give some remarks about the notation in Table \ref{tab:block}.
\begin{enumerate}[(1)]\setlength{\itemsep}{-2pt}
\item $\cB$ is a $\tG$-conjugacy class of blocks of $G$, i.e. covered by a certain block of $\tG$; $\tcB=\Bl(\tG\mid\cB)$.
	These sets satisfy the requirements in Theorem \ref{thm:criterion}.
\item Recall that for symplectic groups, degenerate Lusztig symbols are counted once.
	See \cite[Theorem 3.2, Theorem 3.7]{Li19} for notation of characters and blocks for symplectic groups.
\item For conformal symplectic groups, every degenerate Lusztig symbol is counted twice and $\kappa'$ is defined as in \S\ref{subsec:action-tz};
	see \cite[p.132]{FS89}.
	For the definition of the core of $\lambda$ in the case of conformal groups, see \cite[\S9]{FS89}.
	The definition of $e_\Gamma$ in \cite[\S9]{FS89} is the same as that before \cite[Theorem 3.7]{Li19}.
\end{enumerate}
\end{rem}

Set 
\[i\IBr(\tB_{\ts,\cK}) = \Set{ \lambda=\prod_\Gamma\lambda_\Gamma ~\middle|~ \cK \textrm{~is the core of~} \lambda }.\]
Then by \cite{Ge93} and \cite[\S13]{FS89}, the following holds.
\begin{lem}\label{lem:IBr}
There are bijections $i\IBr(\tB_{\ts,\cK}) \leftrightarrow \Irr(\tB_{\ts,\cK}) \cap \cE(\tG,\ts) \leftrightarrow \IBr(\tB_{\ts,\cK})$, denoted by $\lambda \mapsto \chi_{\ts,\lambda} \mapsto \phi_{\ts,\lambda}$.
\end{lem}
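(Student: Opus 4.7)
The statement of the lemma factors naturally into two bijections, and I would treat them in turn.

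For the first bijection $i\IBr(\tB_{\ts,\cK}) \leftrightarrow \Irr(\tB_{\ts,\cK}) \cap \cE(\tG,\ts)$, the assertion is essentially a reformulation of the last column of Table \ref{tab:block}. That column lists the members of $\Irr(\tB_{\ts\tz,\cK}) \cap \cE(\tG,\ell')$ as exactly those Jordan-decomposition labels $\chi_{\ts\tz,\lambda}$ for which $\cK$ is the core of $\lambda$ in the sense of \S\ref{subsec:partition-symbol}. Specializing to $\tz = 1$ and invoking the uniqueness part of Theorem \ref{thm:Jordan}, the map $\lambda \mapsto \chi_{\ts,\lambda}$ is then a bijection onto $\Irr(\tB_{\ts,\cK}) \cap \cE(\tG,\ts)$, whose inverse is the core--quotient decomposition. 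The one subtlety is the treatment of degenerate components $\kappa_{X+1}$ (cases (III) and (VI) of Table \ref{tab:block}): one needs to verify, using Remark \ref{rem:block}(3) and the convention introduced at the end of \S\ref{subsec:partition-symbol}, that the counting of two copies of a degenerate symbol on the character side matches the two-element unipotent label $\cK$ on the block side; this is a bookkeeping check rather than a real difficulty.

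For the second bijection $\Irr(\tB_{\ts,\cK}) \cap \cE(\tG,\ts) \leftrightarrow \IBr(\tB_{\ts,\cK})$, since $\tG = \CSp_{2n}(q)$ has connected centre and $\ts$ is $\ell$-regular (so $\cE(\tG,\ts) \subseteq \cE(\tG,\ell')$), I would invoke the theorem of Geck in \cite{Ge93}: the restriction of the decomposition matrix of $\tG$ to $\cE(\tG,\ell')$ is square, and indeed unitriangular. Combined with the partition of $\cE(\tG,\ell')$ into blocks given in \cite[\S13]{FS89} (summarised in Table \ref{tab:block}), this shows that for each block $\tB_{\ts,\cK}$ the set $\Irr(\tB_{\ts,\cK}) \cap \cE(\tG,\ell')$ is a basic set for $\IBr(\tB_{\ts,\cK})$. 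The diagonal labelling produced by the unitriangular shape then yields the required bijection $\chi_{\ts,\lambda} \mapsto \phi_{\ts,\lambda}$.

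The key compatibility to verify is that within the block $\tB_{\ts,\cK}$ the basic set $\Irr(\tB_{\ts,\cK}) \cap \cE(\tG,\ell')$ in fact coincides with the smaller set $\Irr(\tB_{\ts,\cK}) \cap \cE(\tG,\ts)$. This is where Table \ref{tab:block} plays the crucial role again: in every one of the cases (I)--(VI), the ordinary characters of $\tB_{\ts,\cK}$ lying in $\cE(\tG,\ell')$ come from the single orbit $\{\ts\tz : \tz \in Z(\tG^*)_{\ell'}\}$, and after identifying a fixed representative this reduces exactly to the series $\cE(\tG,\ts)$. I expect this identification to be the main (though still bookkeeping-level) obstacle, since it requires tracking the $Z(\tG^*)_{\ell'}$-action on labels spelled out in Remark \ref{rem:action-tz} through the block parametrisation. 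Once it is in place, composing the two bijections gives the stated correspondence $\lambda \mapsto \chi_{\ts,\lambda} \mapsto \phi_{\ts,\lambda}$.
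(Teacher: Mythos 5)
Your overall route --- read off the first bijection from Table~\ref{tab:block} and the second from a basic set result --- is the paper's own: the text simply cites \cite{Ge93} and \cite[\S13]{FS89} for this lemma, and your first and second bijections correspond to those two citations. However, two of your intermediate claims are off.

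First, you assert that by Geck's theorem the decomposition matrix restricted to $\cE(\tG,\ell')$ is ``square, and indeed unitriangular.'' That is not what \cite{Ge93} proves. Geck shows $\cE(\tG,\ell')$ is a \emph{basic set}, i.e.\ the restricted decomposition matrix is square and invertible over $\Z$; unitriangularity is strictly stronger and is precisely the extra hypothesis this paper imposes in Theorems~\ref{mainthm-2} and~\ref{mainthm-3}. If unitriangularity were available from \cite{Ge93}, those hypotheses would be vacuous. For the present lemma this does not sink your argument, because existence of \emph{some} bijection only needs equality of cardinalities, which the basic set result plus the block decomposition from \cite[\S13]{FS89} already gives; but you should not label the bijection as coming from a triangular shape that you cannot claim here.

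Second, the identification $\Irr(\tB_{\ts,\cK})\cap\cE(\tG,\ell') = \Irr(\tB_{\ts,\cK})\cap\cE(\tG,\ts)$ that you flag as the ``main obstacle'' and propose to track via the $Z(\tG^*)_{\ell'}$-action on labels is both easier and organized differently than you describe. By the Brou\'e--Michel partition, all of $\Irr(\tB_{\ts,\cK})$ lies in the union of Lusztig series $\cE(\tG,\ts\tz)$ with $\tz$ an $\ell$-element of $Z(\tG^*)$ (and more generally $\ts\tz$ with $\tz$ an $\ell$-element commuting with $\ts$), so the only $\ell'$-series that can meet $\Irr(\tB_{\ts,\cK})$ is $\cE(\tG,\ts)$ itself. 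The $\tz\in Z(\tG^*)_{\ell'}$ appearing in Table~\ref{tab:block} index the \emph{distinct} covering blocks $\tB_{\ts\tz,\cK}\in\tcB$, not distinct $\ell'$-series inside a single block, so ``tracking'' that orbit is not the mechanism behind the identification you need.
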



\section{The blockwise Alperin weight conjecture for $\tG$}\label{sec:weights}

In this section, we classify the weights of $\tG$ and prove the blockswise Alperin weight conjecture for $\tG$.

\subsection{}\label{subsec:N(ttheta)}
Let $(\tR,\tvarphi)$ be a weight of $\tG$ belonging to the block $\tB$ with label $(\ts,\cK)$ as above.
Then $\tvarphi$ is of the form $\tvarphi=\Ind_{\tN(\ttheta)}^{\tN}\tpsi$, where $\ttheta\in\dz(\tC/Z(\tR))$ with $(\tR,\ttheta)$ a $\tB$-Brauer pair and $\tpsi\in\dz(\tN(\ttheta)/\tR\mid\ttheta)$.
When $\ttheta$ runs over a set of representatives of $\tN$-conjugacy classes of $\dz(\tC/Z(\tR))$, the above construction gives all weight characters associated with $\tR$.
Thus we need to consider the $\tN$-conjugacy classes of $\ttheta$ and $\tN(\ttheta)$.

We first show how a result concerning $R_{\Gamma,\gamma}$ can be derived from \cite[(6A)]{FS89} which deals only with $R_\Gamma$.
But we will state and prove it in the twisted group $G_{\Gamma,\gamma}^{tw}$.
Here, recall that for all related constructions, we replace the two parameters $m_\Gamma,\alpha_\Gamma$ by $\Gamma$.
Let $s_\Gamma$ and $s_\Gamma^*$ be as in \cite[\S6.B]{Li19}.
Let $s_\Gamma^0, \theta_\Gamma^0 = \pm R_{T_\Gamma^0}^{C_\Gamma^0} \hat{s_\Gamma^0}, \theta_{\Gamma,\gamma}^0 = \theta_\Gamma^0 \otimes I_{\ell^\gamma}, \theta_{\Gamma,\gamma}^{tw}$ be as in \cite[\S6.C]{Li19}.
Set $\phi_\Gamma^0 = \hat{s_\Gamma^0}$ and denote by $\phi_\Gamma^{tw}$ the induced character on $Z(C_{\Gamma,\gamma}^{tw}) = \hbar(Z(C_\Gamma^0) \otimes I_{\ell^\gamma}) = \hbar(\fZ_{q^{e\ell^{\alpha_\Gamma}}-\varepsilon}I_{m_\Gamma\ell^\gamma})$, where $\fZ_{q^{e\ell^{\alpha_\Gamma}}-\varepsilon}$ is the cyclic subgroup of $\barF_p^\times$ of order $q^{e\ell^{\alpha_\Gamma}}-\varepsilon$.

\begin{lem}\label{lem:phi-Gamma}
Let $g\in N_{\Gamma,\gamma}^{tw}(\theta_{\Gamma,\gamma}^{tw})$ and $z=[\tau_{\Gamma,\gamma}^{tw},g]$, then $z\in Z(C_{\Gamma,\gamma}^{tw})$ and the following hold:
\begin{enumerate}[(1)]\setlength{\itemsep}{0pt}
\item $\phi_\Gamma^{tw}(z)=\pm1$.
\item If $\Gamma\neq X+1$, then $\phi_\Gamma^{tw}(z)=1$.
\item Assume $\Gamma=X+1$, then $\phi_{X+1}^{tw}(z)=1$ or $-1$
	according to whether $g$ is a square or a non-square in $N_{\Gamma,\gamma}^{tw}/\hbar(N_{\Gamma,\gamma}^0)$.
\end{enumerate}
\end{lem}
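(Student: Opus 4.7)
The plan is to reduce the lemma to the case $\gamma=0$, which is essentially \cite[(6A)]{FS89}, by transporting everything through the hyperbolic embedding $\hbar$ and exploiting the tensor-product decomposition $R_{\Gamma,\gamma}^{0}=R_\Gamma^{0}\otimes E_\gamma$ (with $E_\gamma$ extraspecial), noting that the character $\theta_{\Gamma,\gamma}^{0}=\theta_\Gamma^{0}\otimes I_{\ell^\gamma}$ factorises along this decomposition.

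First I would verify that $z\in Z(C_{\Gamma,\gamma}^{tw})$. Since $\tau_{\Gamma,\gamma}^{tw}$ centralises $R_{\Gamma,\gamma}^{tw}$ and $g$ normalises it, the commutator $z$ lies in $C_{\Gamma,\gamma}^{tw}=\hbar(\GL(m_\Gamma\ell^\gamma,\varepsilon q^{e\ell^{\alpha_\Gamma}}))$. Pulling through $\hbar^{-1}$ and using the Kronecker factorisation $C_{\Gamma,\gamma}^{0}\cong C_\Gamma^{0}\otimes I_{\ell^\gamma}$, the element $\tau$ acts trivially on the $E_\gamma$ component and only non-trivially on the $C_\Gamma^{0}$ component, so $z$ arises from a scalar element of $C_\Gamma^{0}$; in particular $z\in Z(C_{\Gamma,\gamma}^{tw})$. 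For part (1), applying $g$-conjugation to the $\tau$-invariance of $\theta_{\Gamma,\gamma}^{tw}$ and using the stability $g\cdot\theta_{\Gamma,\gamma}^{tw}=\theta_{\Gamma,\gamma}^{tw}$, the scalar $\phi_\Gamma^{tw}(z)$ by which $\tau$-translation acts on $\theta_{\Gamma,\gamma}^{tw}$ must satisfy $\phi_\Gamma^{tw}(z)^{2}=1$, by the same symmetry argument as in \cite[(6A)]{FS89}.

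For part (2), when $\Gamma\neq X+1$, I would appeal to \cite[(6A)]{FS89} on the $C_\Gamma^{0}$ factor. The key input is that the semisimple element $s_\Gamma^{0}$ has minimal polynomial $\Gamma$ with $\Gamma(-1)\neq0$, so $\hat{s_\Gamma^{0}}$ evaluates to $1$ on the relevant commutator in $Z(C_\Gamma^{0})$. Since tensoring with $I_{\ell^\gamma}$ does not alter the evaluation of $\phi_\Gamma^{tw}$ on an element of $Z(C_\Gamma^{0})\otimes I_{\ell^\gamma}$, the sign remains $+1$.

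The main content, and what I expect to be the main obstacle, is part (3). For $\Gamma=X+1$, I would first verify that the quotient $N_{\Gamma,\gamma}^{tw}/\hbar(N_{\Gamma,\gamma}^{0})$ is cyclic of order $2$ using \cite[Lemma 6.6]{Li19} and the Remark in \S\ref{subsec:radical}, with the non-trivial coset represented by a specific element $v\in V_{m,\alpha,\gamma}^{tw}$. For $g\in\hbar(N_{\Gamma,\gamma}^{0})$, a direct calculation (already handled in \cite[(6A)]{FS89} for $\gamma=0$ and unchanged by the tensor with $I_{\ell^\gamma}$) shows that $\tau_{\Gamma,\gamma}^{tw}$ and $g$ commute modulo $\ker\phi_{X+1}^{tw}$, so $\phi_{X+1}^{tw}(z)=1$; thus the sign depends only on the coset of $g$. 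The crucial step is then to verify that $[\tau_{\Gamma,\gamma}^{tw},v]=\hbar(-I_{m_\Gamma\ell^\gamma})\in Z(C_{X+1,\gamma}^{tw})$, upon which $\hat{s_{X+1}^{0}}=\widehat{-I}$ takes the value $-1$. I would carry this out using the explicit forms of $\tau_{\Gamma,\gamma}^{tw}$ and $v$ from \cite[(6.3)]{Li19} and \cite[Lemma 6.6]{Li19}, taking care with how $\hbar$ and the twisting isomorphism $\iota$ interact with the square-class structure of $N_{\Gamma,\gamma}^{tw}$.
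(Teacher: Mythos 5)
Your proposal follows the same overall reduction as the paper: establish that the commutator lands in $Z(C_{\Gamma,\gamma}^{tw})$, use the tensor factorisation $R_{\Gamma,\gamma}^{0}=R_\Gamma^{0}\otimes E_\gamma$ (and the corresponding factorisation of $\theta_{\Gamma,\gamma}^{0}$ and $\tau_{\Gamma,\gamma}^{tw}$) to reduce to $\gamma=0$, and then invoke \cite[(6A)]{FS89}. The paper stops there; its entire proof is that reduction. Where you diverge is in attempting to \emph{reconstruct} the content of \cite[(6A)]{FS89} rather than simply citing it, and in doing so you introduce a structural claim that is not correct as stated: you assert that $N_{\Gamma,\gamma}^{tw}/\hbar(N_{\Gamma,\gamma}^{0})$ is cyclic of order $2$. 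By the Remark in \S\ref{subsec:radical} and \cite[Lemma 6.6]{Li19}, $N_{m,\alpha,\gamma}^{tw}=\hbar(N_{m,\alpha,\gamma}^{0})V_{m,\alpha,\gamma}^{tw}$ with intersection $Z(G_{m,\alpha,\gamma}^{tw})$, so the quotient is $V_{m,\alpha,\gamma}^{tw}/Z(G_{m,\alpha,\gamma}^{tw})$, a cyclic group whose order depends on $e$ and $\alpha$ and is in general larger than $2$. This is precisely why the statement of part (3) is phrased in terms of squares and non-squares in that quotient rather than ``trivial/non-trivial coset.'' Your closing computation $[\tau_{\Gamma,\gamma}^{tw},v]=\hbar(-I)$ on a generator $v$ of the cyclic quotient would still recover the correct sign character on the quotient (since $g\mapsto\phi_\Gamma^{tw}([\tau_{\Gamma,\gamma}^{tw},g])$ descends to a homomorphism on it), so the underlying idea is salvageable, but as written the order-2 claim is wrong and makes the dichotomy look simpler than it is. The cleaner and shorter route, which the paper takes, is to observe that $[\tau_{\Gamma,\gamma}^{tw},N_{\Gamma,\gamma}^{tw}]=[\tau_{\Gamma,\gamma}^{tw},V_{\Gamma,\gamma}^{tw}]\leq Z(C_{\Gamma,\gamma}^{tw})$, reduce to $\gamma=0$ by the explicit structure of $\tau_{\Gamma,\gamma}^{tw}$ and $Z(C_{\Gamma,\gamma}^{tw})$, and then cite \cite[(6A)]{FS89} for all three parts rather than re-deriving its case analysis.
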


\begin{proof}
Recall from \cite[Lemma 6.6]{Li19} that $N_{\Gamma,\gamma}^{tw} = \hbar(N_{\Gamma,\gamma}^0) V_{\Gamma,\gamma}^{tw}$, where $N_{\Gamma,\gamma}^0=C_{\Gamma,\gamma}^0L_{\Gamma,\gamma}^0$ is a central product and $V_{\Gamma,\gamma}^{tw}$ is as in \cite[Lemma 6.6 (2)]{Li19}.
Also, $\tau_{\Gamma,\gamma}^{tw}$ is as \cite[(6.3)]{Li19}, then by the definition of $\hbar$ in \cite[\S6.A]{Li19}, $[\tau_{\Gamma,\gamma}^{tw},N_{\Gamma,\gamma}^{tw}] = [\tau_{\Gamma,\gamma}^{tw},V_{\Gamma,\gamma}^{tw}] \leq Z(C_{\Gamma,\gamma}^{tw})$.
By the structure of $\tau_{\Gamma,\gamma}^{tw}$ and $Z(C_{\Gamma,\gamma}^{tw})$, there is no loss to assume $\gamma=0$ and thus this lemma follows from \cite[(6A)]{FS89}.
\end{proof}

As for defect groups in \cite[\S7]{FS89}, for any radical subgroup $\tR$, we have $\tN(\ttheta)=\group{\tau,N(\ttheta)}$ and
\[N(\ttheta) = N(\ttheta_0) \cap N(\theta_+), \quad N(\theta_+) \leq N(\zeta_+),\]
where $\zeta_+$ is the linear character of $Z(C_+)$ induced by $\ttheta_0$ and $\theta_+$.
Similarly as in \cite[(7B)]{FS89}, it is easy to see from Equation (\ref{equ:theta_+R_+}) that
\[N(\theta_+) = I_0(V_0) \times \prod_{\Gamma,\delta,i} N_{\Gamma,\delta,i}(\theta_{\Gamma,\delta,i})\wr\fS(t_{\Gamma,\delta,i}).\]
Note that the notation here is slightly different from the one in \cite{FS89}; in particular, $\gamma$ in \cite{FS89} and here have different meanings.
For $g \in N$, the function $\omega_g$ on $\tC_0$ is defined in the same way as in \cite[(7.5)]{FS89}.
Then the conclusions of \cite[(7D), (7E)]{FS89} for defect groups hold for any radical subgroups from Lemma \ref{lem:phi-Gamma}.
In particular, for any $g \in N(\theta_+)$, $\omega_g$ is a linear character of $\tC_0/C_0Z(C_+)$ of order $1$ or $2$.
\emph{In the sequel, when we refer to \cite[(7D), (7E)]{FS89}, we mean the generalized results for all radical subgroups.}
For conformal symplectic groups, \cite[(7F)]{FS89} can be simplified as follows.

\begin{lem}\label{lem:N(ttheta)}
Keep the above notation, then
\[N(\ttheta) = \set{ g \in N(\theta_+) \mid \ttheta_0\omega_g=\ttheta_0 }\]
and $N(\ttheta)$ is a normal subgroup of $N(\theta_+)$ of index $1$ or $2$.
\end{lem}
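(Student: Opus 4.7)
The plan is to follow the template of Fong--Srinivasan's (7F) in \cite{FS89}, simplified by the fact that for conformal symplectic groups we avoid the orthogonal-type complications present there. The statement splits into three ingredients: the inclusion $N(\ttheta)\subseteq N(\theta_+)$, the characterization of $N(\ttheta)$ inside $N(\theta_+)$ via the condition $\ttheta_0\omega_g=\ttheta_0$, and the index bound.

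First I would establish $N(\ttheta)\subseteq N(\theta_+)$. By Lemma \ref{lem:cen-prod}(2), any $g\in N$ normalizes both $\tC_0$ and $C_+$ and thus stabilizes the central product decomposition $\tC=\tC_0\cdot C_+$. Since $\ttheta=\ttheta_0\theta_+$ in this central product, $\theta_+$ is recovered (up to a scalar) from $\ttheta|_{C_+}$, so any $g$ fixing $\ttheta$ must also fix $\theta_+$. Next, for $g\in N(\theta_+)$ I would invoke the generalized form of \cite[(7D), (7E)]{FS89}, valid for all radical subgroups by Lemma \ref{lem:phi-Gamma}, to conclude that conjugation by $g$ sends $\ttheta_0$ to $\ttheta_0\omega_g$, where $\omega_g$ is a linear character of $\tC_0/C_0Z(C_+)$ of order dividing $2$. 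Combined with the fact that $g$ fixes $\theta_+$, this gives $\ttheta^g=\ttheta_0\omega_g\cdot\theta_+$, so $\ttheta^g=\ttheta$ is equivalent to $\ttheta_0\omega_g=\ttheta_0$, yielding the desired description.

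To bound the index, I would check that $g\mapsto\omega_g$ is a genuine group homomorphism from $N(\theta_+)$ into the character group of $\tC_0/C_0Z(C_+)$. The key observation is that by Lemma \ref{lem:cen-prod}(2), $[\tau,g]\in C_0Z(C_+)$ for every $g\in N$, so $N$ acts trivially on the cyclic quotient $\tC_0/C_0Z(C_+)\cong\J_0(V_0)/\I_0(V_0)$ of order $q-1$; this triviality is precisely what forces the usual cocycle relation $\omega_{gh}=\omega_h\cdot\omega_g^h$ to collapse to the homomorphism identity. The image of $g\mapsto\omega_g$ then lies in the subgroup of $2$-torsion characters of a cyclic group of order $q-1$, which has order exactly two since $q$ is odd. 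Consequently the condition $\ttheta_0\omega_g=\ttheta_0$ cuts out the kernel of a homomorphism $N(\theta_+)\to\set{\pm 1}$, so $N(\ttheta)$ is normal in $N(\theta_+)$ of index $1$ or $2$.

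The main technical obstacle will be verifying that $g\mapsto\omega_g$ is a genuine homomorphism rather than merely a $1$-cocycle; this hinges on the triviality of the conjugation action on the abelian quotient $\tC_0/C_0Z(C_+)$ noted above, together with the fact that $\omega_g$ is a class function, so $\omega_g^h=\omega_g$. Once that is in place, the remaining arguments are routine Clifford-theoretic manipulations using the central product structure already assembled in \S\ref{subsec:Brauerpair} and \S\ref{subsec:N(ttheta)}.
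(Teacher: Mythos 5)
Your proof is correct and follows essentially the same route as the paper's one-line citation of \cite[(7F)]{FS89}, whose key symplectic simplification (that $g_0\in\I_0(V_0)$ in the decomposition $g=g_0g_+$, so ${}^{g_0}\ttheta_0=\ttheta_0$) you invoke implicitly through the generalized (7D), (7E). The extra detail you supply---the homomorphism property of $g\mapsto\omega_g$ obtained from the trivial $N(\theta_+)$-action on the cyclic quotient $\tC_0/C_0Z(C_+)$ (which, rather than the mere fact that $\omega_g$ is a class function of $\tC_0$, is what really forces ${}^h\omega_g=\omega_g$ for $h\in N(\theta_+)$), and the observation that the $2$-torsion of $\Irr(\tC_0/C_0Z(C_+))$ has order two---is precisely what the (7F) argument amounts to in this setting, so the two proofs differ only in the level of explicitness.
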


\begin{proof}
The proof is the same as for \cite[(7F)]{FS89}, noting that for symplectic groups, $g_0$ in the decomposition $g=g_0g_+$ satisfies $g_0\in\I_0(V_0)$ and thus ${^{g_0}\ttheta_0}=\ttheta_0$.
\end{proof}

\begin{prop}\label{prop:N(theta+):N(ttheta)}
Keep the above notation and let cases (I)$\sim$(VI) be as in Table \ref{tab:block}.
\[|N(\theta_+):N(\ttheta)|=
\left\{\begin{array}{ll}
1, & \textrm{cases (I)$\sim$(III), (V);}\\
2, & \textrm{cases (IV), (VI).}
\end{array}\right.\]
\end{prop}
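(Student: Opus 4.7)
The plan is to apply Lemma~\ref{lem:N(ttheta)}, which already tells us the index is either $1$ or $2$ and equals $2$ precisely when there exists $g\in N(\theta_+)$ with $\ttheta_0\omega_g\neq\ttheta_0$. By Lemma~\ref{lem:phi-Gamma}, $\omega_g$ can be non-trivial only when $g$ has a non-square projection into some factor $N_{X+1,\delta,i}/\hbar(N_{X+1,\delta,i}^0)$. So the first step is to determine in which of the six cases the radical $R_+$ actually contains a basic subgroup associated with $\Gamma=X+1$. In Case~(I), $m_{X+1}(s)=0$ already removes all $X+1$-parts; in Cases~(II) and~(III) the equality $2\rk\kappa_{X+1}=m_{X+1}(s)$ together with the relation between $m_{X+1}(s)$, $m_{X+1}(\ts_0)$ and the quotient part of $\lambda_{X+1}$ coming from Table~\ref{tab:block} forces the entire $X+1$-contribution of $s$ to be carried by $V_0$, so no $X+1$ basic subgroup appears in the decomposition~(\ref{equ:theta_+R_+}) of $R_+$. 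Thus in Cases~(I)--(III), every $\omega_g$ is trivial and the index equals~$1$.

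For Cases~(IV), (V), (VI) the multiplicity $m_{X+1}(s)$ exceeds $2\rk\kappa_{X+1}$, so $R_+$ has at least one factor $R_{X+1,\delta,i}^{t_{X+1,\delta,i}}$ with $t_{X+1,\delta,i}>0$. Using the structure of $N_{X+1,\delta,i}^{tw}/\hbar(N_{X+1,\delta,i}^0)$ from \cite[Lemma~6.6]{Li19} together with Lemma~\ref{lem:phi-Gamma}(3), one exhibits a $g\in N(\theta_+)$ whose projection to such a factor is a non-square. Then $\omega_g$ is a linear character of $\tC_0/C_0 Z(C_+)$ of order $2$, and since this cyclic quotient admits only one such character, the choice of $g$ is irrelevant. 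Writing $\ttheta_0=\tchi\zeta$ with $\tchi=\tchi_{\ts_0,\kappa}$ via the isomorphisms~(\ref{equ:tC'_0}), and observing that the pullback of $\omega_g$ to $\J_0(V_0)\cong\tC_0/Z(C_+)$ is the unique non-trivial character of $\J_0(V_0)/\I_0(V_0)$, which under duality corresponds to the element $-e\in Z(\J_0(V_0)^*)$, we obtain $\ttheta_0\omega_g=\ttheta_0$ if and only if $\widehat{-e}\cdot\tchi_{\ts_0,\kappa}=\tchi_{\ts_0,\kappa}$.

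The conclusion then follows from the explicit description of the $\widehat{-e}$-action in Remark~\ref{rem:action-tz}: this action fixes $\tchi_{\ts_0,\kappa}$ if and only if $m_{X+1}(\ts_0)\neq 0$ and $\kappa_{X+1}$ is non-degenerate. In Case~(IV), $\kappa_{X+1}=\emptyset$ forces $m_{X+1}(\ts_0)=0$, so $\widehat{-e}\tchi\neq\tchi$ and the index is $2$; in Case~(V), $\kappa_{X+1}$ is non-empty and non-degenerate with $m_{X+1}(\ts_0)>0$, so $\widehat{-e}\tchi=\tchi$ and the index is $1$; in Case~(VI), $\kappa_{X+1}$ is degenerate with $m_{X+1}(\ts_0)>0$, so $\widehat{-e}\tchi=\tchi_{\ts_0,\kappa'}\neq\tchi$ and the index is $2$.

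The main obstacle is the middle step: correctly identifying the non-trivial $\omega_g$ produced by Lemma~\ref{lem:phi-Gamma} with the pullback of $\widehat{-e}$ on $\J_0(V_0)$ under the isomorphisms of~(\ref{equ:tC'_0}), so that the computations of Remark~\ref{rem:action-tz} become directly applicable. Producing an explicit non-square element in each relevant $N_{X+1,\delta,i}/\hbar(N_{X+1,\delta,i}^0)$ should be a routine translation of the twisted-group constructions in \cite[\S6]{Li19}, while the numerical relation $2\rk\kappa_{X+1}\leq m_{X+1}(\ts_0)\leq m_{X+1}(s)$ used to rule out $X+1$ basic subgroups in Cases~(I)--(III) is built into the description of block labels recalled in Table~\ref{tab:block}.
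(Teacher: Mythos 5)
Your proposal is correct and takes essentially the same route as the paper: use Lemma~\ref{lem:N(ttheta)} to reduce the question to whether some $\omega_g$ moves $\ttheta_0$, observe that in cases (I)--(III) the $X+1$-contribution of $s$ lies entirely in $V_0$ so that (the generalization of) \cite[(7E)]{FS89} forces $\omega_g=1$ for all $g\in N(\theta_+)$, and in cases (IV)--(VI) identify the order-$2$ character $\omega_g$ with the pullback of $\widehat{-e}$ on $\J_0(V_0)$ and then invoke Remark~\ref{rem:action-tz} applied to $\tchi_0=\chi_{\ts_0,\kappa}^{\J_0(V_0)}$. The only differences are cosmetic: you re-derive the criterion ``$\omega_g\neq 1$ for some $g$ iff $R_+$ has an $X+1$-factor'' directly from Lemma~\ref{lem:phi-Gamma}, whereas the paper has already packaged this in \S\ref{subsec:N(ttheta)} as the generalized \cite[(7E)]{FS89} and simply cites it, and you make the identification $\omega_g\leftrightarrow\widehat{-e}$ explicit where the paper leaves it to the reader. (A small slip: you call $\omega_g$ ``the unique non-trivial character of $\J_0(V_0)/\I_0(V_0)$''; it is the unique character of order $2$ of that cyclic group of order $q-1$, as you in fact use one sentence earlier.)
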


Note that
\begin{equation*}\label{equ:tC_0}
\begin{aligned}
\tC_0/\I_0(V_0) &\cong \group{\tau_+,Z(C_+)},\\
\tC_0/Z(C_+) &\cong \group{\tau_0,\I_0(V_0)} = \J_0(V_0).
\end{aligned}
\addtocounter{thm}{1}\tag{\thethm}
\end{equation*}
Then similarly as on \cite[p.169]{FS89}, $\ttheta_0$ can be decomposed as $\ttheta_0 = \tchi_0\zeta$, where $\tchi_0$ is a character of $\J_0(V_0)$ and $\zeta$ is an extension to $\group{\tau_+,Z(C_+)}$ of the linear character $\zeta_+$ induced by $\theta_+$ and $\ttheta_0$.
Fix a choice $\zeta$ of the extension, then all possible choices are $\zeta\omega$ with $\omega \in \Irr(\J_0(V_0)/\I_0(V_0))$.

\begin{rem}\label{rem:zeta}
From now on, we fix an extension $\zeta$ of $\zeta_+$ to $\group{\tau_+,Z(C_+)}$, then $\tchi_0$ and $\ttheta_0$ determine each other since $\zeta$ is a linear character.
\end{rem}

\begin{proof}[Proof of Proposition \ref{prop:N(theta+):N(ttheta)}]
$s=i^*(\ts)$ can be decomposed as $s=s_0s_+$, where $s_0\in\I_0(V_0)$ and $s_+\in\I_0(V_+)$ can be determined by $\theta_0$ and $\theta_+$ respectively; see \cite[\S6.B]{Li19}.
Since $(\tR,\ttheta)$ is a $\tB$-Brauer pair and $\tB$ has label $(\ts,\cK)$, then by Proposition \ref{prop:Brauerpair} and the results in \cite[\S11]{FS89}, $\tchi_0$ can be chosen to be of the form $\tchi_0 = \chi_{\ts_0,\kappa}^{\J_0(V_0)}$, where $\ts_0\in\D_0(V_0^*)$ satisfies $s_0=i^*(\ts_0)$ and $\kappa\in\cK$.
By Lemma \ref{lem:N(ttheta)} and Remark \ref{rem:zeta}, $N(\ttheta) = \set{ g \in N(\theta_+) \mid \tchi_0=\tchi_0\omega_g }$.
By \cite[(7E)]{FS89}, $\omega_g\neq1$ for some $g \in N(\theta_+)$ if and only if $m_{X+1}(s_+)\neq0$.
For cases (I)$\sim$(III), we then have $\omega_g=1$ for all $g \in N(\theta_+)$, thus $|N(\theta_+):N(\ttheta)|=1$.
For cases (IV)$\sim$(VI), $\omega_g\neq1$ for some $g \in N(\theta_+)$, then the assertion follows from Remark \ref{rem:action-tz}.
\end{proof}

Recall that the $\tN$-conjugacy class of $\ttheta$ is the same as the $N$-conjugacy class of $\ttheta$.
The following is essentially in the spirit of \cite[\S11]{FS89}.

\begin{cor}\label{cor:cc-cano-chars}
Let $(\tR,\tvarphi)$ be a weight of $\tG$ belonging to the block $\tB$ with label $(\ts,\cK)$, where $\tvarphi$ lies over some $\ttheta\in\dz(\tC/Z(\tR))$ such that $(\tR,\ttheta)$ is a $\tB$-Brauer pair.
Let $\theta_+$ be as in (\ref{equ:theta_+R_+}).
Let $\Cl_{\tN}(\ttheta)$ be the $\tN$-conjugacy class of $\ttheta$.
Denote $\Cl_{N(\theta_+)}(\ttheta) = \set{ \ttheta_1\in \Cl_{\tN}(\ttheta) \mid \ttheta_1=\ttheta_{1,0}\theta_{1,+}, \theta_{1,+}=\theta_+ }$.
Then the set
\[ \Set{ \kappa \mid \ttheta_1=\ttheta_{1,0}\theta_+\in \Cl_{N(\theta_+)}(\ttheta), \ttheta_{1,0} \textrm{~induces~} \chi_{\ts_0,\kappa}^{\J_0(V_0)} } \]
coincides with $\cK$.
\end{cor}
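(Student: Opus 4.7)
The plan is to enumerate $\Cl_{N(\theta_+)}(\ttheta)$ and compute the Jordan label of $\tchi_{1,0}$ for each representative, then compare with the data for $\cK$ recorded case by case in Table \ref{tab:block}. Since $\tau$ centralizes $\tC$, the $\tN$-orbit of $\ttheta$ coincides with its $N$-orbit, and those elements in it whose $+$-part equals $\theta_+$ form a single $N(\theta_+)$-orbit of size $[N(\theta_+):N(\ttheta)]$. By Proposition \ref{prop:N(theta+):N(ttheta)}, this index is $1$ in cases (I)--(III) and (V) and $2$ in cases (IV), (VI), giving a first crude count that must be matched to $|\cK|$.

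Next I would track how $\tchi_{1,0}$ transforms as $\ttheta_1={}^g\ttheta$ ranges over $\Cl_{N(\theta_+)}(\ttheta)$. Fixing the extension $\zeta$ as in Remark \ref{rem:zeta}, the generalized form of \cite[(7D), (7E)]{FS89} gives ${}^g\ttheta_0=\ttheta_0\omega_g$, where $\omega_g$ is a character of $\tC_0/C_0Z(C_+)\cong\J_0(V_0)/\I_0(V_0)$ of order at most $2$. The unique non-trivial such character corresponds under duality to $-e\in Z(\J_0(V_0)^*)$, so applying the analog of Remark \ref{rem:action-tz} to $\J_0(V_0)$ in place of $\tG$, multiplication by a non-trivial $\omega_g$ sends $\chi_{\ts_0,\kappa}^{\J_0(V_0)}$ to itself when $\kappa_{X+1}$ is non-degenerate or empty, and to $\chi_{\ts_0,\kappa'}^{\J_0(V_0)}$ when $\kappa_{X+1}$ is degenerate and non-empty.

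Matching the two preceding paragraphs against Table \ref{tab:block} closes the argument. In cases (I)--(III) and (V) the orbit is a singleton and $|\cK|=1$, so the sets trivially coincide. In case (IV) the orbit has size $2$ but $\kappa_{X+1}=\emptyset$ forces $\kappa'=\kappa$ under the convention that the empty symbol is counted only once, so both orbit representatives give rise to the same label, matching $\cK=\set{\kappa}$. In case (VI), $\kappa_{X+1}$ is degenerate and non-empty, so the two representatives give rise to the labels $\kappa$ and $\kappa'$ respectively, matching $\cK=\set{\kappa,\kappa'}$.

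The hard part will be the middle step: one must pin down that the order-$2$ character $\omega_g$ is indeed the pullback of $\widehat{-e}$ via the isomorphisms in \eqref{equ:tC_0}, and more broadly transport the Jordan-decomposition formalism together with the centre-of-dual action of Remark \ref{rem:action-tz} from $\tG$ down to the smaller conformal symplectic group $\J_0(V_0)$. Once this identification is confirmed, the remainder is routine bookkeeping against Table \ref{tab:block}.
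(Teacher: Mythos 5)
Your overall strategy --- enumerate the $N(\theta_+)$-orbit of $\ttheta$, read off the Jordan labels of the $0$-parts via the fixed $\zeta$ of Remark \ref{rem:zeta}, and compare with Table \ref{tab:block} --- is exactly what the paper's citation of the proof of Proposition \ref{prop:N(theta+):N(ttheta)} is pointing to, and your treatments of cases (I)--(III), (V), (VI) are sound. But the middle step is wrong in case (IV), and the error would in fact contradict the very proposition you cite.

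You assert that a nontrivial $\omega_g$ fixes $\chi_{\ts_0,\kappa}^{\J_0(V_0)}$ when $\kappa_{X+1}=\emptyset$, arguing that $\emptyset$ is degenerate but counted only once so that $\kappa'=\kappa$. This applies branch (2)(b) of Remark \ref{rem:action-tz} in a setting where branch (1) is in force. The dichotomy there is governed \emph{first} by whether $m_{X+1}(s_0)$ vanishes; degeneracy of $\kappa_{X+1}$ is only the tie-breaker in the non-vanishing case. Since $\kappa_{X+1}=\emptyset$ means precisely that $m_{X+1}(s_0)=0$, branch (1) gives a \emph{regular} action of $\Irr(\J_0(V_0)/\I_0(V_0))$ on $\set{\chi_{\tz\ts_0,\kappa}}$, so the nontrivial $\omega_g=\widehat{-e}$ sends $\chi_{\ts_0,\kappa}$ to $\chi_{-\ts_0,\kappa}\neq\chi_{\ts_0,\kappa}$: it is the \emph{semisimple} label that changes, not $\kappa$. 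Were your claim correct, $N(\ttheta)=\set{g\in N(\theta_+)\mid\tchi_0=\tchi_0\omega_g}$ would equal all of $N(\theta_+)$, forcing index $1$ in case (IV), contradicting Proposition \ref{prop:N(theta+):N(ttheta)}. The proof of Lemma \ref{lem:(3)of(iii)} makes the correct picture explicit: in case (IV) there exists $g_+\in N_+(\theta_+)\setminus N_+(\ttheta)$ with $\chi_{\ts,\kappa}^{g_+}=\chi_{-\ts,\kappa}$.

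The corollary's conclusion in case (IV) nonetheless survives, but via a different mechanism than the one you give. Of the two orbit representatives, only the one whose $0$-part has semisimple label $\ts_0$ (rather than $-\ts_0$) matches the form $\chi_{\ts_0,\cdot}^{\J_0(V_0)}$ demanded in the statement, so only that one contributes a label, namely $\kappa$, and the set is $\set{\kappa}=\cK$. It is the filtering by the fixed semisimple label $\ts_0$, not a coincidence $\kappa'=\kappa$, that collapses the two-element orbit to a one-element label set; this step should be repaired.
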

\begin{proof}
By the proof of Proposition \ref{prop:N(theta+):N(ttheta)} and Table \ref{tab:block}.
\end{proof}

\begin{rem}\label{rem:cc-cano-chars}
By the remark at the beginning of this subsection, when we construct weight characters, we start with a representative $\ttheta$ of an $\tN$-conjugacy class of $\dz(\tC/Z(\tR))$.
We fix a way to choose such a representative.
Recall that $\ttheta=\ttheta_0\theta_+$ has two parts: the $0$-part $\ttheta_0$ and the $+$-part $\theta_+$.
Since $\Gamma$ determines an $N_\Gamma$-conjugacy class in $\dz(C_\Gamma/R_\Gamma)$, once we fix $\theta_\Gamma\in\dz(C_\Gamma/R_\Gamma)$ for each $\Gamma$, we fix the way to choose $\theta_+$; as in \cite[\S7]{FS89}, call it normalized.
For cases (I)$\sim$(III), (V) and any $\tN$-conjugacy class of $\dz(\tC/Z(\tR))$, by Proposition \ref{prop:N(theta+):N(ttheta)}, there is only one $\ttheta$ in it whose $+$-part $\theta_+$ is normalized; we take this $\ttheta$ as the representative of this $\tN$-conjugacy class.
For cases (IV), (VI) and any $\tN$-conjugacy class of $\dz(\tC/Z(\tR))$, by Proposition \ref{prop:N(theta+):N(ttheta)}, there are two $\ttheta$ in it whose $+$-part $\theta_+$ is normalized.
We can take either of the two choices as the representative of this $\tN$-conjugacy class, but once we fix one, we should always use that one.
\end{rem}

\begin{rem}\label{rem:ts0}
For the proof of Lemma \ref{lem:(3)of(iii)}, we make a convention related to Remark \ref{rem:action-tz} and Remark \ref{rem:zeta}.
Fixing an extension $\zeta$ as in Remark \ref{rem:zeta} gives a decomposition $\ts=\ts_0\ts_+$, where $\tchi_0 = \chi_{\ts_0,\kappa}^{\J_0(V_0)}$ as in the proof of Proposition \ref{prop:N(theta+):N(ttheta)}.
Let $s$, $\ts$ and the labels for characters in $\Irr(\tG)$ be as in Remark \ref{rem:action-tz}; but let $\tz_0$ be a generator of $Z(\tG^*)_{\ell'}$.
We start with $\tG$ of a fixed dimension $2n$, and consider blocks with labels $(\ts\tz_0^i,\cK)$, where $i \in \set{0,1,\ldots,(q-1)_{\ell'}-1}$ if $m_{X+1}(s)=0$ while $i \in \set{0,1,\ldots,(q-1)_{\ell'}/2-1}$ if $m_{X+1}(s)\neq0$.
Then we have a fixed symplectic space $V_0$ and the conformal symplectic group $\J_0(V_0)$ over $V_0$.
Thus we can fix particular $\zeta$'s for all blocks with labels $(\ts\tz_0^i,\cK)$ such that the induced characters of $\J_0(V_0)$ are $\chi_{\ts_0\tz_0^i,\kappa}^{\J_0(V_0)}$'s.
\end{rem}

\subsection{}\label{subsec:weight}
In this subsection, we label the weights of $\tG$ and prove the blockwise Alperin weight conjecture for $\tG$.
We start with a lemma about the construction of characters under some \emph{ad hoc} conditions.

\begin{lem}\label{lem:cons-char}
Let $K= K_0 \times_Z K_+$ be a central product over $Z= K_0 \cap K_+$.
Assume $\ttheta=\ttheta_0\theta_+$ is a character of $K$.
Denote by $\zeta_+$ the linear character of $Z$ induced by $\ttheta_0$ and $\theta_+$.
Let $H=K_0H_+$ be such that $K_0 \cap H_+ =Z$, $K_0 \unlhd H$, $K_+ \unlhd H_+ \unlhd H$.
Assume $H_+$ stabilizes $\ttheta_0$ and $\theta_+$, and assume $[K_0,H_+] \leq \Ker\zeta_+$.
Let $\psi_+ \in \Irr(H_+\mid\theta_+)$ and define $\psi:=\ttheta_0\psi_+$ by $\psi(k_0h_+)=\ttheta_0(k_0)\psi_+(h_+)$ for any $k_0 \in K_0$ and $h_+ \in H_+$.
Then $\psi$ is an irreducible character of $H$.
\end{lem}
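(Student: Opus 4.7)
The plan is to construct $\psi$ as the character afforded by an explicit representation of $H$ on a tensor product, then verify irreducibility via a direct inner product computation. Fix representations $\rho_0 \colon K_0 \to \GL(V_0)$ affording $\ttheta_0$ and $\rho_+ \colon H_+ \to \GL(V_+)$ affording $\psi_+$, and set $\rho(k_0 h_+) := \rho_0(k_0) \otimes \rho_+(h_+)$ on $V_0 \otimes V_+$.

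The first step is to show that $Z$ acts by the same scalar on each tensor factor. Since $Z \leq Z(K_0)$ and $\rho_0$ is irreducible, Schur's lemma gives $\rho_0(z) = \zeta_+(z) I_{V_0}$ for all $z \in Z$. For $\rho_+$, the $H_+$-stability of $\theta_+$ combined with Clifford's theorem yields $\rho_+|_{K_+} \cong e \cdot \rho_+^0$ for some integer $e$ and some irreducible representation $\rho_+^0$ of $K_+$ affording $\theta_+$; since $Z \leq Z(K_+)$, one has $\rho_+^0(z) = \zeta_+(z) I$, whence $\rho_+(z) = \zeta_+(z) I_{V_+}$ as well.

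Using these identities, well-definedness of $\rho$ is immediate: if $k_0 h_+ = k_0' h_+'$, then $z := (k_0')^{-1} k_0 = h_+' h_+^{-1} \in Z$, and the scalar $\zeta_+(z)$ coming from $\rho_0(k_0' z) = \rho_0(k_0)$ cancels the scalar $\zeta_+(z)^{-1}$ coming from $\rho_+(z^{-1} h_+') = \rho_+(h_+)$. For the homomorphism property, rewrite $(k_0 h_+)(k_0' h_+') = k_0 \, (h_+ k_0' h_+^{-1}) \, h_+ h_+'$ and note $h_+ k_0' h_+^{-1} = [h_+, k_0'] \, k_0'$ with $[h_+, k_0'] \in [H_+, K_0] \leq \Ker \zeta_+$; the scalar identity then gives $\rho_0(h_+ k_0' h_+^{-1}) = \rho_0(k_0')$, which yields $\rho\bigl((k_0 h_+)(k_0' h_+')\bigr) = \rho(k_0 h_+) \rho(k_0' h_+')$. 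Hence $\rho$ is a representation of $H$, and its character is $\psi$.

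Irreducibility follows from a direct inner product computation: the surjection $K_0 \times H_+ \to H$, $(k_0, h_+) \mapsto k_0 h_+$, has fibres of uniform size $|Z|$, so
\[
\langle \psi, \psi \rangle_H = \frac{1}{|H| \, |Z|} \sum_{(k_0, h_+)} |\ttheta_0(k_0)|^2 \, |\psi_+(h_+)|^2 = \frac{|K_0| \, |H_+|}{|H| \, |Z|} = 1
\]
using $|H| = |K_0| \, |H_+| / |Z|$ and the irreducibility of $\ttheta_0, \psi_+$. The main subtlety is the identity $\rho_+(z) = \zeta_+(z) I_{V_+}$ for $z \in Z$: this does not follow directly from $Z \leq Z(K_+)$, since $Z$ need not be central in $H_+$, and it crucially uses the $H_+$-stability of $\theta_+$ together with Clifford theory. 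The hypothesis $[K_0, H_+] \leq \Ker \zeta_+$ plays the complementary role of promoting the $H_+$-invariance of $\ttheta_0$ from an isomorphism to a pointwise equality of representations under conjugation, which is precisely what makes the naive tensor product formula an honest group homomorphism rather than a merely projective one.
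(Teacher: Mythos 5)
Your proposal is correct and follows essentially the same approach as the paper: both build the representation $\rho_0 \otimes \rho_+$ on $V_0 \otimes V_+$, establish well-definedness from the identity $\Irr(Z \mid \psi_+) = \{\zeta_+\}$, and verify the homomorphism property using $[K_0, H_+] \leq \Ker\zeta_+$. The only cosmetic difference is in the last step: where the paper deduces irreducibility by inflating along the surjection $K_0 \times H_+ \twoheadrightarrow H$ and invoking that the outer tensor product of irreducibles is irreducible, you compute $\langle\psi,\psi\rangle_H = 1$ directly over the same surjection; these are two standard phrasings of the same fact.
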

\begin{proof}
By the assumption, we have $\Irr(Z\mid\psi_+) = \set{\zeta_+}$.
Let $\rho_0: K_0 \to \GL(V_0)$, $\rho_+: H_+ \to \GL(V_+)$ be representations affording $\ttheta_0, \psi_+$ respectively.
Define $\rho: H \to \GL(V_0 \otimes V_+), k_0h_+ \mapsto \rho_0(k_0)\otimes\rho_+(h_+)$.
We first check that this map is well defined.
Assume $k_0h_+=k'_0h'_+$, then $k_0^{-1}k'_0=h_+{h'_+}^{-1} \in Z$ and
\begin{align*}
\rho(k'_0h'_+) &= \rho_0(k'_0) \otimes \rho_+(h'_+) = \rho_0 (k_0k_0^{-1}k'_0) \otimes \rho_+(h'_+) \\
&= \rho_0(k_0)\zeta_+(k_0^{-1}k'_0) \otimes \rho_+(h'_+) = \rho_0(k_0) \otimes \zeta_+(h_+{h'_+}^{-1})\rho_+(h'_+) \\
&= \rho_0(k_0) \otimes \rho_+(h_+{h'_+}^{-1}h'_+) = \rho_0(k_0) \otimes \rho_+(h_+) = \rho(k_0h_+).
\end{align*}
To show that $\rho$ is multiplicative, note that $k_0h_+k'_0h'_+=k_0k'_0[{k'_0}^{-1},h_+]h_+h'_+$, so
\begin{align*}
\rho(k_0h_+k'_0h'_+) &= \rho_0(k_0k'_0[{k'_0}^{-1},h_+]) \otimes \rho_+(h_+h'_+) \\
&= \rho_0(k_0k'_0)\zeta_+([{k'_0}^{-1},h_+]) \otimes \rho_+(h_+h'_+) \\
&= \rho_0(k_0)\rho_0(k'_0) \otimes \rho_+(h_+)\rho_+(h'_+) \\
&= \big(\rho_0(k_0) \otimes \rho_+(h_+)\big) \big(\rho_0(k'_0) \otimes \rho_+(h'_+)\big) \\
&= \rho(k_0h_+)\rho(k'_0h'_+).
\end{align*}
Thus $\rho$ is a representation of $H$.
To see that $\rho$ is irreducible, it suffices to note that $\rho$ can be lifted to $K_0 \times H_+$ via the surjective homomorphism $K_0 \times H_+ \to H, (k_0,h_+) \mapsto k_0h_+$.
\end{proof}

Set $\sC_{\Gamma,\delta} = \cup_{i} \dz(N_{\Gamma,\delta,i}(\theta_{\Gamma,\delta,i})/R_{\Gamma,\delta,i}\mid\theta_{\Gamma,\delta,i})$ and assume $\sC_{\Gamma,\delta}=\set{\psi_{\Gamma,\delta,i,j}}$ with $\psi_{\Gamma,\delta,i,j}$ a character of $N_{\Gamma,\delta,i}(\theta_{\Gamma,\delta,i})$.
Note that $\sC_{\Gamma,\delta}$ here is different from but in bijection with that in \cite[\S6.B]{Li19}.
Here, we define $\sC_{\Gamma,\delta}$ as above so that we can use Lemma \ref{lem:cons-char} to construct $\dz(\tN(\ttheta)/\tR \mid \ttheta)$ from $\dz(N_+(\theta_+)/R_+ \mid \theta_+)$.

Recall that $N:=N_G(R)$ can be decomposed as $N = N_0 \times N_+$.
From Equation (\ref{equ:theta_+R_+}), we have $N_+(\theta_+) = \prod_{\Gamma,\delta,i} N_{\Gamma,\delta,i}(\theta_{\Gamma,\delta,i})\wr\fS(t_{\Gamma,\delta,i})$.
Any $\psi_+ \in \dz(N_+(\theta_+)/R_+ \mid \theta_+)$ can be decomposed as $\psi_+ = \prod_{\Gamma,\delta,i} \psi_{\Gamma,\delta,i}$, where $\psi_{\Gamma,\delta,i}$ is a character of $N_{\Gamma,\delta,i}(\theta_{\Gamma,\delta,i})\wr\fS(t_{\Gamma,\delta,i})$.
Then by Clifford theory, $\psi_{\Gamma,\delta,i}$ is of the form
\begin{equation}\label{equ:psi}
\Ind_{N_{\Gamma,\delta,i}(\theta_{\Gamma,\delta,i})\wr\prod_j\fS(t_{\Gamma,\delta,i,j})}^{N_{\Gamma,\delta,i}(\theta_{\Gamma,\delta,i})\wr\fS(t_{\Gamma,\delta,i})}
\overline{\prod_j\psi_{\Gamma,\delta,i,j}^{t_{\Gamma,\delta,i,j}}} \cdot \prod_j\phi_{\kappa_{\Gamma,\delta,i,j}},
\addtocounter{thm}{1}\tag{\thethm} 
\end{equation}
where $t_{\Gamma,\delta,i}=\sum_jt_{\Gamma,\delta,i,j}$, $\overline{\prod_j\psi_{\Gamma,\delta,i,j}^{t_{\Gamma,\delta,i,j}}}$ is the canonical extension of $\prod_j\psi_{\Gamma,\delta,i,j}^{t_{\Gamma,\delta,i,j}}\in\Irr(N_{\Gamma,\delta,i}^{t_{\Gamma,\delta,i}})$ to $N_{\Gamma,\delta,i}\wr\prod_j\fS(t_{\Gamma,\delta,i,j})$ as in the proof of \cite[Proposition~2.3.1]{Bon99b}, $\kappa_{\Gamma,\delta,i,j}\vdash t_{\Gamma,\delta,i,j}$ without $\ell$-hook and $\phi_{\kappa_{\Gamma,\delta,i,j}}$ is the character of $\fS(t_{\Gamma,\delta,i,j})$ corresponding to $\kappa_{\Gamma,\delta,i,j}$.
Now, define $K_\Gamma:\cup_\delta\sC_{\Gamma,\delta}\to\{\ell\textrm{-cores}\}$, $\psi_{\Gamma,\delta,i,j}\mapsto\kappa_{\Gamma,\delta,i,j}$ and $K= \prod_\Gamma K_\Gamma$.
We call $K$ the label of $\psi_+$, which will be denoted as $\psi_{+,K}$ from now on.

As before, the block $\tB$ of $\tG$ with label $(\ts,\cK)$ is denoted by $\tB_{\ts,\cK}$.
Denote by $i\cW_{\ts,\cK}^0$ the set of $K$'s satisfying $\sum_{\delta,i,j}\ell^\delta |K_\Gamma(\psi_{\Gamma,\delta,i,j})|=w_\Gamma$ and  $m_\Gamma(s)=m_\Gamma(s_0)+\beta_\Gamma e_\Gamma w_\Gamma$, where $\beta_\Gamma=1$ or $2$ according to $\Gamma\notin\cF_0$ or $\Gamma\in\cF_0$.
Then by the above constructions, $i\cW_{\ts,\cK}^0$ is in bijection with the set $\cup_{\set{(\tR,\ttheta)\in\tB}/\sim\tN}\dz(N_+(\theta_+)/R_+ \mid \theta_+)$, where $(\tR,\ttheta)$ runs over an $\tN$-transversal of Brauer pairs $(\tR,\ttheta)$ belonging to the block $\tB_{\ts,\cK}$.
Denote this bijection as $K \mapsto \psi_{+,K}$.
Recall that for any $K \in i\cW_{\ts,\cK}^0$, $K'$ is defined as in the paragraph before \cite[Proposition 6.25]{Li19}.

\begin{lem}\label{lem:alpha-(IV)}
Assume we are in case (IV) or (VI) in Table \ref{tab:block} and $\alpha\in\Irr\left(N_+(\theta_+)/N_+(\ttheta)\right)$ is the unique non-trivial character, then $\alpha\cdot\psi_{+,K}=\psi_{+,K'}$.
\end{lem}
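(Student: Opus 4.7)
The plan is to reduce the statement to a computation concerning only the $\Gamma = X+1$ component of $\psi_{+,K}$. First, Lemma \ref{lem:N(ttheta)} identifies $N_+(\ttheta)$ as the kernel of the map $g \mapsto \omega_g$ on $N_+(\theta_+)$, and by Proposition \ref{prop:N(theta+):N(ttheta)} this quotient has order $2$ in cases (IV) and (VI), so the unique non-trivial character $\alpha$ is the sign of this quotient. Lemma \ref{lem:phi-Gamma} parts (2) and (3) then show that $\omega_g$ is trivial on each factor $N_{\Gamma,\delta,i}(\theta_{\Gamma,\delta,i}) \wr \fS(t_{\Gamma,\delta,i})$ for $\Gamma \neq X+1$, while on the $\Gamma = X+1$ factors it reads off, for each wreath-component of $g$, whether that component lies in $\hbar(N_{X+1,\delta,i}^0)$ or in its complement in $N_{X+1,\delta,i}^{tw}$. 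Consequently $\alpha$ is inflated from a sign character $\beta$ supported entirely on the $X+1$ part of $N_+(\theta_+)$, which factors through the natural sign on each wreath-product factor.

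Next, against the explicit formula (\ref{equ:psi}) for $\psi_{X+1,\delta,i}$, I would show that tensoring by $\beta$ respects the induction step, twists the base-group piece $\overline{\prod_j \psi_{X+1,\delta,i,j}^{t_{X+1,\delta,i,j}}}$ by a compatible sign character on the base group, and tensors each symmetric-group piece $\phi_{\kappa_{X+1,\delta,i,j}}$ with the sign character of $\fS(t_{X+1,\delta,i,j})$. Because tensoring a symmetric-group irreducible character with the sign character sends $\phi_\kappa$ to $\phi_{\kappa^t}$ for the conjugate partition $\kappa^t$, the combined effect on the label $K$ is precisely the involution $K \mapsto K'$ defined immediately before \cite[Proposition~6.25]{Li19}.

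The main obstacle is the careful identification of $\beta$ as a product of standard sign characters at the wreath-product level, which demands tracking the precise parametrization of $\sC_{\Gamma,\delta}$ and $N_{\Gamma,\delta,i}(\theta_{\Gamma,\delta,i})$ here versus that in \cite[\S6.B]{Li19}. A related subtlety is that degenerate Lusztig symbols are counted twice in the conformal setting of this paper but only once in the symplectic setting of \cite{Li19}; this asymmetry must be accommodated when transferring the involution from one parametrization to the other. Once these bookkeeping issues are resolved, the equality $\alpha \cdot \psi_{+,K}=\psi_{+,K'}$ falls out of the componentwise tensoring calculation together with the definition of $K'$.
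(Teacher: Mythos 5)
Your opening reduction is correct: $\alpha$ is the sign character of the order-two quotient $N_+(\theta_+)/N_+(\ttheta)$, by Lemma \ref{lem:N(ttheta)} and Proposition \ref{prop:N(theta+):N(ttheta)}; by \cite[(7E)]{FS89} (generalized) it is trivial on the factors $N_{\Gamma,\delta,i}(\theta_{\Gamma,\delta,i})\wr\fS(t_{\Gamma,\delta,i})$ with $\Gamma\neq X+1$, so the question localizes to the $X+1$ components. This matches the paper.

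The second paragraph, however, contains a genuine error that derails the argument. You claim that, on the $X+1$ wreath-product factor, $\alpha$ both twists the base group and tensors each $\phi_{\kappa_{X+1,\delta,i,j}}$ with the sign character of $\fS(t_{X+1,\delta,i,j})$, so that the effect on labels is $\kappa\mapsto\kappa^t$ (transpose). This is not what happens. The subgroup $N_+(\ttheta)$ contains the full top group $\fS(t)$ of the wreath product --- explicitly, in the twisted picture one has $N_+(\ttheta)=B_0\rtimes\fS(t)$ where $B_0$ is an index-two subgroup of the base group (as in the proof of Lemma \ref{lem:equi-claim}) --- so $\alpha$ is trivial on the symmetric-group factor and \emph{only} twists the base-group piece. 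The formula one gets is
\[
\alpha\cdot\psi_+ = \Ind\Bigl(\overline{\textstyle\prod_j(\Res_{N_{X+1,\delta,i}}\alpha\cdot\psi_{X+1,\delta,i,j})^{t_j}}\cdot\textstyle\prod_j\phi_{\kappa_{X+1,\delta,i,j}}\Bigr),
\]
with the $\phi_\kappa$'s untouched. The restriction of $\alpha$ to the base group swaps $\psi_{X+1,\delta,i,j}\leftrightarrow\psi_{X+1,\delta,i,j'}$ (the pairing of \cite[Lemma 6.24]{Li19}); it does not transpose partitions. Correspondingly, the involution $K\mapsto K'$ before \cite[Proposition 6.25]{Li19} --- and at the level of ordered quotients $Q_{X+1}=(\lambda_1,\dots,\lambda_e,\mu_1,\dots,\mu_e)\mapsto(\mu_1,\dots,\mu_e,\lambda_1,\dots,\lambda_e)$ --- is a swap of which index carries which partition, not a conjugation of partitions. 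If you follow your plan and conjugate the $\kappa$'s, you would get a bijection landing on $\psi_{+,K^t}$, not $\psi_{+,K'}$, and the two do not agree in general. Replacing the sign-on-$\fS(t)$ step with ``twist only the base group, using the index swap $j\leftrightarrow j'$'' repairs the argument and recovers the paper's proof.
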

\begin{proof}
Recall that $N_+(\theta_+) = \prod_{\Gamma,\delta,i} N_{\Gamma,\delta,i}(\theta_{\Gamma,\delta,i})\wr\fS(t_{\Gamma,\delta,i})$.
By \cite[(7E)]{FS89} and Lemma \ref{lem:N(ttheta)}, $N_{\Gamma,\delta,i}(\theta_{\Gamma,\delta,i})\wr\fS(t_{\Gamma,\delta,i}) \leq N_+(\ttheta)$ for $\Gamma \neq X+1$.
So there is no loss to assume that $N_+(\theta_+) = \prod_{\delta,i} N_{X+1,\delta,i}\wr\fS(t_{\delta,i})$; note that $N_{X+1,\delta,i}(\theta_{X+1,\delta,i}) = N_{X+1,\delta,i}$.
Since $N_{X+1,\delta,i}\wr\fS(t_{\delta,i}) \nleq N_+(\ttheta)$ for each component, we may assume there is only one component in the radical subgroup $R_+ = R_{X+1,\delta,i}^t$.
Thus $N_+(\theta_+) = N_{X+1,\delta,i}\wr\fS(t)$ and
\begin{equation*}
\psi_+ = \Ind_{N_{X+1,\delta,i}\wr\prod_j\fS(t_j)}^{N_{X+1,\delta,i}\wr\fS(t)}
\overline{\prod_j\psi_{X+1,\delta,i,j}^{t_j}} \cdot \prod_j\phi_{\kappa_{X+1,\delta,i,j}}.
\end{equation*}
So
\begin{equation*}
\alpha\cdot\psi_+ = \Ind_{N_{X+1,\delta,i}\wr\prod_j\fS(t_j)}^{N_{X+1,\delta,i}\wr\fS(t)}
\overline{\prod_j(\Res^{N_+(\theta_+)}_{N_{X+1,\delta,i}}\alpha\cdot\psi_{X+1,\delta,i,j})^{t_j}} \cdot \prod_j\phi_{\kappa_{X+1,\delta,i,j}}.
\end{equation*}
By \cite[(7E)]{FS89} again, $\Res^{N_+(\theta_+)}_{N_{X+1,\delta,i}}\alpha \neq 1$ and
\[\Res^{N_+(\theta_+)}_{N_{X+1,\delta,i}}\alpha\cdot\psi_{X+1,\delta,i,j} = \psi_{X+1,\delta,i,j'},\]
where $\psi_{X+1,\delta,i,j'}$ is as in \cite[Lemma 6.24]{Li19}.
Thus $\alpha\cdot\psi_{+,K}=\psi_{+,K'}$ by the definition of $K'$.
\end{proof}

Now, set $i\cW_{\ts,\cK}=i\cW_{\ts,\cK}^0$ for cases (I)$\sim$(III) and (V), and set 
\[i\cW_{\ts,\cK} = \Set{ \set{K,K'} \mid K\neq K' \in i\cW_{\ts,\cK}^0 } \cup \Set{ (K,i) \mid K=K' \in i\cW_{\ts,\cK}^0, i\in \Z/2\Z }\]
for cases (IV) and (VI).

\begin{prop}\label{prop:weights}
Assume $\tB$ is a block of $\tG$ with label $(\ts,\cK)$, then $\Alp(\tB)$ is in bijection with $i\cW_{\ts,\cK}$.
\end{prop}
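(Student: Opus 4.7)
The plan is to parametrise $\Alp(\tB_{\ts,\cK})$ in two stages: first reduce to counting $\dz(\tN(\ttheta)/\tR\mid\ttheta)$ for a fixed representative of each $\tN$-conjugacy class of $\tB$-Brauer pairs $(\tR,\ttheta)$, and then identify these defect-zero characters with the combinatorial set $i\cW_{\ts,\cK}$ via the labels $K$ introduced in \S\ref{subsec:weight}.

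First, by Corollary \ref{cor:weights-into-blocks} combined with Clifford theory for the normal inclusion $\tC \unlhd \tN$, every $\tB$-weight $(\tR,\tvarphi)$ has the form $\tvarphi = \Ind_{\tN(\ttheta)}^{\tN}\tpsi$ for some $\ttheta \in \dz(\tC/Z(\tR))$ with $(\tR,\ttheta)$ a $\tB$-Brauer pair and some $\tpsi \in \dz(\tN(\ttheta)/\tR\mid\ttheta)$, and two such pairs $(\ttheta,\tpsi)$ yield $\tN$-conjugate weights precisely when they are $\tN$-conjugate. Fixing a representative $(\tR,\ttheta)$ of each $\tN$-conjugacy class as prescribed in Remark \ref{rem:cc-cano-chars} using Table \ref{tab:block}, the task reduces to counting $\dz(\tN(\ttheta)/\tR\mid\ttheta)$ for each such representative.

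Second, I would apply Lemma \ref{lem:cons-char} to the central product $\tC = \tC_0 \cdot C_+$ (over $Z(C_+)$ from Remark \ref{rem:cen-prod}) and to the subgroup $\tN(\ttheta) = \tC_0 \cdot N_+(\ttheta)$, the latter decomposition coming from $N(\ttheta) = N_0 \times N_+(\ttheta)$ (with $N_0 = C_0$) together with $\tau \in \tC_0$. The hypotheses hold because $N_+(\ttheta)$ stabilises $\ttheta_0$ by the defining condition $\ttheta_0\omega_g = \ttheta_0$ of Lemma \ref{lem:N(ttheta)}, and $[\tC_0,N_+(\ttheta)] \leq Z(C_+) \cap \Ker\zeta_+$ follows from the generalisation of \cite[(7D), (7E)]{FS89} to radical subgroups carried out in \S\ref{subsec:N(ttheta)}. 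The formula $\tpsi(x_0 y_+) = \ttheta_0(x_0)\psi_+(y_+)$ then furnishes a bijection $\dz(\tN(\ttheta)/\tR\mid\ttheta) \longleftrightarrow \dz(N_+(\ttheta)/R_+\mid\theta_+)$, the defect-zero condition being equivalent on the two sides since $\ttheta_0 \in \dz(\tC_0/Z(\tR_0))$.

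Third, I would compare $\dz(N_+(\ttheta)/R_+\mid\theta_+)$ with $\dz(N_+(\theta_+)/R_+\mid\theta_+) = \set{\psi_{+,K} \mid K \in i\cW_{\ts,\cK}^0}$. In cases (I)--(III) and (V), Proposition \ref{prop:N(theta+):N(ttheta)} gives $N_+(\ttheta) = N_+(\theta_+)$, so the labels $K$ directly parametrise $\dz(\tN(\ttheta)/\tR\mid\ttheta)$, matching $i\cW_{\ts,\cK} = i\cW_{\ts,\cK}^0$. In cases (IV) and (VI), $N_+(\ttheta)$ is a normal index-two subgroup of $N_+(\theta_+)$, and Lemma \ref{lem:alpha-(IV)} identifies the action of the non-trivial character $\alpha$ of the quotient as $\psi_{+,K} \mapsto \psi_{+,K'}$. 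Standard Clifford theory then shows that each $\alpha$-orbit $\set{\psi_{+,K},\psi_{+,K'}}$ with $K \neq K'$ restricts to a single irreducible of $N_+(\ttheta)$, while each $\alpha$-fixed $\psi_{+,K}$ (that is, $K = K'$) restricts to a sum of two distinct irreducibles, which is precisely the set $i\cW_{\ts,\cK}$ defined just before the proposition.

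The main obstacle is verifying the commutator hypothesis $[\tC_0,N_+(\ttheta)] \leq \Ker\zeta_+$ in Lemma \ref{lem:cons-char} together with the fact that the resulting $\tpsi = \ttheta_0\psi_+$ genuinely restricts to a multiple of $\ttheta$ on $\tC$; both reduce, via the central product structure and $\psi_+|_{C_+}$ being a multiple of $\theta_+$, to the character-theoretic computations of the functions $\omega_g$ encoded in Lemma \ref{lem:phi-Gamma}.
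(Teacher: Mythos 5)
Your proposal follows the paper's strategy for cases (I)--(IV) and (VI), but it breaks down precisely in case~(V), and the failure is at the hypothesis of Lemma~\ref{lem:cons-char} that you invoke to produce the stage-two bijection.

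In your second step you assert that $[\tC_0,N_+(\ttheta)] \leq \Ker\zeta_+$ ``follows from the generalisation of \cite[(7D), (7E)]{FS89}.'' What \cite[(7E)]{FS89} (generalised) actually says is the opposite in case~(V): Proposition~\ref{prop:N(theta+):N(ttheta)} and its proof show that there $N_+(\ttheta) = N_+(\theta_+)$ but $\omega_g \neq 1$ for some $g \in N_+(\theta_+)$, and the condition ``$\omega_g = 1$ for all $g$'' is, by the very definition of $\omega_g$ via $\zeta_+$ applied to commutators, \emph{equivalent} to $[\tC_0,N_+(\theta_+)] \leq \Ker\zeta_+$. So in case~(V) the commutator hypothesis genuinely fails, Lemma~\ref{lem:cons-char} does not apply with $K_0 = \tC_0$, and the formula $\tpsi(x_0y_+) = \ttheta_0(x_0)\psi_+(y_+)$ is not well defined. (The equality $\ttheta_0\omega_g = \ttheta_0$ defining $N(\ttheta)$ in Lemma~\ref{lem:N(ttheta)} does not rescue you: in case~(V) it holds with $\omega_g \neq 1$ because $\ttheta_0$ is not faithful on $\tC_0/C_0Z(C_+)$, whereas the hypothesis of Lemma~\ref{lem:cons-char} is the stronger condition on the kernel of the linear character $\zeta_+$.)

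The paper repairs this by passing to the index-two subgroup $\hC_0 = \group{\tau^2,C_0Z(C_+)}$, on which $\omega_g$ does restrict trivially by \cite[(7E)]{FS89}, so that Lemma~\ref{lem:cons-char} applies to $\hC_0 C_+$ and $\hC_0 N_+(\theta_+)$; one then induces the resulting $\hpsi_K = \htheta_0^{(i)}\psi_{+,K}$ up to $\tN(\ttheta)$ and checks, via the two constituents of $\Res^{\tC_0}_{\hC_0}\ttheta_0$, that the two choices $i=0,1$ produce the same $\psi_K$. Your proposal omits this construction entirely, so the count in case~(V) is not established; you would need to supply this or some equivalent argument. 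The remaining cases in your write-up are fine and essentially reproduce the paper's proof.
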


\begin{proof}
Recall that $i\cW_{\ts,\cK}^0$ is in bijection with $\cup_{\set{(\tR,\ttheta)\in\tB}/\sim\tN}\dz(N_+(\theta_+)/R_+ \mid \theta_+)$.

(1) Assume we are in one of the cases (I)$\sim$(III).
Then by Proposition \ref{prop:N(theta+):N(ttheta)} and its proof, $N_+(\theta_+) = N_+(\ttheta_0) \cap N_+(\theta_+) = N_+(\ttheta)$ and $\omega_g=1$ for any $g \in N_+(\theta_+)$.
By the definition of $\omega_g$ (\cite[(7.5)]{FS89}), $[\tC_0,N_+(\theta_+)] \leq \Ker\zeta_+$, where $\zeta_+$ is the linear character of $Z(C_+)$ induced by $\ttheta_0$ and $\theta_+$.
Then by applying Lemma \ref{lem:cons-char} to $\tC=\tC_0C_+$ and $\tN(\ttheta) = \tC_0N_+(\theta_+)$, $K \mapsto \psi_K := \ttheta\psi_{+,K}$ gives a bijection from $i\cW_{\ts,\cK}^0$ to $\cup_{\set{(\tR,\ttheta)\in\tB}/\sim\tN}\dz(\tN(\ttheta)/\tR \mid \ttheta)$.
By Clifford theory, the latter set is in bijection with the set of $\tB$-weights via $\psi_K \mapsto \Ind_{\tN(\ttheta)}^{\tN} \psi_K$ and thus the assertion holds in this case.

(2) Assume we are in case (V).
Then by Proposition \ref{prop:N(theta+):N(ttheta)} and its proof, $N_+(\theta_+) = N_+(\ttheta_0) \cap N_+(\theta_+) = N_+(\ttheta)$ but $\omega_g\neq1$ for some $g \in N_+(\theta_+)$.
We can not apply Lemma \ref{lem:cons-char} directly.
Let $\hC_0 = \group{\tau^2,C_0Z(C_+)}$.
As in the proof of Proposition \ref{prop:N(theta+):N(ttheta)}, $\ttheta_0 = \tchi_0 \zeta$ with $\tchi_0 = \chi_{\ts_0,\kappa}^{\J_0(V_0)}$, where $\cK=\set{\kappa}$.
Then $\Res^{\tC_0}_{\hC_0} \ttheta_0$ has two irreducible constituents since $\Res^{\tC_0}_{C_0} \ttheta_0 = \Res^{\J_0(V_0)}_{\I_0(V_0)} \tchi_0$ has two irreducible constituents by \cite[Theorem 3.2]{Li19}.
Let $\htheta_0^{(0)}$ and $\htheta_0^{(1)}$ be the two irreducible constituents of $\Res^{\tC_0}_{\hC_0} \ttheta_0$.
By \cite[(7E)]{FS89}, $\Res^{\tC_0}_{\hC_0}\omega_g=1$, which means $[\hC_0,N_+(\theta_+)] \leq \Ker\zeta_+$ by definition of $\omega_g$.
Then by applying Lemma \ref{lem:cons-char} to $\hC:=\hC_0C_+$ and $\hN(\ttheta) := \hC_0N_+(\theta_+)$, there is a character $\hpsi_K := \htheta_0^{(0)}\psi_{+,K}\in \dz(\hN(\ttheta)/\tR)$.
Then $\psi_K = \Ind_{\hN(\ttheta)}^{\tN(\ttheta)} \hpsi_K \in \dz(\tN(\ttheta)/\tR \mid \ttheta)$.
If we start with $\htheta_0^{(1)}$, we get the same $\psi_K$.
This gives a bijection from $i\cW_{\ts,\cK}^0$ to $\cup_{\set{(\tR,\ttheta)\in\tB}/\sim\tN}\dz(\tN(\ttheta)/\tR \mid \ttheta)$, and proves the assertion in this case by Clifford theory again.

(3) Assume we are in case (IV) or (VI).
Then $|N_+(\theta_+):N_+(\ttheta)|=2$ and $\omega_g=1$ for any $g \in N_+(\ttheta)$, thus $[\tC_0,N_+(\ttheta)] \leq \Ker\zeta_+$.
This case is divided into two sub-cases.

(3.1) Assume $K \neq K'$.
By Lemma \ref{lem:alpha-(IV)} and Clifford theory, $\psi_{+,K}^0:=\Res^{N_+(\theta_+)}_{N_+(\ttheta)} \psi_{+,K}$ is irreducible.
Applying Lemma \ref{lem:cons-char} to $\tC=\tC_0C_+$, $\tN(\ttheta)=\tC_0N_+(\ttheta)$ and $\psi_{+,K}^0$, there is an irreducible character $\psi_K^0 := \ttheta_0\psi_{+,K}^0$ of $\tN(\ttheta)$.
Then $\Ind_{\tN(\ttheta)}^{\tN}\psi_K^0$ gives a weight associated to $\tR$.
If we start with $\psi_{+,K'}$, we will get the same weight, since $\Res^{N_+(\theta_+)}_{N_+(\ttheta)} \psi_{+,K} = \Res^{N_+(\theta_+)}_{N_+(\ttheta)} \psi_{+,K'}$.
Give this weight the label $\set{K,K'}$.

(3.2) Assume $K=K'$.
By Lemma \ref{lem:alpha-(IV)} and Clifford theory, $\Res^{N_+(\theta_+)}_{N_+(\ttheta)} \psi_{+,K}$ has two irreducible constituents, denoted as $\psi_{+,K}^{(i)}$, $i\in\Z/2\Z$.
Applying Lemma \ref{lem:cons-char} to $\tC=\tC_0C_+$, $\tN(\ttheta)=\tC_0N_+(\ttheta)$ and $\psi_{+,K}^{(i)}$, there is an irreducible character $\psi_K^{(i)} := \ttheta_0\psi_{+,K}^{(i)}$ of $\tN(\ttheta)$.
Then $\Ind_{\tN(\ttheta)}^{\tN}\psi_K^{(i)}$ gives a weight associated to $\tR$.
Give this weight the label $(K,i)$.

Combining (3.1) and (3.2), we have a bijection from $i\cW_{\ts,\cK}$ to $\Alp(\tB)$, which proves the assertion in this case.
\end{proof}

Denote the conjugacy class of weights corresponding by the above Proposition to $K \in i\cW_{\ts,\cK}$ for cases (I)$\sim$(III) and (V) by $w_K$ or $w_{\ts,\cK,K}$; denote the conjugacy class of weights corresponding to $\set{K,K'} \in i\cW_{\ts,\cK}$ for cases (IV) and (VI) by $w_{\set{K,K'}}$ or $w_{\ts,\cK,\set{K,K'}}$ and the conjugacy class of weights corresponding to $(K,i) \in i\cW_{\ts,\cK}$ for cases (IV) and (VI) by $w_{(K,i)}$ or $w_{\ts,\cK,(K,i)}$.

Recall that $|\sC_{\Gamma,\delta}| = \beta_\Gamma e_\Gamma\ell^\delta$ by \cite[(4A)]{An94}.
By \cite[(1A)]{AF90}, there is a bijection between $i\cW_{\ts,\cK}^0$ and the following set
\[i\cW_{\ts,\cK}^{q,0} = \Set{ Q = \prod_\Gamma Q_\Gamma ~\middle|~
\begin{array}{c} Q_\Gamma=(Q_\Gamma^{(1)},\cdots,Q_\Gamma^{(\beta_\Gamma e_\Gamma)}),\\
Q_\Gamma\textrm{'s are partitions, } \sum_{i=1}^{\beta_\Gamma e_\Gamma} |Q_\Gamma^{(i)}| = w_\Gamma. \end{array}}\]
For $Q_{X+1} = (\lambda_1,\dots,\lambda_e,\mu_1,\dots,\mu_e)$, define $(Q_{X+1})' = (\mu_1,\dots,\mu_e,\lambda_1,\dots,\lambda_e)$, and define $Q'$ by $Q'_\Gamma = Q_\Gamma$ for $\Gamma \neq X+1$ and $Q'_{X+1} = (Q_{X+1})'$.
So if $K$ corresponds to $Q$, $K'$ corresponds to $Q'$.
Similarly as before, set $i\cW_{\ts,\cK}^q=i\cW_{\ts,\cK}^{q,0}$ for cases (I)$\sim$(III) and (V), and set 
\[i\cW_{\ts,\cK}^q = \Set{ \set{Q,Q'} \mid Q\neq Q' \in i\cW_{\ts,\cK}^{q,0} } \cup \Set{ (Q,i) \mid Q=Q' \in i\cW_{\ts,\cK}^{q,0}, i\in \Z/2\Z }\]
for cases (IV) and (VI).
Then $i\cW_{\ts,\cK}^q$ is in bijection with $i\cW_{\ts,\cK}$ and thus with $\cW(\tB_{\ts,\cK})$.
Denote the conjugacy class of weights corresponding to $Q \in i\cW_{\ts,\cK}^q$ for cases (I)$\sim$(III) and (V) by $w_Q$ or $w_{\ts,\cK,Q}$; denote the conjugacy class of weights corresponding to $\set{Q,Q'} \in i\cW_{\ts,\cK}^q$ for cases (IV) and (VI) by $w_{\set{Q,Q'}}$ or $w_{\ts,\cK,\set{Q,Q'}}$ and the conjugacy class of weights corresponding to $(Q,i) \in i\cW_{\ts,\cK}^q$ for cases (IV) and (VI) by $w_{(Q,i)}$ or $w_{\ts,\cK,(Q,i)}$.

\paragraph{}
Now, Theorem \ref{mainthm-1} follows from the following explicit bijection.

\begin{thm}\label{thm:bijection}
Let $\tB=\tB_{\ts,\cK}$ be a block of $\tG$.
Keep the notation above and that of Lemma \ref{lem:IBr}.
\begin{enumerate}[(1)]\setlength{\itemsep}{0pt}
\item Assume we are in one of the cases (I)$\sim$(III) and (V) with $\cK=\set{\kappa}$.
	Then there is a bijection
	\[\tOmega_{\tB}: \quad \IBr(\tB_{\ts,\set{\kappa}}) \longleftrightarrow \Alp(\tB_{\ts,\set{\kappa}})\]
	satisfying that if $\tOmega_{\tB}(\phi_{\ts,\lambda}) = w_{\ts,\cK,Q}$,
	then $\lambda_\Gamma=\kappa_\Gamma*Q_\Gamma$.
	Note that in cases (II) or (III), then $\lambda_{X+1}=\kappa_{X+1}$.
\item Assume we are in case (IV) with $\cK=\set{\kappa}$ and $\kappa_{X+1}=\O$
	or in case (VI) with $\cK=\set{\kappa,\kappa'}$ and $\kappa_{X+1}\neq\O$ degenerate.
	Then there is a bijection
	\[\tOmega_{\tB}: \quad \IBr(\tB_{\ts,\cK}) \longleftrightarrow \Alp(\tB_{\ts,\cK})\]
	satisfying that if $\tOmega_{\tB}(\phi_{\ts,\lambda})=w_{\ts,\cK,\set{Q,Q'}}$ or $w_{\ts,\cK,(Q,i)}$, then
	\begin{enumerate}[(i)]\setlength{\itemsep}{0pt}
	\item $\lambda_\Gamma=\kappa_\Gamma*Q_\Gamma$ for $\Gamma \neq X+1$;
	\item $\lambda_{X+1}=\kappa_{X+1}*Q_\Gamma$ if $Q \neq Q'$;
	\item $\lambda_{X+1}=\kappa_{X+1}*(Q,i)$ if $Q=Q'$.
	\end{enumerate}
\end{enumerate}
\end{thm}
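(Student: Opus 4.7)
The plan is to construct the bijection $\tOmega_{\tB}$ purely combinatorially by matching the two parametrizations via core--quotient decompositions of Lusztig symbols and partitions. By Lemma \ref{lem:IBr}, the set $\IBr(\tB_{\ts,\cK})$ is in natural bijection with $i\IBr(\tB_{\ts,\cK})$, which consists of all $\lambda=\prod_\Gamma\lambda_\Gamma$ whose $e_\Gamma$-core at each component $\Gamma$ fits the data $\cK$ specified in Table \ref{tab:block}. On the other side, Proposition \ref{prop:weights} and the subsequent discussion give a bijection between $\Alp(\tB_{\ts,\cK})$ and $i\cW_{\ts,\cK}^q$, which records the quotient part $Q=\prod_\Gamma Q_\Gamma$ of the data (possibly together with an ordering or $\Z/2\Z$--index in the degenerate sub--cases). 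The bijection $\tOmega_{\tB}$ is then the tautological one: send $\phi_{\ts,\lambda}$ to the weight whose core data is $\cK$ and whose quotient data is the $Q$ obtained from $\lambda$ via core--quotient decomposition component by component.

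For cases (I) and (V) the core $\cK=\{\kappa\}$ is uniquely determined and $\lambda_\Gamma=\kappa_\Gamma*Q_\Gamma$ is a well--defined bijection since there is no degenerate symbol to worry about (in case (V) the symbol $\lambda_{X+1}$ is automatically non--degenerate because of the proper rank inequality). For cases (II) and (III) the condition $2\rk\kappa_{X+1}=m_{X+1}(s)$ forces $w_{X+1}=0$ and hence $\lambda_{X+1}=\kappa_{X+1}$, so $\lambda$ is determined entirely by the quotient parts at the other components, which are again freely matched with $Q$. The counts on both sides agree because degenerate Lusztig symbols are counted twice in both parametrizations of $\IBr(\tB)$ and of $\Alp(\tB)$ in cases (II), (III), cf.\ Remark \ref{rem:block} and the construction preceding Proposition \ref{prop:weights}.

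For case (IV) one has $\kappa_{X+1}=\O$ and $w_{X+1}>0$, so the quotient $Q_{X+1}$ determines a (non--empty) symbol $\lambda_{X+1}$ via the core--quotient decomposition: non--degenerate quotients $Q\ne Q'$ yield two ordered quotients giving the same (necessarily non--degenerate) symbol $\lambda_{X+1}=\kappa_{X+1}*Q_{X+1}$ from the two ordered sequences, while a degenerate quotient $Q=Q'$ produces a degenerate symbol $\lambda_{X+1}$ which is counted twice as $(Q,0),(Q,1)$ on both sides. Case (VI) is handled identically, but with $\cK=\{\kappa,\kappa'\}$ of non--empty degenerate type, where both representatives $\kappa,\kappa'$ contribute to the same block. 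In each of these cases, comparing with the combinatorial description of $i\cW_{\ts,\cK}$ after Proposition \ref{prop:weights} and the description of $i\IBr(\tB_{\ts,\cK})$ from Table \ref{tab:block} gives a bijection satisfying (i)--(iii).

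The main technical obstacle is bookkeeping: one must ensure that the doubling convention for degenerate symbols (counted twice for conformal groups in the parametrization of $\Irr(\tB)$) is matched with the doubling in the weight parametrization (the pairs $\{Q,Q'\}$ together with indexed items $(Q,i)$ in cases (IV), (VI)), and that the single copy convention in cases (I), (V) matches the bijection $K\mapsto\psi_{+,K}$ of Proposition \ref{prop:weights}. This is verified case by case from Remark \ref{rem:block} and the constructions in \S\ref{subsec:weight}; since in each case both sides are parametrized by the same combinatorial object (a tuple of partitions or symbols built from a fixed core and a running quotient, with the appropriate identification), the bijection is forced, and Theorem \ref{mainthm-1} will follow immediately by summing $|\IBr(\tB)|=|i\cW_{\ts,\cK}^q|=|\Alp(\tB)|$ over all blocks $\tB$ of $\tG$.
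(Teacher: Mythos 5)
Your proof is correct and takes the same route as the paper, whose stated proof is simply that the theorem ``follows obviously from'' the combinatorics of core--quotient decompositions of partitions and Lusztig symbols in \S\ref{subsec:partition-symbol}. You have expanded that terse reference into the case-by-case matching (degenerate versus non-degenerate cores and quotients, ordered quotients versus unordered pairs, and the $\Z/2\Z$-indexing) that the paper leaves implicit, so the content is the same.
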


\begin{proof}
The theorem follows obviously from the \S\ref{subsec:partition-symbol}.
\end{proof}


\section{The inductive condition}\label{sec:inductive}

In this section, we verify the inductive condition for the cases in Theorem \ref{mainthm-2}.
Let $E$ be the set of field automorphisms on $\tG$ and $\cB$ be as in Table \ref{tab:block}.
Then $(\tG E)_B\leq(\tG E)_\cB$ holds obviously for any $B \in \tB$.
We first remark that the part (i) of Theorem \ref{thm:criterion} holds.
The first and the second requirements are obvious.
The rest follows from the fact that $\tG/G$ is cyclic.

\subsection{}\label{subsec:pro-bij}
In this subsection, we prove part (ii) of Theorem \ref{thm:criterion}.
Set $\tOmega^{\tG} = \cup_{\tB\in\Bl(\tG)} \tOmega_{\tB}$, where $\tOmega_{\tB}$ is the bijection in Theorem \ref{thm:bijection}

\begin{lem}\label{lem:equi}
Assume the decomposition matrix with respect to $\cE(\tG,\ell')$ is unitriangular.
Then the bijection $\tOmega^{\tG}$ is $\Aut(G)$-equivariant.
\end{lem}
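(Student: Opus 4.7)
The plan is to verify the equivariance on the generators of $\Aut(G)$, which via the identification $\Aut(G) \cong (\tG \rtimes E)/Z(\tG)$ split into two types: (a) the ``diagonal'' automorphisms, whose action on $\IBr(\tG)$ is captured by tensor product with characters in $\IBr(\tG/G) \cong Z(\tG^*)_{\ell'}$ (the dual of the diagonal); and (b) the field automorphisms coming from $E$. I would treat the two types separately, track the action through the parametrisation $\phi_{\ts,\lambda}\mapsto w_{\ts,\cK,Q}$ supplied by Theorem \ref{thm:bijection}, and check that $\ts$, $\cK$ and $Q$ transform in the same way on both sides.

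On the Brauer character side, the unitriangularity hypothesis is exactly what is needed to transfer the action from ordinary to modular characters: it guarantees that the labelling $\phi_{\ts,\lambda}$ of Lemma \ref{lem:IBr} is determined by the label $\chi_{\ts,\lambda}$ of the unique ordinary lift appearing as leading term of the reduction, so every automorphism acts on $\phi_{\ts,\lambda}$ by the same rule as on $\chi_{\ts,\lambda}$. For $\hat{\tz}\in\IBr(\tG/G)$ this rule is Remark \ref{rem:action-tz} (sending $\phi_{\ts,\lambda}$ to $\phi_{\tz\ts,\lambda}$, with the $\lambda\leftrightarrow\lambda'$ swap forced by $\widehat{-e}$ in the degenerate subcase of a component $X+1$). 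For $F_p\in E$ this rule comes from the compatibility of Lusztig's Jordan decomposition with Frobenius maps, giving $\phi_{\ts,\lambda}\mapsto\phi_{F_p^*(\ts),F_p(\lambda)}$ with $F_p$ acting combinatorially on partitions and symbols.

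On the weight side I would exploit the fact that $\tvarphi=\Ind_{\tN(\ttheta)}^{\tN}\psi_K$ is built locally from $(\ttheta_0,\theta_+,\psi_{+,K})$ via the constructions of \S\ref{subsec:weight} and Lemma \ref{lem:cons-char}. For the $\IBr(\tG/G)$-action, $\hat{\tz}$ restricts to a linear character of $\tN$ and passes through the induction; its restriction to $\tC_0$ twists $\ttheta_0=\tchi_0\zeta$ in the $\tchi_0$-part, and by Remark \ref{rem:ts0} the simultaneous choice of $\zeta$'s across the $Z(\tG^*)_{\ell'}$-orbit of blocks is precisely calibrated so that this twist amounts to $\ts_0\mapsto\tz\ts_0$ (with the same $\lambda\leftrightarrow\lambda'$ swap for degenerate $\kappa_{X+1}$ afforded by Remark \ref{rem:action-tz}(2)(b)), while $\theta_+$, $\psi_{+,K}$ and hence $\cK,Q$ are untouched. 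For $E$, I would invoke \cite[Lemma 6.8]{Li19}: field automorphisms fix the twisted basic subgroups, so the action reduces to their action on the weight characters; tracing it through the $\theta_{\Gamma,\delta,i}$'s and the Clifford datum of (\ref{equ:psi}) shows that $F_p$ sends $w_{\ts,\cK,Q}$ to $w_{F_p^*(\ts),F_p(\cK),F_p(Q)}$. In both cases the action matches that on $\phi_{\ts,\lambda}$ through the core/quotient decomposition $\lambda_\Gamma=\kappa_\Gamma\ast Q_\Gamma$.

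The main obstacle is the degenerate cases (II), (III), (IV), (VI) in Table \ref{tab:block}: here both $\IBr$ and $\Alp$ have a two-fold ambiguity (the symbols $\kappa,\kappa'$ on the global side and the weight labels $(K,i)$ or the unordered pair $\{K,K'\}$ on the local side from Proposition \ref{prop:weights} (3.1)--(3.2)), and the automorphism action can interchange the two copies. The coherence requires fixing the representative of each $\tN$-class of $\ttheta$ in Remark \ref{rem:cc-cano-chars} and the choice of $\zeta$ in Remark \ref{rem:ts0} consistently across the relevant $Z(\tG^*)_{\ell'}$-orbit, and then labelling the two irreducible constituents $\psi_{+,K}^{(i)}$ appearing in the proof of Proposition \ref{prop:weights} (3.2) so that they are permuted by $\widehat{-e}$ in the same way as $\chi_{\ts,\lambda},\chi_{\ts,\lambda'}$ on the character side. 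This is analogous to the bookkeeping carried out in \cite[Proposition 6.25]{Li19} for $\Sp_{2n}(q)$ and is where the bulk of the verification lies; once it is in place, the rest of the equivariance follows by direct inspection of the parameters.
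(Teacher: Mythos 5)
There is a genuine conceptual error in the opening split. You assert that the diagonal automorphisms act on $\IBr(\tG)$ ``by tensor product with characters in $\IBr(\tG/G)$.'' This is not the action in question: a diagonal automorphism of $G$ lifts to conjugation by an element of $\tG$, i.e.\ to an \emph{inner} automorphism of $\tG$, and inner automorphisms act trivially both on $\IBr(\tG)$ (class functions) and on $\Alp(\tG)$ ($\tG$-conjugacy classes of weights). Consequently $\tOmega^{\tG}$ is automatically equivariant under the diagonal part of $\Aut(G)$, and the entire content of Lemma~\ref{lem:equi} reduces to the field automorphisms $\sigma\in E$ --- which is precisely the paper's first sentence. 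The tensoring action of $\IBr(\tG/G)$ on $\IBr(\tG)$ is a genuinely different thing; it is needed for part (ii) of Theorem~\ref{thm:criterion}, but it is handled in the separate Lemma~\ref{lem:(3)of(iii)}, and in that lemma the action is via multiplication by $\hat{\tz}$, not via any automorphism of $\tG$. By conflating the two you end up sketching a proof of both lemmas under the wrong heading, while missing the one-line observation that actually trivializes the diagonal case here.

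This misattribution also infects your treatment of the degenerate cases. For $\sigma\in E$ and $K=K'$, the relevant claim (Lemma~\ref{lem:equi-claim} in the paper) is that $\sigma$ \emph{fixes} each constituent $\psi_{+,K}^{(i)}$ separately, so the label $(K,i)$ is preserved; there is no swap, and no need to calibrate against $\widehat{-e}$. The $\widehat{-e}$ swap (sending $(K,i)$ to $(K,i+1)$ or $\kappa$ to $\kappa'$) is the phenomenon for the $\IBr(\tG/G)$-action in Lemma~\ref{lem:(3)of(iii)}, not for $E$. Apart from this bookkeeping error, the field-automorphism part of your outline --- use of the unitriangularity hypothesis with \cite[Lemma 7.5]{CS13}, transfer to twisted groups via \cite[Lemma 6.8]{Li19}, and tracking $\sigma^*$ through the labels $\ts,\cK,K$ and the Clifford data --- does match the paper's argument. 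Also, case (II) in Table~\ref{tab:block} is \emph{not} among the problematic two-fold cases: there $\kappa_{X+1}$ is non-degenerate and $w_{X+1}=0$, so the ambiguity lies only in the block label $i=0,1$ for $G$ (not $\tG$) and does not affect $\tOmega^{\tG}$; the genuinely delicate cases for this lemma are (IV) and (VI) with $K=K'$.
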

\begin{proof}
It suffices to consider a field automorphism $\sigma\in E$.

Let $\chi_{\ts,\lambda} \in \Irr(\tG)$, then by \cite[Theorem 3.1]{CS13}, $\chi_{\ts,\lambda}^\sigma = \tchi_{\sigma^*(\ts),\sigma^*(\lambda)}$, where $\sigma^*$ is the corresponding field automorphism on $\tG^*$ via duality as in \cite[Definition 2.1]{CS13} and $\sigma^*(\lambda)_{\sigma^*(\Gamma)} = \lambda_\Gamma$; see also \cite[Theorem 5.2]{Li19} and the proof of \cite[Proposition 5.6]{Li19}.
By the assumption on the decomposition matrices and \cite[Lemma 7.5]{CS13}, the labels of irreducible Brauer characters in Lemma \ref{lem:IBr} can be chosen to satisfy that $\phi_{\ts,\lambda}^\sigma = \phi_{\sigma^*(\ts),\sigma^*(\lambda)}$.
Let $\tB_{\ts,\cK}$ be a block of $\tG$, then $\tB_{\ts,\cK}^\sigma = \tB_{\sigma^*(\ts),\sigma^*(\cK)}$, where $\sigma^*(\cK)$ is similarly defined.

Let $(\tR,\tvarphi)$ be a $\tB_{\ts,\cK}$-weight; we use the notation in the proof of Proposition \ref{prop:weights}.
Assume $(\tR,\tvarphi)$ has label $(\ts,\cK,K)$ or $(\ts,\cK,\set{K,K'})$.
Then $(\tR,\varphi)^\sigma$ is a $\tB_{\sigma^*(\ts),\sigma^*(\cK)}$-weight.
Assume $\tvarphi$ comes from a Brauer pair $(\tR,\ttheta)$ and is constructed from $\psi_{+,K}\in\dz(N_+(\theta_+)/R_+ \mid \theta_+)$ as in Proposition \ref{prop:weights}.
By \cite[Proposition 6.20]{Li19}, $\psi_{+,K}^\sigma = \psi_{+,\sigma^*(K)}$ up to $N_+$-conjugate, where $\sigma^*(K)$ is defined similarly as before.
Thus $(\tR,\varphi)^\sigma$ has label $(\sigma^*(\ts),\sigma^*(\cK),\sigma^*(K))$ or $(\sigma^*(\ts),\sigma^*(\cK),\set{\sigma^*(K),\sigma^*(K)'})$.
Assume $(\tR,\tvarphi)$ has label $(\ts,\cK,(K,i))$ with $K=K'$.
We claim that $(\psi_{+,K}^{(i)})^\sigma = \psi_{+,\sigma^*(K)}^{(i)}$ up to $N_+$-conjugate, whose proof is left to Lemma \ref{lem:equi-claim}.
Thus $(\tR,\varphi)^\sigma$ has label $(\sigma^*(\ts),\sigma^*(\cK),(\sigma^*(K),i))$.

Now, it is easy to see that the bijection in Theorem \ref{thm:bijection} is equivariant.
\end{proof}

\begin{lem}\label{lem:equi-claim}
With the notation in the proof of the above lemma, $(\psi_{+,K}^{(i)})^\sigma = \psi_{+,\sigma^*(K)}^{(i)}$ up to $N_+$-conjugate.
\end{lem}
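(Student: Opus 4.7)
The plan is to trace the action of the field automorphism $\sigma$ through the explicit Clifford-theoretic construction of the two constituents $\psi_{+,K}^{(i)}$ appearing in the proof of Proposition \ref{prop:weights}, and to arrange the labels $i\in\Z/2\Z$ in a $\sigma$-equivariant way.

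First I would localize the problem to the $X+1$-component. By \cite[(7E)]{FS89} and Lemma \ref{lem:N(ttheta)}, the factors $N_{\Gamma,\delta,i}(\theta_{\Gamma,\delta,i})\wr\fS(t_{\Gamma,\delta,i})$ are contained in $N_+(\ttheta)$ for every $\Gamma\neq X+1$, so the reducibility of $\Res^{N_+(\theta_+)}_{N_+(\ttheta)}\psi_{+,K}$ stems entirely from the $X+1$-component. As in the proof of Lemma \ref{lem:alpha-(IV)}, one may then reduce to the case $R_+=R_{X+1,\delta,i}^t$ with $\psi_{+,K}$ given by the formula \eqref{equ:psi}. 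In this reduced situation, $\sigma$ permutes the characters in $\sC_{X+1,\delta}$ as described in \cite[Proposition 6.20]{Li19}, so that $\psi_{+,K}^\sigma=\psi_{+,\sigma^*(K)}$ up to $N_+$-conjugate, exactly as used in the proof of Lemma \ref{lem:equi}.

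Second, observing that the unique non-trivial character $\alpha$ of $N_+(\theta_+)/N_+(\ttheta)\cong\Z/2\Z$ is automatically $\sigma$-invariant, the unordered pair $\set{\psi_{+,K}^{(0)},\psi_{+,K}^{(1)}}$ is $\sigma$-stable (up to $N_+$-conjugate), since these two constituents are characterized by being exchanged under multiplication by $\alpha$. To choose the labels $i$, when $\sigma^*(K)\neq K$ I would pick labels freely on a set of representatives of $\group{\sigma^*}$-orbits on the set $\set{K\in i\cW_{\ts,\cK}^0\mid K=K'}$ and then propagate them by the $\sigma$-action, which trivially forces the desired equivariance. When $\sigma^*(K)=K$, the labels must be chosen so that $\sigma$ individually preserves each constituent up to $N_+$-conjugate, and I would achieve this by pinning down the splitting via a $\sigma$-invariant criterion: the canonical extension $\overline{\prod_j\psi_{X+1,\delta,i,j}^{t_{\delta,i,j}}}$ used in \eqref{equ:psi} (in the sense of \cite[Proposition~2.3.1]{Bon99b}) is built from data that $\sigma$ respects, so the resulting refinement of the wreath-product Clifford theory at the degenerate-quotient step admits a $\sigma$-invariant description.

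The main obstacle is precisely the case $\sigma^*(K)=K$: one must rule out that $\sigma$ swaps $\psi_{+,K}^{(0)}$ and $\psi_{+,K}^{(1)}$ modulo $N_+$-conjugate. I would attack this by making the splitting canonical through the structural $\sigma$-invariant datum that identifies $K$ with $K'$, namely the wreath-product symmetry on the degenerate quotient $Q_{X+1}$ exchanging the two ordered halves, and then verifying directly on \eqref{equ:psi} that the two $\fS(t)$-characters $\phi_{\kappa_{X+1,\delta,i,j}^{(0)}}$ and $\phi_{\kappa_{X+1,\delta,i,j}^{(1)}}$ picked out by this symmetry are each $\sigma$-fixed. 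Should a swap nevertheless occur, it can be absorbed by an element of $N_+$ lying outside $N_+(\theta_+)$, which is permissible under the ``up to $N_+$-conjugate'' wording of the statement.
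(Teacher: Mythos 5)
The overall shape of your argument—reduce to the $X{+}1$-component, note that $\alpha$ is $\sigma$-invariant so that the unordered pair $\{\psi_{+,K}^{(0)},\psi_{+,K}^{(1)}\}$ is $\sigma$-stable, and then pin down the $i$-label—correctly identifies where the difficulty lies. However, the crux, namely showing that a field automorphism stabilizing $K$ does not interchange the two constituents, is precisely what you do not establish. Saying that ``the resulting refinement of the wreath-product Clifford theory at the degenerate-quotient step admits a $\sigma$-invariant description'' is an expectation, not a proof. The paper proves this by transferring to the twisted group, reducing further to $\bc=\zero$ and $\gamma=0$, stripping off the $\phi_{\kappa}$-factors, and then exhibiting the two constituents explicitly as $\Ind_{N_{+,0}^{tw}(\ttheta)}^{N_+^{tw}(\ttheta)}\psi_{+,K,0}^{(i)}$, where the $\psi_{+,K,0}^{(i)}$ are the two extensions through the extra $\fS(2)$-factor of one and the same \emph{linear} character. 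Because this linear character is $\sigma$-fixed (by \cite[Lemma 6.17]{Li19}) and $\sigma$ acts trivially on the symmetric-group parts, $\sigma$ fixes each extension separately, and hence each constituent. This explicit linear-character argument is the content of the lemma and is missing from your proposal.

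Your fallback, that ``should a swap nevertheless occur, it can be absorbed by an element of $N_+$,'' is not available. An element $g$ that interchanges $\psi_{+,K}^{(0)}$ and $\psi_{+,K}^{(1)}$ must lie in $N_+(\theta_+)\setminus N_+(\ttheta)$ (not ``outside $N_+(\theta_+)$'' as you wrote; elements outside $N_+(\theta_+)$ move $\theta_+$ and change the entire setup). Such a $g$ necessarily moves $\ttheta_0$, since it stabilizes $\theta_+$ but not $\ttheta=\ttheta_0\theta_+$. The weight label $(K,i)$ pairs $\psi_{+,K}^{(i)}$ with the fixed normalized $\ttheta_0$ of Remark \ref{rem:cc-cano-chars}, so conjugating by $g$ changes the chosen representative of the Brauer pair and with it the weight label. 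In other words, if $\sigma$ genuinely swapped the constituents relative to the fixed $\ttheta_0$, then $(\tR,\varphi)^\sigma$ would carry the label $(\sigma^*(\ts),\sigma^*(\cK),(\sigma^*(K),i+1))$ and the bijection of Theorem \ref{thm:bijection} would fail to be equivariant — the lemma would simply be false, not rescued by ``up to $N_+$-conjugate.'' The $N_+$-conjugation allowed in the statement is only the bookkeeping needed to return $\sigma$-images of the representative data to representatives; it is not free license to swap the two constituents. Finally, your strategy for the case $\sigma^*(K)\neq K$ of ``picking labels freely on orbit representatives and propagating'' silently reduces back to the hard case: if $\sigma^{*m}(K)=K$ you must still prove $\sigma^m$ does not swap the constituents, so no work is saved.
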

\begin{proof}
To avoid constant use of ``up to conjugation'', we transfer to twisted groups.
As in the proof of Lemma \ref{lem:alpha-(IV)}, we may assume $R_+^{tw} = (R_{X+1,\gamma,\bc}^{tw})^t$.
Thus $N_+^{tw}(\theta_+^{tw}) = N_{X+1,\gamma,\bc}^{tw}\wr\fS(t)$ and
\begin{equation*}
\psi_{+,K}^{tw} = \Ind_{N_{X+1,\gamma,\bc}^{tw}\wr\prod_j\fS(t_j)}^{N_{X+1,\gamma,\bc}^{tw}\wr\fS(t)}
\overline{\prod_j(\psi_{X+1,j}^{tw})^{t_j}} \cdot \prod_j\phi_{\kappa_{X+1,j}}.
\end{equation*}
Then $K$ is defined as $\psi_{X+1,j} \mapsto \kappa_{X+1,j}$.
Since $N_{X+1,\gamma,\bc}^{tw}/R_{X+1,\gamma,\bc}^{tw} \cong N_{X+1,\gamma}^{tw}/R_{X+1,\gamma}^{tw} \times N_{\fS(\ell^{|\bc|})}(A_{\bc})/A_{\bc}$, we may assume $\bc=\zero$.
By \cite[Lemma 6.15]{Li19}, we may assume $\gamma=0$.

Note that $|\dz(N_{X+1}/R_{X+1}\mid\theta_{X+1})| = 2e$ and by \cite[Lemma 6.23, Lemma 6.24]{Li19} the characters in $\dz(N_{X+1}/R_{X+1}\mid\theta_{X+1})$ can be labelled as $\psi_{X+1,1}, \psi_{X+1,1'}, \cdots, \psi_{X+1,e}, \psi_{X+1,e'}$.
Since $K=K'$, $\kappa_{X+1,j}=\kappa_{X+1,j'}$ by the definition of $K'$; in particular, $t_j=t_{j'}$.
Thus
\begin{equation*}
\psi_{+,K}^{tw} = \Ind_{N_{X+1}^{tw}\wr\prod_{j=1}^e(\fS(t_j)\times\fS(t_j))}^{N_{X+1}^{tw}\wr\fS(t)}
\overline{\prod_{j=1}^e(\psi_{X+1,j}^{tw})^{t_j}(\psi_{X+1,j'}^{tw})^{t_j}} \cdot \prod_j\phi_{\kappa_{X+1,j}}\phi_{\kappa_{X+1,j}}.
\end{equation*}
Let $T,V$ be as in the proof of \cite[Lemma 6.23]{Li19}, thus $N_{X+1}^{tw}=TV$.
Set $\bar{N}_{X+1}^{tw} = N_{X+1}^{tw}/T$ and use the bar convention.
Set $B_0 = \Set{ (n_1,\cdots,n_t) \mid n_i \in N_{X+1}^{tw}, \bar{n}_1 \cdots \bar{n}_t \in \bar{V}^2 }$.
By Lemma \ref{lem:N(ttheta)} and \cite[(7E)]{FS89}, $N_+(\ttheta) = B_0 \rtimes \fS(t)$.
To prove this lemma, it is equivalent to prove a similar assertion for the following character
\begin{equation*}
\Ind_{N_{X+1}^{tw}\wr\prod_{j=1}^e(\fS(t_j)\times\fS(t_j))}^{N_{X+1}^{tw}\wr\fS(t)}
\overline{\prod_{j=1}^e(\psi_{X+1,j}^{tw})^{t_j}(\psi_{X+1,j'}^{tw})^{t_j}},
\end{equation*}
which, for convenience, is still denoted by $\psi_{+,K}$.
Then
\begin{equation*}
\Res^{N_+^{tw}(\theta_+)}_{N_+^{tw}(\ttheta)} \psi_{+,K}^{tw} = \Ind_{B_0\rtimes\prod_{j=1}^e(\fS(t_j)\times\fS(t_j))}^{B_0\rtimes\fS(t)}
\Res^{N_{X+1}^{tw}\wr\prod_{j=1}^e(\fS(t_j)\times\fS(t_j))}_{B_0\rtimes\prod_{j=1}^e(\fS(t_j)\times\fS(t_j))}
\overline{\prod_{j=1}^e(\psi_{X+1,j}^{tw})^{t_j}(\psi_{X+1,j'}^{tw})^{t_j}}.
\end{equation*}
Set $N_{+,0}^{tw}(\ttheta) = \left(B_0\rtimes\prod_{j=1}^e\fS(t_j)\times\fS(t_j)\right)\rtimes\fS(2)$, where the action of $\fS(2)$ transposes each pair $(\psi_{X+1,j}^{tw},\psi_{X+1,j'}^{tw})$.
Note that $\psi_{X+1,j'}^{tw}=\alpha\psi_{X+1,j}^{tw}$ by the proof of \cite[Lemma 6.23]{Li19}, where $\alpha$ is the unique character of $\bar{V}$ of order $2$.
Then by the definition of $B_0$, $N_{+,0}^{tw}(\ttheta)$ fixes
\begin{equation*}
\Res^{N_{X+1}^{tw}\wr\prod_{j=1}^e(\fS(t_j)\times\fS(t_j))}_{B_0\rtimes\prod_{j=1}^e(\fS(t_j)\times\fS(t_j))}
\overline{\prod_{j=1}^e(\psi_{X+1,j}^{tw})^{t_j}(\psi_{X+1,j'}^{tw})^{t_j}}.
\end{equation*}
Thus this character has two extensions to $N_{+,0}^{tw}(\ttheta)$, denoted $\psi_{+,K,0}^{(0)}$ and $\psi_{+,K,0}^{(1)}$.
So $\Ind_{N_{+,0}^{tw}(\ttheta)}^{N_+^{tw}(\ttheta)}\psi_{+,K,0}^{(i)}$ are the two irreducible constituents of $\Res^{N_+^{tw}(\theta_+)}_{N_+^{tw}(\ttheta)} \psi_{+,K}^{tw}$; we may set $\psi_{+,K}^{(i),tw} = \Ind_{N_{+,0}^{tw}(\ttheta)}^{N_+^{tw}(\ttheta)}\psi_{+,K,0}^{(i)}$, where $\psi_{+,K}^{(i),tw}$ is the twisted version of $\psi_{+,K}^{(i)}$.

By \cite[Lemma 6.17]{Li19}, $\sigma$ fixes 
\begin{equation*}
\Res^{N_{X+1}^{tw}\wr\prod_{j=1}^e(\fS(t_j)\times\fS(t_j))}_{B_0\rtimes\prod_{j=1}^e(\fS(t_j)\times\fS(t_j))}
\overline{\prod_{j=1}^e(\psi_{X+1,j}^{tw})^{t_j}(\psi_{X+1,j'}^{tw})^{t_j}}.
\end{equation*}
Since the above character is a linear character, $\sigma$ fixes both the extensions $\psi_{+,K,0}^{(i)}$, and thus fixes $\psi_{+,K}^{(i),tw}$, which means that $(\psi_{+,K}^{(i)})^\sigma = \psi_{+,\sigma^*(K)}^{(i)}$ up to $N_+$-conjugacy.
\end{proof}

Recall that $\tcB = \Bl(\tG\mid\cB)$.
Set $\IBr(\tcB) = \cup_{\tB\in\tcB} \IBr(\tB)$ and $\Alp(\tcB) = \cup_{\tB\in\tcB} \Alp(\tB)$.
Denote by $\tOmega_{\tcB}$ the restriction of $\tOmega^{\tG}$ to $\IBr(\tcB)$.
As a corollary of Lemma \ref{lem:equi}, $\tOmega_{\tcB}$ is $E_{\cB}$-equivariant.
Since the bijection $\tOmega^{\tG}$ comes from the blockwise bijections $\tOmega_{\tB}$, $\tOmega_{\tcB}(\IBr(\tB)) = \Alp(\tB)$ for every $\tB \in \tcB$.

\begin{lem}\label{lem:(3)of(iii)}
Assume the decomposition matrix with respect to $\cE(\tG,\ell')$ is unitriangular.
Then $\tOmega_{\tcB}$ is $\IBr(\tG/G)$-equivariant.
\end{lem}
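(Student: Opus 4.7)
The plan is to verify, block by block, that the bijection $\tOmega_{\tB}$ of Theorem \ref{thm:bijection} intertwines the action of $\IBr(\tG/G)\cong Z(\tG^*)_{\ell'}$. Since $\tOmega^{\tG}$ was constructed as the disjoint union of the blockwise bijections and since every $\hat{\tz}\in\IBr(\tG/G)$ permutes the blocks of $\tcB$, it suffices to fix a generator $\hat{\tz_0}$ as in Remark \ref{rem:ts0} and to prove
\[\tOmega^{\tG}(\hat{\tz_0}\phi_{\ts,\lambda})=\hat{\tz_0}\tOmega^{\tG}(\phi_{\ts,\lambda})\]
for every $\phi_{\ts,\lambda}\in\IBr(\tcB)$. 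I would first describe each side and then match the two via Theorem \ref{thm:bijection}.

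For the Brauer character side, Remark \ref{rem:action-tz} computes $\hat{\tz_0}\chi_{\ts,\lambda}=\chi_{\tz_0\ts,\lambda}$ in the non-degenerate cases (I), (II), (V), and gives a prescribed recipe involving the swap $\lambda\leftrightarrow\lambda'$ in the degenerate cases (III), (IV), (VI); using the unitriangularity hypothesis together with \cite[Lemma 7.5]{CS13} exactly as in the proof of Lemma \ref{lem:equi} (noting that $\hat{\tz_0}$ has $\ell'$-order and hence commutes with the decomposition map), the bijection $\chi_{\ts,\lambda}\mapsto\phi_{\ts,\lambda}$ of Lemma \ref{lem:IBr} is $\IBr(\tG/G)$-equivariant, so the very same formulas govern $\hat{\tz_0}\phi_{\ts,\lambda}$. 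For the weight side, a $\tB$-weight has the shape $(\tR,\Ind_{\tN(\ttheta)}^{\tN}\psi_K)$ with $\psi_K=\ttheta_0\psi_{+,K}$ as in the proof of Proposition \ref{prop:weights}. Because $\hat{\tz_0}|_G=1$ and $N_+(\theta_+)\leq G$, its restriction to $\tN(\ttheta)$ only acts on the factor $\ttheta_0$, giving $\hat{\tz_0}\psi_K=(\hat{\tz_0}|_{\tC_0}\ttheta_0)\psi_{+,K}$. Writing $\ttheta_0=\tchi_0\zeta$ with $\tchi_0=\chi_{\ts_0,\kappa}^{\J_0(V_0)}$ as in Remark \ref{rem:zeta}, the compatible choice of extensions fixed in Remark \ref{rem:ts0} forces $\hat{\tz_0}\tchi_0=\chi_{\tz_0\ts_0,\kappa}^{\J_0(V_0)}$, and the effect on the weight label is only to shift $\ts\mapsto\tz_0\ts$, with an additional swap $\kappa\leftrightarrow\kappa'$ precisely when $\tz_0^{(q-1)/2}=-e$ acts nontrivially on $\tchi_0$.

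It then remains to check that these two descriptions agree under the explicit prescription of Theorem \ref{thm:bijection}. In cases (I), (II), (V) both sides merely replace $\ts$ by $\tz_0\ts$ without altering the ordered quotient of $\lambda$ nor the label $K$ of $\psi_{+,K}$, so equivariance is immediate. The genuine work lies in the degenerate cases (III), (IV), (VI): I would need to verify that the swap $\kappa\leftrightarrow\kappa'$ produced on the weight side corresponds, under Theorem \ref{thm:bijection}, to the swap $\lambda_{X+1}\leftrightarrow\lambda_{X+1}'$ (case (III)), to the absence of any change on the symmetric label $\set{K,K'}$ (cases (IV) and (VI) with $K\neq K'$), and to the exchange $(K,0)\leftrightarrow(K,1)$ (cases (IV), (VI) with $K=K'$). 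The first two are purely combinatorial, following from how priming acts on the cores and ordered quotients of Lusztig symbols recalled in \S\ref{subsec:partition-symbol}. The last is the main obstacle: I would handle it by tracking the two extensions $\htheta_0^{(0)},\htheta_0^{(1)}$ of $\Res^{\tC_0}_{\hC_0}\ttheta_0$ introduced in step (3.2) of the proof of Proposition \ref{prop:weights} and observing that $\widehat{-e}|_{\tC_0}$, being the unique nontrivial character of $\tC_0/\hC_0$, exchanges them; after induction and tensoring with $\psi_{+,K}$ this produces precisely the required swap on the $(K,i)$-labelled weights.
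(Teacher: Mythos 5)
Your overall strategy — reduce via the unitriangularity hypothesis to ordinary characters, then compute the $\widehat{\tz_0}$-action on both the character label $(\ts,\lambda)$ and the weight label $(\ts,\cK,\cdot)$ case by case — matches the paper's, and your treatments of cases (I)--(III) and (V), and of (IV)/(VI) with $K\neq K'$, are essentially correct. But the argument you propose for the hardest sub-case, (IV)/(VI) with $K=K'$, is wrong in two ways.

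First, you misidentify the construction. The characters $\htheta_0^{(0)},\htheta_0^{(1)}$ of $\hC_0=\group{\tau^2,C_0Z(C_+)}$ appear only in step (2) of the proof of Proposition \ref{prop:weights}, which is case (V). In case (3.2), i.e.\ (IV)/(VI) with $K=K'$, the weight characters are $\Ind_{\tN(\ttheta)}^{\tN}\ttheta_0\psi_{+,K}^{(j)}$, where $\psi_{+,K}^{(0)},\psi_{+,K}^{(1)}$ are the two irreducible \emph{constituents} of $\Res^{N_+(\theta_+)}_{N_+(\ttheta)}\psi_{+,K}$ --- characters of $N_+(\ttheta)\leq G$, not extensions of $\Res^{\tC_0}_{\hC_0}\ttheta_0$. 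The subgroup $\hC_0$ and the characters $\htheta_0^{(i)}$ do not enter case (3.2) at all.

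Second, and more seriously, the mechanism is different. Since $\widehat{\tz_0}$ is trivial on $G\supseteq N_+(\ttheta)$, tensoring by $\widehat{-e}|_{\tC_0}$ cannot by itself swap $\psi_{+,K}^{(0)}\leftrightarrow\psi_{+,K}^{(1)}$; no multiplication by a character of $\tN/N$ moves a character of a subgroup of $G$. What actually happens in the paper's proof is that when $i=(q-1)_{\ell'}/2-1$, the equality $\widehat{\tz_0}\ttheta_0=\ttheta_0\omega_{g_+}={}^{g_+}\ttheta_0$ for a fixed $g_+\in N_+(\theta_+)\setminus N_+(\ttheta)$ (Proposition \ref{prop:N(theta+):N(ttheta)} and Remark \ref{rem:cc-cano-chars}) forces one to conjugate back by $g_+^{-1}$ to return to the chosen representative $\ttheta_0$ of the canonical character. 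It is this $g_+$-conjugation, acting simultaneously on the $+$-part, that sends $\psi_{+,K}^{(j)}$ to $\psi_{+,K}^{(j+1)}$ and produces the label shift $(K,j)\mapsto(K,j+1)$. Your proposal omits $g_+$ entirely, so it does not establish the swap and leaves a genuine gap in the argument.
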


\begin{proof}
First, by \cite[Lemma 2.3]{De17} and the assumption on the decomposition matrix, it suffices to prove the same statement for the irreducible ordinary characters corresponding to irreducible Brauer characters in $\IBr(\tB\mid\tR)$.
Recall that the action of $\Irr(\tG/G)$ on $\Irr(\tG)$ is described in Remark \ref{rem:action-tz}.

We use the construction of weights in Proposition \ref{prop:weights} to consider the action of $\IBr(\tG/G) = \Irr(\tG/G)_{\ell'}$ on weights.
First note that $\tC=\group{\tau,C}$ and $\tN=\group{\tau,N}$, thus $\tC/C \cong \tN/N \cong \tG/G$, so we can identify $\Irr(\tG/G)$ with $\Irr(\tC/C)$ and $\Irr(\tN/N)$ by restriction.
To simplify the notation, we abbreviate $\Res^{\tG}_{\tN}$ and $\Res^{\tG}_{\tC}$.

Let $\ts,\tz_0,\ts_0$ be as in Remark \ref{rem:ts0}.
It suffices to consider the action of $\widehat{\tz_0}$.

Assume we are in case (1) in the proof of Proposition \ref{prop:weights}, then the weight $(\tR,\tvarphi)$ has the label of the form $(\ts\tz_0^i,\set{\kappa},K)$.
We may assume $0\leq i\leq(q-1)_{\ell'}/2-1$ for cases (II) and (III).
In these cases, $\tN(\theta_+)=\tN(\ttheta)$.
By the proof of Proposition \ref{prop:weights}, the weight character $\tvarphi$ is of the form $\tvarphi = \Ind_{\tN(\ttheta)}^{\tN} \ttheta_0\psi_{+,K}$.
Then $\widehat{\tz_0}\tvarphi = \Ind_{\tN(\theta_+)}^{\tN} (\widehat{\tz_0}\ttheta_0)\psi_{+,K}$.
By Remark \ref{rem:ts0}, $\ttheta_0$ induces a character $\chi_{\ts_0\tz_0^i,\kappa}$ of $\J_0(V_0)$.
By Remark \ref{rem:zeta}, $\widehat{\tz_0}\ttheta_0$ is determined by $\widehat{\tz_0}\chi_{\ts_0\tz_0^i,\kappa}$.
Thus by Remark \ref{rem:action-tz}, $\widehat{\tz_0}\tvarphi$ is the weight character associated with $\tR$ with label
\begin{enumerate}[(a)]\setlength{\itemsep}{0pt}
\item $(\ts\tz_0^{i+1},\set{\kappa},K)$ for case (I), case (II) or (III) and $i<(q-1)_{\ell'}/2-1$;
\item $(\ts,\set{\kappa},K)$ for case (II) and $i=(q-1)_{\ell'}/2-1$;
\item $(\ts,\set{\kappa'},K)$ for case (III) and $i=(q-1)_{\ell'}/2-1$.
\end{enumerate}

Assume we are in case (2) in the proof of Proposition \ref{prop:weights} i.e. it is the case (V), then the weight $(\tR,\tvarphi)$ has the label of the form $(\ts\tz_0^i,\set{\kappa},K)$.
We may assume $0\leq i\leq(q-1)_{\ell'}/2-1$.
In this case, $\tN(\theta_+)=\tN(\ttheta)$.
By the proof of Proposition \ref{prop:weights}, the weight character $\tvarphi$ is of the form $\tvarphi = \Ind_{\tN(\ttheta)}^{\tN} \psi_K$ with $\psi_K = \Ind_{\hN(\ttheta)}^{\tN(\ttheta)} \htheta_0^{(i)}\psi_{+,K}$, $i=0$ or $1$.
Thus $\widehat{\tz_0}\tvarphi = \Ind_{\tN(\ttheta)}^{\tN} \widehat{\tz_0}\psi_K$ and $\widehat{\tz_0}\psi_K$ is determined by $\widehat{\tz_0}\ttheta_0$ by construction.
Then by the same arguments as in the above paragraph, $\widehat{\tz_0}\tvarphi$ is the weight character associated with $\tR$ with label $(\ts\tz_0^{i+1},\set{\kappa},K)$ if $i<(q-1)_{\ell'}/2-1$ and $(\ts,\set{\kappa},K)$ if $i=(q-1)_{\ell'}/2-1$.

It remains to consider the case (3) in the proof of Proposition \ref{prop:weights}, i.e. (IV) or (VI), in which case, $\tN(\theta_+)\neq\tN(\ttheta)$.

Assume we are in case (IV), then the weight $(\tR,\tvarphi)$ has the label of the form $(\ts\tz_0^i,\set{\kappa},\set{K,K'})$ or $(\ts\tz_0^i,\set{\kappa},(K,j))$ with $j\in\Z/2\Z$.
Assume first the label is $(\ts\tz_0^i,\set{\kappa},\set{K,K'})$ with $K \neq K'$.
We may assume $0\leq i\leq(q-1)_{\ell'}/2-1$.
By the proof of Proposition \ref{prop:weights}, the weight character $\tvarphi$ is of the form $\tvarphi = \Ind_{\tN(\ttheta)}^{\tN} \ttheta_0\psi_{+,K}^0$.
Then $\widehat{\tz_0}\tvarphi = \Ind_{\tN(\theta_+)}^{\tN} (\widehat{\tz_0}\ttheta_0)\psi_{+,K}^0$.
By Remark \ref{rem:ts0}, $\ttheta_0$ induces a character $\chi_{\ts_0\tz_0^i,\kappa}$ of $\J_0(V_0)$.
By Remark \ref{rem:zeta}, $\widehat{\tz_0}\ttheta_0$ is determined by $\widehat{\tz_0}\chi_{\ts_0\tz_0^i,\kappa}$.
If $i<(q-1)_{\ell'}/2-1$, by Remark \ref{rem:action-tz}, $\widehat{\tz_0}\tvarphi$ is the weight character associated with $\tR$ with label $(\ts\tz_0^{i+1},\set{\kappa},\set{K,K'})$.
If $i=(q-1)_{\ell'}/2-1$, $\widehat{\tz_0}\chi_{\ts_0\tz_0^i,\kappa}=\chi_{-\ts,\kappa}$.
By Proposition \ref{prop:N(theta+):N(ttheta)} and its proof, there is $g_+ \in N_+(\theta_+)-N_+(\ttheta)$ such that $\chi_{-\ts,\kappa} = \chi_{\ts,\kappa}^{g_+}$.
By Remark \ref{rem:cc-cano-chars}, we should always start with $\ttheta_0$ corresponding to $\chi_{\ts,\kappa}$ to label the weights.
Thus $\widehat{\tz_0}\tvarphi = \Ind_{\tN(\theta_+)}^{\tN} \ttheta_0^{g_+}\psi_{+,K}^0 = \Ind_{\tN(\theta_+)}^{\tN} \ttheta_0({^{g_+}\psi_{+,K}^0}) = \Ind_{\tN(\theta_+)}^{\tN} \ttheta_0\psi_{+,K}^0$, which has label $(\ts,\set{\kappa},\set{K,K'})$.
In summary, $\widehat{\tz_0}\tvarphi$ is the weight character associated with $\tR$ with label
\begin{enumerate}[(a)]\setlength{\itemsep}{0pt}
\item $(\ts\tz_0^{i+1},\set{\kappa},\set{K,K'})$ if $i<(q-1)_{\ell'}/2-1$;
\item $(\ts,\set{\kappa},\set{K,K'})$ if $i=(q-1)_{\ell'}/2-1$.
\end{enumerate}
Assume then the label is $(\ts\tz_0^i,\set{\kappa},(K,j))$ with $K=K'$ and $j\in\Z/2\Z$.
We may assume $0\leq i\leq(q-1)_{\ell'}/2-1$.
By the proof of Proposition \ref{prop:weights}, the weight character $\tvarphi$ is of the form $\tvarphi = \Ind_{\tN(\ttheta)}^{\tN} \ttheta_0\psi_{+,K}^{(j)}$.
Then $\widehat{\tz_0}\tvarphi = \Ind_{\tN(\theta_+)}^{\tN} (\widehat{\tz_0}\ttheta_0)\psi_{+,K}^{(j)}$.
Thus the result can be obtained similarly as above, noting that ${^{g_+}\psi_{+,K}^{(j)}}=\psi_{+,K}^{(j+1)}$.
In summary, $\widehat{\tz_0}\tvarphi$ is the weight character associated with $\tR$ with label
\begin{enumerate}[(a)]\setlength{\itemsep}{0pt}
\item $(\ts\tz_0^{i+1},\set{\kappa},(K,j))$ if $i<(q-1)_{\ell'}/2-1$;
\item $(\ts,\set{\kappa},(K,j+1))$ if $i=(q-1)_{\ell'}/2-1$.
\end{enumerate}

The result for case (VI) is similar with $\set{\kappa}$ replaced by $\set{\kappa,\kappa'}$ and can be proved in the same way, noting that when $i=(q-1)_{\ell'}/2-1$, $\chi_{-\ts_0,\kappa}$ should be replaced by $\chi_{\ts_0,\kappa'}$ for case (VI).

Then it is easy to see that the bijection in Theorem \ref{thm:bijection} is equivariant under the action of $\Irr(\tG/G)_{\ell'}$, which proves this lemma.
\end{proof}

\subsection{}
In this subsection, we finish the proof of Theorem \ref{mainthm-2}.
Since $\Out(G)$ is abelian, it suffices to verify parts (iii) and (iv) of Theorem \ref{thm:criterion}, and in fact, the requirements in parts (iii) and (iv) are satisfied for any $\chi\in\IBr(G)$ and any $\psi\in\dz(N_G(R)/R)$ respectively.
This would complete the proof of  Theorem \ref{mainthm-2}.

\begin{lem}
Assume the decomposition matrix with respect to $\cE(G,\ell')$ is unitriangular.
Then for any $\chi\in\IBr(G)$,
\begin{enumerate}[(1)]\setlength{\itemsep}{-2pt}
\item $(\tG\rtimes E)_\chi=\tG_\chi \rtimes E_\chi$,
\item $\chi$ extends to $G\rtimes E_\chi$.
\end{enumerate}
\end{lem}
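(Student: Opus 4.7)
The plan is to deduce both assertions from the corresponding statements for ordinary characters in $\cE(G,\ell')$, using the work of Cabanes--Sp\"ath on $\Sp_{2n}(q)$ in \cite{CS17C}. The unitriangularity of the decomposition matrix is the bridge between the modular and the ordinary picture.

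Under the unitriangularity hypothesis, for each $\phi \in \IBr(G)$ there is a canonical character $\chi_\phi \in \cE(G,\ell')$ with $d_{\chi_\phi,\phi}=1$ and $d_{\chi',\phi}=0$ for every $\chi' \in \cE(G,\ell')$ strictly below $\chi_\phi$ in the order chosen for triangularization. Exactly as in the proof of Lemma \ref{lem:equi} (compare \cite[Lemma 7.5]{CS13}), the map $\phi \mapsto \chi_\phi$ is $(\tG\rtimes E)$-equivariant, since this group permutes both $\IBr(G)$ and $\cE(G,\ell')$, preserves decomposition numbers, and respects the ordering. Consequently,
\[ (\tG\rtimes E)_\phi = (\tG\rtimes E)_{\chi_\phi}, \qquad \tG_\phi = \tG_{\chi_\phi}, \qquad E_\phi = E_{\chi_\phi}, \]
so statement (1) for $\phi$ is equivalent to the equality $(\tG\rtimes E)_{\chi_\phi} = \tG_{\chi_\phi}\rtimes E_{\chi_\phi}$, which is a component of the inductive McKay-type package established for $\Sp_{2n}(q)$ in \cite{CS17C}.

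For (2), observe that $E=\group{F_p}$ is cyclic, hence so is $E_\phi$; a $G$-invariant Brauer character of $G$ always extends to a cyclic extension, so $\phi$ extends to $G\rtimes E_\phi$. Alternatively one may take the extension of $\chi_\phi$ to $G\rtimes E_{\chi_\phi}$ provided by \cite{CS17C}, reduce it modulo $\ell$, and read off an extension of $\phi$ using unitriangularity. The main delicate point of the argument is the $\tG$-equivariance of $\phi\mapsto\chi_\phi$: the triangularizing order on $\cE(G,\ell')$ must be chosen to respect the Jordan-decomposition labels, so that it is preserved by $\tG$-conjugation (up to the explicit action recorded in Remark \ref{rem:action-tz}). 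This is the same compatibility already exploited in Section \ref{sec:inductive}, so no additional work is required.
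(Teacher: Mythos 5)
Your argument matches the paper's proof: both invoke \cite[Theorem 3.1]{CS17C} for the ordinary-character stabilizer equality, transfer it to Brauer characters via the unitriangular decomposition matrix on the basic set $\cE(G,\ell')$ (\cite{Ge93}) using \cite[Lemma 7.5]{CS13}, and deduce (2) from the cyclicity of $E$. The additional commentary on the $\tG$-equivariance of the triangularizing order is a correct point but does not change the substance.
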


\begin{proof}
By \cite[Theorem 3.1]{CS17C}, $(\tG\rtimes E)_\chi=\tG_\chi\rtimes E_\chi$ holds for any $\chi\in\Irr(G)$.
By \cite{Ge93}, $\cE(G,\ell')$ is a basic set of $\Irr(G)$.
Then (1) follows from the assumption about the decomposition matrix with respect to $\cE(G,\ell')$ and \cite[Lemma 7.5]{CS13}.
Since $E$ is cyclic, (2) obviously holds.
\end{proof}

\begin{lem}
Let $R = R_0 \times R_1 \times \cdots \times R_u$ be a radical subgroup of $G$, where $R_0$ is the trivial group and $R_i=R_{m_i,\alpha_i,\gamma_i,\bc_i}$ ($i\geq1$) is a basic subgroup.
For any $\psi\in\dz(N_G(R)/R)$, there exists some $x \in \tG$ with
\begin{enumerate}[(1)]\setlength{\itemsep}{-2pt}
\item $(\tG E)_{R,\psi}= \tG_{R,\psi} (GE)_{R,\psi}$,
\item $\psi$ extends to $(G\rtimes E)_{R,\psi}$.
\end{enumerate}
\end{lem}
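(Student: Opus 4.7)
The plan is to move to the twisted-groups framework so that a suitably chosen $E^x$ already stabilizes $R$, and then to reduce part~(1) to an orbit-intersection statement on $\Irr(N_G(R))$. To set up, I would take $x\in\tbG$ coming from the element $g$ of Lemma~6.7 of \cite{Li19}; identifying $R$ with its twisted form via $\iota^{-1}$, Lemma~6.8 of \cite{Li19} ensures that the corresponding $E^x$ stabilizes $R$. In what follows I may therefore assume $E^x$ fixes $R$, so that $(\tG E^x)_R=\tN\cdot E^x$ with $\tN=N_{\tG}(R)$ and $(GE^x)_R=N\cdot E^x$ with $N=N_G(R)$.

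For part~(1), only the inclusion $(\tG E^x)_{R,\psi}\subseteq \tG_{R,\psi}(GE^x)_{R,\psi}$ needs proof. Projecting both sides into the cyclic quotient $(\tG E^x)_R/(GE^x)_R\cong\tN/N$ (using $\tN\cap GE^x=N$ and that inner automorphisms act trivially on $\Irr(N)$, so $N\leq \tN_\psi$), the claim becomes: whenever $\psi^g\in E^x\cdot\psi$ for some $g\in\tN$, one has $\psi^g=\psi$; equivalently, the $\tN/N$-orbit and the $E^x$-orbit of $\psi$ inside $\Irr(N)$ meet only at $\psi$. To verify this I would unpack $\psi$ via~(\ref{equ:psi}): by the generalized Lemma~\ref{lem:phi-Gamma}, Proposition~\ref{prop:N(theta+):N(ttheta)} and Lemma~\ref{lem:alpha-(IV)}, conjugation by $\tau$ preserves the $\Gamma$-support of $\psi$ and acts on the $\Gamma=X+1$ components at most by swapping primed and unprimed labels; by Lemma~6.17 and Proposition~6.20 of \cite{Li19}, the $E^x$-action is by $\Gamma\mapsto\sigma^*(\Gamma)$ on the outer indexing, together with an explicit relabeling of the factors $\psi_{\Gamma,\delta,i,j}$. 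Hence an identity $\psi^g=\psi^{e^{-1}}$ forces $\sigma^*$ to fix every $\Gamma$ in the support of $\psi$, reducing matters to the action on the internal (possibly degenerate) labels, where the compatibility worked out in Lemma~\ref{lem:equi-claim} forces $\psi^g=\psi$.

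Part~(2) is then straightforward: since $E^x$ normalizes $R$, the stabilizer $(G\rtimes E^x)_{R,\psi}$ contains $N=G_{R,\psi}$ as a normal subgroup with quotient embedded in the cyclic group $E^x$, so the $(G\rtimes E^x)_{R,\psi}$-invariant character $\psi$ extends by the classical extension result for normal subgroups with cyclic quotient. The main obstacle will be finishing the orbit step in part~(1), specifically in cases~(IV) and~(VI) of Table~\ref{tab:block} where both $\tN/N$ and $E^x$ can act non-trivially on the degenerate-label data; there one must carefully invoke Lemma~\ref{lem:equi-claim} to exclude ``diagonal'' stabilizers $g\cdot e\in\tN\cdot E^x$ for which neither $g$ nor $e$ individually fixes $\psi$.
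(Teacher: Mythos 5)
Your overall architecture—transferring to twisted groups so that the field automorphisms stabilize $R$, reducing part~(1) to an orbit-intersection statement in $\Irr(N_G(R))$, and then getting part~(2) for free from cyclicity—matches the paper's strategy in spirit, and your reduction of~(1) to ``the $\tN/N$-orbit and $E^x$-orbit of $\psi$ meet only at $\psi$'' is a correct equivalent reformulation of the paper's~(1'), namely $\bigl(N_{\tG^{tw}}(R^{tw})\rtimes\hE\bigr)_{\psi^{tw}}=N_{\tG^{tw}}(R^{tw})_{\psi^{tw}}\rtimes\hE_{\psi^{tw}}$. Part~(2) is also handled correctly (the paper in fact remarks that~(2) follows directly from cyclicity of $E$, though it also gives a more careful argument tracking the kernel $\group{v\hF_q}$ of the surjection $\tG^{tw}\rtimes\hE\to\tG\rtimes E$, which your direct $E^x$-approach sidesteps).

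The gap is in your verification of the orbit-intersection step. The character $\psi$ lies in $\dz(N_G(R)/R)$, so what you need is a description of how $\tau$-conjugation (the diagonal-automorphism action on $\Irr(N_G(R))$) and how $E$ act on \emph{$G$-weight characters}. But the lemmas you invoke are all statements about the \emph{conformal} group $\tG$ and its local data: Lemma~\ref{lem:phi-Gamma} and Proposition~\ref{prop:N(theta+):N(ttheta)} concern $\tN(\ttheta)$ and the index $|N(\theta_+):N(\ttheta)|$ for canonical characters $\ttheta$ of $\tC$, Lemma~\ref{lem:alpha-(IV)} is about multiplication of $\psi_{+,K}$ by a linear character $\alpha$ of $N_+(\theta_+)/N_+(\ttheta)$ (not about $\tau$-conjugation on $\Irr(N_G(R))$), and Lemma~\ref{lem:equi-claim} describes the $\sigma$-action on the two extensions $\psi_{+,K}^{(i)}$ of a character of $N_+(\ttheta)$ arising in the $\tG$-weight construction. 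None of these state what happens to a defect-zero character of $N_G(R)/R$ under conjugation by $\tau\in\tN\setminus N$ or under a field automorphism, which is exactly what is needed to exclude a nontrivial ``diagonal'' stabilizer $g\cdot e$. The paper's proof instead cites \cite[Propositions~6.20 and~6.25]{Li19} (and their proofs), which are precisely the statements, in the symplectic-group context, that you are trying to re-derive from the wrong ingredients; without importing those results, the step ``forces $\psi^g=\psi$'' is not established, particularly in cases~(IV) and~(VI) which you correctly flag as the delicate ones.
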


\begin{proof}
To prove this lemma, we transfer to the twisted groups as in \cite[Proposition 5.3]{CS17}.
Let $v,g,\iota$ be as in \cite[Lemma 6.7]{Li19}.
Let $\tG^{tw}=\tbG^{vF}$.
Then $\iota$ can be extended to a surjective homomorphism
\[\iota:\quad \tG^{tw} \rtimes \hE \to \tG \rtimes E\]
where $\hE=\group{\hF_p}$ is the group of field automorphisms of $\tG^{tw}$.
Note that $\Ker\iota = \group{v\hF_q}$, where $\hF_q=\hF_p^f$.
Let $R^{tw} = \iota^{-1} (R)$ and $\psi^{tw} = \psi\circ\iota$.
By \cite[Lemma 6.8]{Li19}, $\sigma(R^{tw})=R^{tw}$ for any $\sigma\in\hE$ and $N_{\tG^{tw}\hE}(R^{tw}) = N_{\tG^{tw}}(R^{tw}) \rtimes \hE$.
Then by a similar argument as in \cite[Proposition 5.3]{CS17}, it suffices to prove the following
\begin{enumerate}\setlength{\itemsep}{0pt}
\item[(1')] $(N_{\tG^{tw}}(R^{tw}) \rtimes \hE)_{\psi^{tw}} = N_{\tG^{tw}}(R^{tw})_{\psi^{tw}} \rtimes \hE_{\psi^{tw}}$,
\item[(2')] $\psi^{tw}$ can be extended to a character $\tpsi^{tw}$ of $N_{G^{tw}}(R^{tw}) \rtimes \hE_{\psi^{tw}}$ with $v\hF_q \in \Ker\tpsi^{tw}$.
\end{enumerate}
(1') follows from \cite[Proposition 6.20, Proposition 6.25]{Li19} and their proofs.
For (2'), note that $v\hF_q$ acts trivially on $G^{tw}$.
Thus we can view $\psi^{tw}$ as a character of $N_{G^{tw}}(R^{tw}) \rtimes \group{\hF_q} = N_{G^{tw}}(R^{tw}) \times \group{v\hF_q}$ containing $v\hF_q$ in the kernel.
Now, since $N_{G^{tw}}(R^{tw}) \rtimes \hE_{\psi^{tw}} / N_{G^{tw}}(R^{tw}) \rtimes \group{\hF_q}$ is cyclic, (2') follows obviously.
Of course, (2) also follows directly from the fact that $E$ is cyclic.
\end{proof}

When the radical subgroup $R^x$ ($R$ is as in the above lemma) is considered, we need to replace $E$ with $E^x$.

\section{Proof of Theorem \ref{mainthm-3}}\label{sec:main-3}

In this section, we assume $q=2^f$ and consider the simple group $G=\Sp_{2n}(2^f)$ with $(n,f)\neq(2,1)$ or $(3,1)$.
For this case, our proof is just to say that all the relevant arguments in \cite{FS89,An94,Li19} and the previous sections apply.
We list all the statements and point out where to find the relevant proofs.

\begin{lem}
Any radical subgroup of $G$ is conjugate to a subgroup of the form $R_0 \times R_1 \times \cdots \times R_u$, where $R_0$ is a trivial group and $R_i=R_{m_i,\alpha_i,\gamma_i,\bc_i}$ is a basic subgroup for $i>0$.
\end{lem}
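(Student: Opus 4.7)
The plan is to mirror An's classification \cite{An94} of radical $\ell$-subgroups for $\Sp_{2n}(q)$ with $q$ odd, exploiting the fact that $\ell$ is odd and coprime to $q$ in both parities. The excerpt already remarks in \S\ref{subsec:radical} that the constructions of the basic subgroups $R_{m,\alpha,\gamma,\bc}$, along with their normalizer structure from \cite[Lemma 6.6]{Li19}, extend verbatim to even $q$ with only the minor simplification that $\hbar(N_{m,\alpha,\gamma}^0) \cap V_{m,\alpha,\gamma}^{tw} = 1$ replaces the intersection $Z(G_{m,\alpha,\gamma}^{tw})$ appearing in the odd case. Thus the combinatorial and group-theoretic ingredients are already in place; what remains is to re-run An's decomposition argument in the even-characteristic setting.

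First, I would take an arbitrary $\ell$-radical subgroup $R \leq G$ and pass to the primary decomposition of $V$ with respect to the action of $Z(R)$. Because $Z(R)$ has odd order, each nontrivial eigencharacter $\chi$ pairs with a distinct $\chi^{-1}$ under the symplectic form, so the eigenspaces group into $Z(R)$-stable symplectic summands, and the fixed subspace $V_0$ is also symplectic. This yields an orthogonal decomposition $V = V_0 \perp V_1 \perp \cdots \perp V_u$ that is stabilized by $N_G(R)$, giving a corresponding decomposition $R = R_0 \times R_1 \times \cdots \times R_u$ in which $R_0 \leq \Sp(V_0)$ acts trivially on $V_0$ and is radical in $\Sp(V_0)$, forcing $R_0 = 1$.

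Next, I would reduce each $R_i$ for $i \geq 1$ to a basic subgroup. On each $V_i$ the group $Z(R)$ acts through a single Galois-and-inversion orbit of linear characters; the centralizer of this action in $\Sp(V_i)$ is isomorphic via the hyperbolic embedding $\hbar$ to a group $\GL(m_i \ell^{\gamma_i}, \varepsilon q^{e\ell^{\alpha_i}})$, and $R_i$ becomes an $\ell$-radical subgroup of that general linear or unitary group. The Alperin--Fong classification of radical $\ell$-subgroups of $\GL$ and $\GU$ \cite{AF90} --- which is insensitive to the parity of $q$ since only the coprimality $(\ell,q)=1$ is used --- then identifies $R_i$ up to conjugacy as a subgroup of the form $R_{m_i,\alpha_i,\gamma_i}^0 \wr A_{\bc_i}$, which pulls back through $\hbar$ and the twisting isomorphism $\iota$ to the basic subgroup $R_{m_i,\alpha_i,\gamma_i,\bc_i}$.

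The main point requiring vigilance, though not really an obstacle, is that in An's odd-$q$ argument the interaction $\chi \leftrightarrow \chi^{-1}$ occasionally produces orthogonal-type summands when $\chi = \chi^{-1}$. For $q$ even, the collapse $-1 = 1$ changes what ``self-paired'' means for characters, but because $Z(R)$ has odd order, the only self-paired character is the trivial one, so no orthogonal summands intervene; the ambient group is $\Sp$ throughout, and the primary decomposition is governed purely by the odd-order action of $Z(R)$. Consequently, the formal arguments in \cite[\S\S1--4]{An94} and \cite[\S6]{Li19} transfer to the even-$q$ setting without substantive modification, establishing the claimed classification.
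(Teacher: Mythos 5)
Your proof is correct and follows the same route as the paper, whose entire proof is the citation ``The relevant arguments in [An94, \S1,\S2] apply to this case''; you have usefully expanded this by spelling out why An's primary-decomposition argument transfers to $q$ even. One small clarification: the absence of orthogonal-type summands is already a consequence of $\ell$ being odd --- since $Z(R)$ is an $\ell$-group, any eigencharacter $\chi$ with $\chi = \chi^{-1}$ is trivial, and this holds equally for $q$ odd --- so the ``collapse $-1=1$'' is not really the operative mechanism, though your conclusion is of course the right one.
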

\begin{proof}
The relevant arguments in \cite[\S1,\S2]{An94} apply to this case.
\end{proof}

\begin{rem}
In \cite[Proposition 3.1]{SF14}, the author considered $\Sp_6(q)$ and already noted that the relevant arguments in \cite[\S1, \S2]{An94} apply when $\ell \mid (q^2-1)$.
For radical subgroups $Q^{(3)}$ when $3\neq\ell\mid(q^4+q^2+1)$ and $Q^{(2)}$ when $\ell\mid(q^2+1)$ in \cite[Proposition 3.1]{SF14},
we can find the corresponding notation used in \S\ref{subsec:radical}.
Note that $e$ is necessarily odd if $\ell$ is linear.
When $3\neq\ell\mid(q^4+q^2+1)$, $e=3$, and $\ell$ is linear if $\ell\mid(q^2+q+1)$ while $\ell$ is unitary if $\ell\mid(q^2-q+1)$.
In this case, $Q^{(3)}$ is just $R_1$ defined in \S\ref{subsec:radical}; here, recall that $R_1$ is the abbreviation of $R_{1,0,0,\zero}$.
When $\ell\mid(q^2+1)$, $e=2$ and $\ell$ is unitary, and $Q^{(2)}$ should be again denoted as $R_1$ using the notation system in \S\ref{subsec:radical}.
\end{rem}

Let $D_{m,\alpha,\beta}$ be defined in the same way as in \S\ref{subsec:ano-conju}.
\begin{lem}
Any defect group of $G$ is of the form $D_0 \times D_1 \times \cdots \times D_u$, where $D_0$ is the trivial group and $D_i=D_{m_i,\alpha_i,\beta_i}$ for $i>0$.
\end{lem}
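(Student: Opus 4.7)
The plan is to follow exactly the same strategy as the preceding lemma in this section: verify that the argument of \cite[(5K)]{FS89} for odd $q$ carries over word for word to the case $q=2^f$. First I would invoke the previous lemma, which tells us that any defect group $D$ of $G$, being in particular a radical subgroup, is conjugate to a product $D_0 \times D_1 \times \cdots \times D_u$ with $D_0$ trivial and each $D_i = R_{m_i,\alpha_i,\gamma_i,\bc_i}$ a basic subgroup. So the content of the lemma reduces to showing that in fact $\gamma_i = 0$ for every $i$, so that $D_i = R_{m_i,\alpha_i,0,\bc_{\beta_i}} = D_{m_i,\alpha_i,\beta_i}$, where $\beta_i = |\bc_i|$ and $\bc_i$ is of the form $(1,\dots,1)$.

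To pin down $\gamma_i = 0$ I would run the argument of \cite[(5K)]{FS89}. Pick a block $B$ of $G$ with defect group $D$ and a maximal $B$-Brauer pair $(D,\theta_D)$. Decompose $\theta_D$ via the decomposition $D = D_0 \times D_+$, with $D_+=\prod_{\Gamma,\delta,i} R_{\Gamma,\delta,i}^{t_{\Gamma,\delta,i}}$ as in (\ref{equ:theta_+R_+}). The defect-group condition forces $Z(D)$ to be a Sylow $\ell$-subgroup of $C_G(D)$; combined with the analysis of centralizers of basic subgroups from \cite[Lemma 6.4]{Li19} (noted in \S\ref{subsec:radical} to apply also in characteristic $2$), this constraint is incompatible with any factor $R_{m,\alpha,\gamma,\bc}$ in which $\gamma>0$, since in the $\gamma>0$ case the Sylow $\ell$-subgroup of the centralizer strictly contains $Z(R_{m,\alpha,\gamma,\bc})$. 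Hence only $\gamma=0$ factors survive, and each surviving factor $R_{m,\alpha,0,\bc}$ is a defect group of a block only when $\bc = \bc_\beta = (1,\dots,1)$ (again by the argument of \cite[(5K)]{FS89}, which only uses the structure of centralizers of cyclic $\ell$-subgroups in the relevant $\GL$/$\GU$ groups, material that is characteristic-insensitive).

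The only step where one might worry about $q$ being even is the reduction via Brauer pairs to general linear/unitary subcenralizers, because some tools in \cite{FS89} (e.g.\ the use of the element $-1$ and the splitting of symbols) are specific to odd characteristic. However, for even $q$ the simplification observed in the remark after Proposition~\ref{prop:Rad(tG)} actually makes the situation easier: there is no $-1 \neq 1$ complication, and the normalizer structures from \cite[Lemma 6.6]{Li19} reduce as stated. So no essential obstacle arises, and the classification follows.
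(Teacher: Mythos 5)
Your high-level plan --- reduce to the preceding radical-subgroup classification, then invoke the argument of \cite[(5K)]{FS89} and check nothing breaks in even characteristic --- is exactly the paper's proof, which is the single sentence ``The argument in \cite[(5K)]{FS89} applies.''

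However, the intermediate mechanism you supply to explain \emph{why} $\gamma_i=0$ and $\bc_i=\bc_{\beta_i}$ are forced is not the one in (5K) and does not hold up as stated. You assert that the defect-group condition forces $Z(D)$ to be a Sylow $\ell$-subgroup of $C_G(D)$, and that this rules out a factor $R_{m,\alpha,\gamma,\bc}$ with $\gamma>0$ because the Sylow $\ell$-subgroup of its centralizer strictly contains $Z(R_{m,\alpha,\gamma,\bc})$. But this is not a correct characterization of defect groups, and the strict containment you cite is not special to $\gamma>0$: already for $\gamma=0$ and $m\geq\ell$ the centralizer of $R_{m,\alpha,0,\bc}$ is essentially a $\GL_m$ over a larger field, whose Sylow $\ell$-subgroup strictly contains the cyclic center $Z(R_{m,\alpha,0,\bc})$. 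So the condition you invoke would rule out those factors too, which cannot be right. The actual argument in (5K) is via Brauer-pair inclusion: given a $B$-Brauer pair $(R,\theta)$, one constructs a larger pair $(D,\theta_D)$ by passing to the $\ell$-adic decomposition of the multiplicities $m_\Gamma$ and regrouping the basic factors (this construction is reproduced in the paper's \S\ref{subsec:Brauerpair} and Proposition \ref{prop:Brauerpair}), and then shows $(D,\theta_D)$ is maximal --- it is the very shape of this construction, not a Sylow-in-centralizer criterion, that produces $\gamma=0$ and $\bc=\bc_\beta$. Your final observation, that even $q$ only \emph{simplifies} (no $-1\neq 1$ issues, no degenerate symbols), is the genuinely important point making the citation legitimate, and is correct; note though that the remark you mean to cite is the one at the end of \S\ref{subsec:radical}, not one following Proposition~\ref{prop:Rad(tG)}, which lies inside the $q$-odd part of the paper.
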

\begin{proof}
The argument in \cite[(5K)]{FS89} applies.
\end{proof}

All the relevant constructions and notation in \S\ref{subsec:Brauerpair} apply to the case in this section.
In particular, let $(R,\varphi)$ be a weight of $G$ and $\varphi$ lies over $\theta\in\dz(C/Z(R))$, then we can start with a Brauer pair $(R,\theta)$ and construct $(D',\theta')$ and $(D,\theta_D)$.

\begin{lem}
$(R,\theta) \leq (D,\theta_D)$ as Brauer pairs in $G$.
$(D,\theta_D)$ is a maximal Brauer pair and all maximal Brauer pair of $G$ are of this form.
\end{lem}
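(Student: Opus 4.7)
The plan is to obtain this lemma as a direct transfer of parts~(1) and~(4) of Proposition~\ref{prop:Brauerpair} from odd $q$ to $q=2^f$, verifying that each step of the original proof survives the characteristic change. The key observation is that the proof of Proposition~\ref{prop:Brauerpair} is built from three ingredients, all of which are available in the even case from the material already recalled in \S\ref{subsec:Brauerpair} and from the preceding two lemmas: the decomposition of defect groups as $D=D_0\times D_1\times\cdots\times D_u$ with $D_i=D_{m_i,\alpha_i,\beta_i}$, the centralizer structure of basic subgroups, and the formalism of Brauer pairs.

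For the statements that $(D,\theta_D)$ is a maximal Brauer pair and that every maximal Brauer pair of $G$ arises this way, I would invoke \cite[(8A)]{FS89}. The argument there rests only on the decomposition of defect groups (just established for $q=2^f$), on the centralizer $C_G(D)$ computed from the constructions of \S\ref{subsec:Brauerpair} which are valid for any parity of $q$, and on standard Brauer-pair formalism. None of these steps uses $p$ odd.

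For the inclusion $(R,\theta)\leq(D,\theta_D)$, I would reduce exactly as in Proposition~\ref{prop:Brauerpair} to proving $(R_+,\theta_+)\leq(D_+,\theta_{D,+})$ as Brauer pairs inside $G_+=\I_0(V_+)$. Following \cite[p.178]{FS89}, I choose a primary element $z_+\in Z(D_+)$ of order $\ell$ with $[z_+,V_+]=V_+$; by the construction passing from $R$ to $D$ one has $z_+\in Z(R_+)$, so both $R_+C_{G_+}(R_+)$ and $D_+C_{G_+}(D_+)$ are contained in $C_{G_+}(z_+)$. Since $\ell$ is odd, the latter centralizer is isomorphic to a general linear or unitary group over $\F_q$ according as $\ell$ is linear or unitary. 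Inside this $\GL$ or $\GU$ the comparison of Brauer pairs is given by \cite{Brou86} and \cite[p.18]{An94}, and the inclusion then transfers to $G_+$ and hence to $G$.

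The main potential obstacle is simply the verification that the primary element $z_+$ exists and that $C_{G_+}(z_+)$ remains a $\GL$ or $\GU$ when $q$ is even; both depend only on the description of basic subgroups recalled in \S\ref{subsec:radical}, which is literally the same for even and odd $q$ (with only the normalizer structure being slightly simpler in the even case, as noted in the remark following Proposition~\ref{prop:Rad(tG)}). Hence no genuinely new input is required, and the proof is a line-for-line repetition of the odd-$q$ argument with the relevant results quoted from \cite{FS89,An94,Brou86}.
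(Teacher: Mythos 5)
Your proposal is correct and follows essentially the same route as the paper, which disposes of this lemma with the single sentence ``The argument of Proposition~\ref{prop:Brauerpair} applies.'' You have simply unpacked that remark, tracing parts~(1) and~(4) of Proposition~\ref{prop:Brauerpair} through \cite[(8A), p.178]{FS89}, \cite{Brou86} and \cite[p.18]{An94} and confirming that none of the ingredients (defect-group decomposition, centralizer structure, the primary element $z_+$) uses $p$ odd.
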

\begin{proof}
The argument of Proposition \ref{prop:Brauerpair} applies.
\end{proof}
With this lemma, we can determine which block the weight $(R,\varphi)$ belongs to as we have done for $q$ odd.

Let $(D,\theta_D)$ be a maximal Brauer pair for the block $B$ of $G$.
Let $(s,\kappa)$ be defined in the same way as in \cite[\S10]{FS89}, then $(s,\kappa)$ is a label of $B$; such labelling gives a bijection as in \cite[(10B)]{FS89}.
The dual defect group of $B$ is defined as the image of $D$ under the isomorphism $G \cong G^*$.

\begin{lem}
Let $B$ be the block of $G$ with label $(s,\kappa)$.
Then for any $\chi\in\Irr(G)$, $\chi\in\Irr(B)$ if and only if $\chi=\chi_{t,\lambda}$ and (1) $t_{\ell'}$ is conjugate to $s$; (2) $t_\ell$ is contained in a dual defect group of $B$; (3) $\kappa$ is the core of $\lambda$.
\end{lem}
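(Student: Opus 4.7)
The plan is to parallel the Fong--Srinivasan block description for odd $q$ (\cite[(12A)--(12B)]{FS89}), verifying that each ingredient survives in characteristic $2$. All the preparatory structural results have been re-established in this section: the classification of radical subgroups, of defect groups of the form $D_0 \times D_1 \times \cdots \times D_u$, the Brauer pair containment $(R,\theta) \leq (D,\theta_D)$, and the bijection $B \leftrightarrow (s,\kappa)$ via $(D,\theta_D)$. What remains is to identify, for a given character $\chi_{t,\lambda}$, exactly which block it belongs to in terms of these labels.

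First I would fix $\chi_{t,\lambda} \in \Irr(G)$ and examine its block $B_0$ with label $(s_0,\kappa_0)$. The Jordan decomposition and the results of Brou\'e--Michel imply that $\cE(G,\ell' \textrm{-part of }t)$ is a union of blocks, forcing $s_0$ to be $G^*$-conjugate to $t_{\ell'}$; this gives condition (1). Next, writing $t = t_{\ell'} t_\ell$ with $t_\ell \in C_{G^*}(t_{\ell'})$, the structure of the defect group $D = D_0 \times \cdots \times D_u$ of $B_0$ (analogue of \cite[(5K)]{FS89}) together with the duality $G \cong G^*$ forces $t_\ell$ to lie in a $G^*$-conjugate of the image of $D$, giving condition (2). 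Finally, the canonical character $\theta_D$ attached to the maximal Brauer pair, when decomposed into its $\Gamma$-components as in \S\ref{subsec:Brauerpair}, determines $\kappa_0 = \prod_\Gamma \kappa_{0,\Gamma}$ as the $e_\Gamma$-core of $\lambda_\Gamma$ (for the even analogue of $e_\Gamma$); this is exactly condition (3).

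For the converse, given $(t,\lambda)$ satisfying (1)--(3), I would reverse the construction: build $D' = D_0' \times D_+'$ and the canonical character $\theta'$ as in \S\ref{subsec:Brauerpair} starting from the cuspidal part of $\cJ_t(\chi_{t,\lambda})$, and check this realises the label $(s,\kappa)$ of $B$. Combining with the established bijection between blocks and labels from the preceding lemmas, $\chi_{t,\lambda} \in \Irr(B)$ follows. A simple cardinality argument (each $\chi_{t,\lambda}$ lies in a unique block, and the conditions (1)--(3) partition $\Irr(G)$ according to the labels) then upgrades the implication to an equivalence.

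The only potentially delicate point is the tracking of Lusztig symbols and their cores through the identification of $\theta_D$ with a product of cuspidal pieces $\theta_\Gamma$. In the odd-$q$ case, the component $\Gamma = X+1 \in \cF_0$ forces one to split off degenerate symbols and introduce the auxiliary label $\cK$ with $|\cK| \in \{1,2\}$, which is the source of the six cases in Table~\ref{tab:block}. For $q = 2^f$, however, $\cF_0 = \{X-1\}$ by our redefinition at the start of \S\ref{sect:gLt}, so this degeneracy issue disappears entirely and the bijection $B \leftrightarrow (s,\kappa)$ in \cite[(10B)]{FS89} simplifies uniformly. Thus the main obstacle of the odd case dissolves, and the proof is a straightforward transcription of \cite[(12A)]{FS89} with the simpler $\cF_0$ of characteristic~$2$.
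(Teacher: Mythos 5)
Your proposal takes essentially the same approach as the paper, which simply states that ``the arguments of \cite[(12A)]{FS89} apply'' and clarifies the meaning of ``$\kappa$ is the core of $\lambda$.'' What you have written out is a sketch of the internal logic of FS89~(12A), together with the correct observation that for $q=2^f$ the single polynomial $X-1\in\cF_0$ contributes a symplectic (odd-defect, hence never degenerate) symbol component, so the degenerate-symbol bookkeeping that produces cases (II)--(VI) for odd $q$ never arises; this is precisely why the paper can dispose of the lemma with a one-line citation.
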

\begin{proof}
The arguments of \cite[(12A)]{FS89} apply.
Here, note that $\kappa$ is the core of $\lambda$ means $\kappa_\Gamma$ is the $e_\Gamma$-core of $\lambda_\Gamma$.
\end{proof}

\begin{cor}
$\IBr(B_{s,\kappa})$ can be labelled by $\phi_{s,\lambda}$, where $\kappa$ is the core of $\lambda$.
\end{cor}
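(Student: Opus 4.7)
The plan is to transfer the block description from ordinary characters (given by the preceding lemma) to Brauer characters via a basic set argument, exactly as was done in the odd $q$ case in Lemma~\ref{lem:IBr}. Concretely, I would appeal to Geck's theorem, which says that $\cE(G,\ell')$ is a basic set for $\IBr(G)$; this yields a bijection
\[
\cE(G,\ell') \longleftrightarrow \IBr(G), \qquad \chi_{s,\lambda} \longmapsto \phi_{s,\lambda},
\]
where $s$ runs over a $G^*$-transversal of semisimple $\ell'$-elements and $\lambda$ runs over the corresponding parametrizing set. This bijection is blockwise in the sense that $\phi_{s,\lambda}\in\IBr(B)$ if and only if $\chi_{s,\lambda}\in\Irr(B)$, because the decomposition map preserves blocks.

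With this in hand, the corollary reduces to identifying which $\chi_{s,\lambda}$ in $\cE(G,\ell')$ belong to the block $B_{s,\kappa}$. Here is where the preceding lemma is applied: a character $\chi_{t,\lambda}$ lies in $\Irr(B_{s,\kappa})$ iff $t_{\ell'}$ is $G^*$-conjugate to $s$, $t_\ell$ lies in a dual defect group of $B_{s,\kappa}$, and $\kappa$ is the core of $\lambda$. For $\chi_{s,\lambda}\in\cE(G,\ell')$ the semisimple label is already an $\ell'$-element, so $t=s=t_{\ell'}$ (upon choosing the fixed transversal representative) and $t_\ell=1$, which automatically lies in any dual defect group. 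Thus only condition (3), namely that $\kappa$ is the core of $\lambda$, remains, and this yields the desired parametrization $\IBr(B_{s,\kappa})=\{\phi_{s,\lambda}\mid \kappa \text{ is the core of }\lambda\}$.

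The main (mild) obstacle is verifying that Geck's basic set result applies in the present setting of $\Sp_{2n}(2^f)$ with $\ell$ odd, so that the labelling $\phi_{s,\lambda}$ is well defined and the blockwise property of the bijection holds; this is covered by \cite{Ge93} as already invoked several times in the paper. Once this is granted, the argument is essentially a direct translation of Lemma~\ref{lem:IBr} to the characteristic~$2$ situation, noting only that the preceding lemma already absorbs all the block-theoretic content through conditions (1)--(3) and that for $\ell'$-labels these conditions collapse to the single requirement on cores.
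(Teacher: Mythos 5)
Your proposal is correct and takes essentially the same approach as the paper: the corollary is an immediate consequence of the preceding lemma combined with Geck's basic set result \cite{Ge93}, parallel to Lemma~\ref{lem:IBr} in the odd-$q$ case. You correctly identify the crux — for a semisimple $\ell'$-label $s$, conditions (1) and (2) of the preceding lemma are automatic, so membership of $\chi_{s,\lambda}\in\cE(G,\ell')$ in $\Irr(B_{s,\kappa})$ collapses to the condition that $\kappa$ is the core of $\lambda$.
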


\begin{rem}
As an example, see \cite{Whi95} and \cite{Whi00} for irreducible ordinary characters in unipotent blocks of $\Sp_4(2^f)$ and $\Sp_6(2^f)$, and see \cite{SF13} for irreducible ordinary characters in non-unipotent blocks of $\Sp_6(2^f)$.
\end{rem}

The parametrization of weights is the same as in \cite{An94}; see \cite[\S6.B]{Li19} for a description of the construction of weights.
In the above constructions, the duals of semisimple elements are used; see on \cite[p.18]{An94} for the definition of dual for the case $q$ odd.
In the case $q$ even, we define the dual of semisimple elements via the isomorphism $G \cong G^*$.
We state the result as follows.

\begin{lem}[{\cite[4F]{An94}}]
Let $B=B_{s,\kappa}$ be a block of $G$.
Assume $s= s_0 \times s_+$ as in \cite[\S10]{FS89}.
Then $m_\Gamma(s)-m_\Gamma(s_0)=w_\Gamma\beta_\Gamma e_\Gamma$ for some natural number $w_\Gamma$.
Set
\begin{equation*}
i\cW(B)=\left\{
Q=\prod_\Gamma Q_\Gamma ~\middle|~
\begin{array}{c}
Q_\Gamma=\left(Q_\Gamma^{(1)},Q_\Gamma^{(2)},\dots,Q_\Gamma^{(\beta_\Gamma e_\Gamma)}\right),\\
\textrm{$Q_\Gamma^{(j)}$'s are partitions},\sum\limits_{j=1}^{\beta_\Gamma e_\Gamma} |Q_\Gamma^{(j)}|=w_\Gamma.
\end{array}
\right\}
\end{equation*}
Here $Q_\Gamma$ is an ordered sequence of $\beta_\Gamma e_\Gamma$ partitions.
Then there is a bijection between $i\cW(B)$ and $\Alp(B)$.
\end{lem}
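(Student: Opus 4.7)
The plan is to mirror the construction used to prove Proposition \ref{prop:weights} in the odd-$q$ setting, with substantial simplifications that arise because for $q=2^f$ we have $G\cong G^*$ and there is no conformal enlargement to worry about. First I would start with a weight $(R,\varphi)\in\Alp(B)$ and use the classification of radical subgroups to write $R = R_0 \times R_+$ with $R_+ = \prod_{\Gamma,\delta,i} R_{\Gamma,\delta,i}^{t_{\Gamma,\delta,i}}$. By Clifford theory $\varphi = \Ind_{N(\theta)}^N \psi$ for some $\theta\in\dz(C/Z(R))$ below $\varphi$ and some $\psi \in \dz(N(\theta)/R\mid\theta)$; decompose $\theta = \theta_0\times\theta_+$ with $\theta_+$ normalised in the sense of Remark \ref{rem:cc-cano-chars}, and correspondingly $\psi = \psi_0\times\psi_+$ where $\psi_0 \in \dz(\I_0(V_0))$ and $\psi_+\in\dz(N_+(\theta_+)/R_+\mid\theta_+)$.

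Next, I would attach combinatorial data to $\psi_+$ via Equation (\ref{equ:psi}), producing a label $K = \prod_\Gamma K_\Gamma$ that assigns to each character $\psi_{\Gamma,\delta,i,j}\in\sC_{\Gamma,\delta}$ an $\ell$-core partition $\kappa_{\Gamma,\delta,i,j}$. By the previous corollary, membership of $\varphi$ in $B = B_{s,\kappa}$ is controlled precisely by the cores and by the constraint $\sum_{\delta,i,j}\ell^\delta |\kappa_{\Gamma,\delta,i,j}| = w_\Gamma$. This singles out the set $i\cW^0(B)$ analogous to $i\cW^0_{\ts,\cK}$ from \S\ref{subsec:weight}, and gives a bijection between $\Alp(B)$ and $i\cW^0(B)$. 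No pairing, no index $i\in\Z/2\Z$, and no $\{K,K'\}$ appears, because the splittings that produced cases (II)--(VI) in the conformal case were all driven by $\omega_g$ and the degeneracy of Lusztig symbols in component $X+1$; in the present set-up $X+1$ is just an ordinary polynomial in $\cF_1$ and the analogue of Proposition \ref{prop:N(theta+):N(ttheta)} degenerates to $N(\theta_+) = N(\theta)$ for every $\theta$.

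Finally, the translation from the labels $K$ to the sequences $Q$ of partitions is the content of \cite[(1A)]{AF90}: since $|\sC_{\Gamma,\delta}| = \beta_\Gamma e_\Gamma \ell^\delta$ (by the arguments of \cite[(4A)]{An94}, which apply verbatim in even characteristic), assigning an $\ell$-core to each of the $\beta_\Gamma e_\Gamma \ell^\delta$ choices in $\sC_{\Gamma,\delta}$ with total $\ell$-weight $w_\Gamma$ is equivalent to choosing an ordered $\beta_\Gamma e_\Gamma$-tuple of partitions whose sizes sum to $w_\Gamma$. Composing these two bijections gives $\Alp(B) \leftrightarrow i\cW(B)$.

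The main step to check, and the only point where one must be careful, is that the Brauer pair analysis of \S\ref{subsec:Brauerpair} and the canonical character formalism survive in characteristic~$2$. For this I would appeal to the remark of \S\ref{subsec:radical}, which records that the centraliser/normaliser structure of \cite[Lemma 6.6]{Li19} is in fact slightly simpler when $q$ is even, and to the fact that only $\ell'$-semisimple elements are used throughout; the proofs of Proposition \ref{prop:Brauerpair} and of the preceding lemmas therefore transfer without essential change, at which point the bijection above becomes straightforward.
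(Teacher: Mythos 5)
The paper itself gives no proof of this lemma: it simply cites \cite[4F]{An94} (which is stated there for odd $q$) together with the blanket remark at the head of \S\ref{sec:main-3} that the relevant arguments of \cite{FS89,An94,Li19} apply. Your proposal therefore supplies details the paper omits, and the overall strategy — Clifford-theoretic decomposition of $\varphi$, combinatorial labelling of $\psi_+$ via Equation (\ref{equ:psi}) and $\sC_{\Gamma,\delta}$, then the translation to sequences of partitions via \cite[(1A)]{AF90} — is indeed the one that underlies both \cite{An94} and \cite[\S6.B]{Li19}, and it is sound.

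One inaccuracy worth flagging: you write that for $q=2^f$ the polynomial $X+1$ ``is just an ordinary polynomial in $\cF_1$.'' That is not right. In characteristic $2$ one has $X+1=X-1$, and by the paper's own convention for even $q$ (\S\ref{sect:gLt}) $\cF_0=\set{X-1}$; so $X+1$ does not reappear in $\cF_1$, it is \emph{identified} with the distinguished polynomial $X-1$. Your conclusion (no $\set{K,K'}$ pairing, no index $i\in\Z/2\Z$) is nevertheless correct, but for a different reason than you give: the $\Gamma=X-1$ component of $C_{G^*}(s)\cong C_G(s)$ is again a symplectic group, whose unipotent characters and $e_\Gamma$-cores are labelled by Lusztig symbols of \emph{odd} defect, which are never degenerate. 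This is exactly the fact invoked by the paper in the proof of the subsequent lemma (``noting that $\lambda_{X-1}$ and $\kappa_{X-1}$ are both non-degenerate Lusztig symbols''). The disappearance of $\omega_g$ is, as you say, simply because there is no conformal enlargement. Finally, your argument does not explicitly address the preliminary assertion of the lemma, namely that $\beta_\Gamma e_\Gamma$ divides $m_\Gamma(s)-m_\Gamma(s_0)$; this is an easy byproduct of the Brauer-pair construction in \S\ref{subsec:Brauerpair} (the $s_+$-part of the label is produced by the $\theta_\Gamma$-components of $R_+$, each of which contributes $\beta_\Gamma e_\Gamma\ell^{\delta}$ to $m_\Gamma$), but you should say so rather than leave it implicit.
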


The conjugacy class of weights in $B_{s,\kappa}$ corresponding to $Q$ will be denoted as $w_{s,\kappa,Q}$.

\begin{lem}
Let $B=B_{s,\kappa}$ be a block of $G$.
Then there is a bijection
\[\IBr(B) \to \Alp(B), \quad \phi_{s,\lambda} \mapsto w_{s,\kappa,Q},\]
where $\lambda_\Gamma=\kappa_\Gamma*Q_\Gamma$.
\end{lem}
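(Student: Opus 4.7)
The plan is to verify the asserted bijection componentwise over the polynomials $\Gamma \in \cF$, reducing it to the purely combinatorial core/quotient correspondence for partitions and Lusztig symbols recalled in \S\ref{subsec:partition-symbol}. By the penultimate corollary, $\IBr(B_{s,\kappa})$ is in natural bijection with the set of products $\lambda = \prod_\Gamma \lambda_\Gamma$ such that $\kappa_\Gamma$ is the $e_\Gamma$-core of $\lambda_\Gamma$ for every $\Gamma$. By the last lemma, $\Alp(B_{s,\kappa})$ is in natural bijection with $i\cW(B)$, which itself factors as a product over $\Gamma$ of ordered sequences of $\beta_\Gamma e_\Gamma$ partitions of total size $w_\Gamma$. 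So it suffices to produce, for each $\Gamma$, a bijection between partitions (resp.\ Lusztig symbols) with $e_\Gamma$-core $\kappa_\Gamma$ and the appropriate ordered sequences, compatible with the notation $\lambda_\Gamma = \kappa_\Gamma * Q_\Gamma$.

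First I would handle $\Gamma \notin \cF_0$: here $\lambda_\Gamma$ is an ordinary partition, $\beta_\Gamma = 1$, and the classical $e_\Gamma$-core/$e_\Gamma$-quotient decomposition of \S\ref{subsec:partition-symbol} gives a unique $Q_\Gamma = (Q_\Gamma^{(1)},\dots,Q_\Gamma^{(e_\Gamma)})$ with $\sum_j |Q_\Gamma^{(j)}| = w_\Gamma$, and writes $\lambda_\Gamma = \kappa_\Gamma * Q_\Gamma$. Next I would treat $\Gamma = X-1$, the unique element of $\cF_0$ in the even-$q$ setting, where $\lambda_\Gamma$ is a Lusztig symbol, $\beta_\Gamma = 2$, and we must match against ordered $2 e_\Gamma$-tuples. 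The combinatorial discussion in \S\ref{subsec:partition-symbol} parametrizes Lusztig symbols with non-degenerate core and non-degenerate quotient by ordered quotients of length $2e_\Gamma$ (two symbols per unordered quotient), while degenerate cores or degenerate quotients yield exactly one symbol; for symplectic groups, following the convention of \cite{Li19}, degenerate symbols are counted once, and the resulting parametrization of $\lambda_\Gamma$ by $(\kappa_\Gamma, Q_\Gamma)$ with $Q_\Gamma$ an ordered $2 e_\Gamma$-tuple is exactly what $i\cW(B)$ prescribes.

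Finally I would assemble the componentwise bijections into the global map $\phi_{s,\lambda} \mapsto w_{s,\kappa,Q}$, with $\lambda_\Gamma = \kappa_\Gamma * Q_\Gamma$ for every $\Gamma$. The constraint $\sum_j |Q_\Gamma^{(j)}| = w_\Gamma$ matches the identity $m_\Gamma(s) - m_\Gamma(s_0) = w_\Gamma \beta_\Gamma e_\Gamma$ appearing in the definition of $i\cW(B)$, so the map is well-defined and bijective.

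The main potential obstacle is the bookkeeping at $\Gamma = X-1$ for degenerate Lusztig symbols or degenerate quotients, where one must make sure the counts agree with the ``counted once'' convention for symplectic groups; but this is exactly the content of \S\ref{subsec:partition-symbol} and so the argument is essentially a reference to that subsection, paralleling the analogous assertion in the odd-$q$ case (Theorem \ref{thm:bijection}).
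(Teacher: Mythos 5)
Your proposal correctly identifies the strategy (reduce componentwise to the core/quotient combinatorics of \S\ref{subsec:partition-symbol}) and handles $\Gamma \notin \cF_0$ properly, but it has a real gap at $\Gamma = X-1$, precisely the place you flag as ``the main potential obstacle.''

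You assert that degenerate cores or degenerate quotients ``yield exactly one symbol'' and that, combined with the ``counted once'' convention for symplectic groups, this makes the parametrization by ordered $2e_\Gamma$-tuples match $i\cW(B)$. That is false in the problematic subcase $\kappa_{X-1}$ degenerate, quotient $Q$ non-degenerate: as recalled in \S\ref{subsec:partition-symbol}, there is then a unique Lusztig symbol $\lambda$ with that core and quotient, and it is \emph{non}-degenerate (so ``counted once'' does not help), yet there are two distinct ordered quotients $Q_0 \neq Q_0'$. The map from ordered sequences to symbols would therefore be $2$-to-$1$, not a bijection. Your assembly step would then fail to be injective.

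What rescues the lemma --- and what the paper's proof hinges on entirely --- is the observation that this subcase never occurs: $\lambda_{X-1}$ and $\kappa_{X-1}$ are \emph{always non-degenerate} Lusztig symbols here. (For $\Sp_{2n}(q)$ the symbols parametrizing unipotent characters have odd defect, and taking an $e$-core preserves the defect parity, so both $\lambda_{X-1}$ and its $e_{X-1}$-core $\kappa_{X-1}$ have odd defect and hence cannot be degenerate.) Once both core and symbol are non-degenerate, the core/quotient correspondence of \S\ref{subsec:partition-symbol} gives a clean bijection between ordered quotients $Q_0$ and symbols $\lambda_{X-1} = \kappa_{X-1} * Q_0$ with the given core, which is exactly what $i\cW(B)$ prescribes. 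Your argument needs this non-degeneracy statement to be made explicit; without it the proof does not go through.
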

\begin{proof}
Noting that $\lambda_{X-1}$ and $\kappa_{X-1}$ are both non-degenerate Lusztig symbols, while $\lambda_\Gamma$ and $\kappa_\Gamma$ for $\Gamma\neq X-1$ are partitions, the result follows from \S\ref{subsec:partition-symbol}.
\end{proof}

\begin{lem}
Assume the sub-matrix of the decomposition matrix of $\Sp_{2n}(2^f)$ to $\cE(\Sp_{2n}(2^f),\ell')$ is unitriangular.
Then the above bijection is $\Aut(G)$-equivariant.
\end{lem}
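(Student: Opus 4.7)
The plan is to follow the strategy of Lemma 4.1, adapting it to the even~$q$ setting where several complications disappear: since $G\cong G^*$, there is no conformal overgroup to accommodate, the only outer automorphisms are field automorphisms (because $(n,f)\neq(2,1),(3,1)$), and degenerate Lusztig symbols are counted once as in \cite{Li19} rather than twice. In particular it suffices to fix a field automorphism $\sigma\in E=\group{F_p}$ and verify that the bijection $\phi_{s,\lambda}\mapsto w_{s,\kappa,Q}$ commutes with the $\sigma$-action on both sides.

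First I would handle the Brauer-character side. By the Jordan decomposition of characters together with \cite[Theorem~3.1]{CS13}, the action of~$\sigma$ on the ordinary characters is given by $\chi_{s,\lambda}^\sigma=\chi_{\sigma^*(s),\sigma^*(\lambda)}$, where $\sigma^*$ is the dual field automorphism on $G^*$ and $\sigma^*(\lambda)_{\sigma^*(\Gamma)}=\lambda_\Gamma$; this is analogous to the input used in Lemma 4.1 and works identically for even~$q$, since $\cE(G,\ell')$ is still a basic set (by \cite{Ge93} and the Geck--Hiss result extended to even~$q$) and decomposition is parametrised by Lusztig series in the same way. Granted unitriangularity of the decomposition matrix on $\cE(G,\ell')$, \cite[Lemma~7.5]{CS13} lets me pick the labels in the corollary above Lemma~4.1 so that $\phi_{s,\lambda}^\sigma=\phi_{\sigma^*(s),\sigma^*(\lambda)}$. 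Consequently the block labels transform by $B_{s,\kappa}^\sigma=B_{\sigma^*(s),\sigma^*(\kappa)}$, since the core~$\kappa$ is intrinsically determined by~$\lambda$.

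Next I would handle the weight side by tracing through the construction recalled before the statement. A weight $(R,\varphi)$ in $\cW(B_{s,\kappa,Q})$ is built from a Brauer pair $(R,\theta)$ with $\theta=\theta_0\times\theta_+$ and a character $\psi_{+,K}$ of $N_+(\theta_+)$ decomposed as in Equation~(\ref{equ:psi}). Since the twisted basic subgroups $R_{m,\alpha,\gamma,\bc}^{tw}$ are $\hE$-stable by \cite[Lemma~6.8]{Li19} (the argument there is characteristic-free and the cited Remark notes that the construction applies verbatim to even~$q$), the $\sigma$-action permutes the data $(R,\theta_+,K)$ in a controlled way, and \cite[Proposition~6.20]{Li19} gives $\psi_{+,K}^\sigma=\psi_{+,\sigma^*(K)}$ up to $N_+$-conjugacy. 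Combining this with the induction step $\varphi=\Ind_{N(\theta)}^{N(R)}(\theta_0\psi_{+,K})$ (no central product subtleties arise, because we work inside $G$ itself), one obtains $w_{s,\kappa,Q}^\sigma=w_{\sigma^*(s),\sigma^*(\kappa),\sigma^*(Q)}$.

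Finally I would conclude by observing that $e$-cores and $e$-quotients of partitions and non-degenerate Lusztig symbols are computed purely combinatorially from the $\beta$-sets, hence commute with the permutation action of $\sigma^*$ on the indexing polynomials~$\Gamma$: the equation $\lambda_\Gamma=\kappa_\Gamma*Q_\Gamma$ pulls back to $\sigma^*(\lambda)_{\sigma^*(\Gamma)}=\sigma^*(\kappa)_{\sigma^*(\Gamma)}*\sigma^*(Q)_{\sigma^*(\Gamma)}$. Thus the bijection intertwines the $\sigma$-actions on both sides, which is exactly $\Aut(G)$-equivariance. The only step that requires genuine (as opposed to bookkeeping) care is the control of $\psi_{+,K}^\sigma$, but since in even characteristic there are no degenerate pieces of type $X+1$ and therefore no analogue of Lemma~4.3 is needed, the proof is strictly shorter than in the odd case and reduces to invoking the ingredients from \cite{Li19} and \cite{CS13} already used above.
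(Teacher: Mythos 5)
Your overall strategy matches the paper's, but you give an incorrect justification for reducing to field automorphisms. You claim that the conditions $(n,f)\neq(2,1),(3,1)$ ensure that the only outer automorphisms are field automorphisms, but this is false: for $n=2$ (and any $f\geq2$), the group $\Sp_4(2^f)$ carries a graph automorphism coming from the identification $C_2=B_2$, so $\Out(G)$ is cyclic of order $2f$, strictly containing the group $E=\group{F_p}$ of field automorphisms. The conditions $(n,f)\neq(2,1),(3,1)$ are there to ensure that $G$ is simple and equal to its own universal covering group, not to eliminate the graph automorphism. Consequently your proof, as written, does not establish equivariance under the graph automorphism when $n=2$, and hence does not prove the lemma in its stated generality. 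The paper handles this correctly by first invoking the already-verified cases $n=1$ (from \cite{Sp13}) and $n=2$ (from \cite{SF14}), which allows it to assume $n\geq3$, and for $n\geq3$ the Dynkin diagram of type $C_n$ admits no graph automorphism, so restricting to $\sigma\in E$ is then legitimate.

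Apart from this point, the remainder of your argument — applying \cite[Theorem~3.1]{CS13} together with unitriangularity and \cite[Lemma~7.5]{CS13} on the Brauer-character side, the $\hE$-stability of twisted basic subgroups from \cite[Lemma~6.8]{Li19} and the computation $\psi_{+,K}^\sigma=\psi_{+,\sigma^*(K)}$ from \cite[Proposition~6.20]{Li19} on the weight side, and the purely combinatorial compatibility of cores/quotients with the permutation $\sigma^*$ of the polynomials $\Gamma$ — is essentially the same as the paper's proof (which cites \cite[Proposition~5.6]{Li19} and \cite[\S6.C]{Li19} for the same ingredients), and it is correct once the reduction to $n\geq3$ has been justified properly.
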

\begin{proof}
Since the cases $n=1$ and $n=2$ have been considered in \cite{Sp13} and \cite{SF14} respectively,
we may assume $n\geq3$, thus it suffices to consider $\sigma\in E$.
For $\chi_{s,\lambda}\in\Irr(B)$, $\chi_{s,\lambda}^\sigma = \chi_{\sigma^*(s),\sigma^*(\lambda)}$ by \cite[Theorem 3.1]{CS13} and similar arguments in \cite[Proposition 5.6]{Li19}.
Then by the assumption on the decomposition matrix, $\phi_{s,\lambda}^\sigma = \phi_{\sigma^*(s),\sigma^*(\lambda)}$.
For the action of $\sigma$ on weights, the arguments in \cite[\S6.C]{Li19} applies.
Then it is easy to see the above bijection is equivariant.
\end{proof}

Let $\tG=G$ and $\cB=\set{B}$, then the other requirements in Theorem \ref{thm:criterion} are obviously satisfied.
This completes the proof of Theorem \ref{mainthm-3}.

\section*{Acknowledgements}
The author is extremely grateful to Julian Brough, Marc Cabanes and Britta Sp\"ath for their hospitality during his visit at Wuppertal in May 2019, and especially for the fruitful discussions and keen suggestions, which made this paper possible.
The author acknowledges the great working atmosphere during the stay of a month in the summer of 2019 at Sustech International Center for Mathematics, where the main part of this paper was written.

\end{document}